\newtheorem{theorem}{Theorem}[section]
\newtheorem{proposition}[theorem]{Proposition}
\newtheorem{lemma}[theorem]{Lemma}
\newtheorem{corollary}[theorem]{Corollary}
\newcommand{\elemsk}[2]{[#2]_{#1}}
\newcommand{\baseM}[1]{\mathcal{M}_{#1}}
\newcommand{\baseN}[1]{\mathcal{N}_{#1}}
\newcommand{\PP}[2]{\mathcal{P}(#1, #2)}
\newcommand{\PPd}[3]{\mathcal{P}(#1, #2)_{#3}}
\newcommand{\id}[1]{\mathrm{id}_{#1}}
\newcommand{\dclass}[1]{\mathcal{D}_{#1}}
\newcommand{\DD}{\mathcal{D}}
\newcommand{\D}[1]{#1/\DD}
\newcommand{\LL}{\mathcal{L}}
\newcommand{\RR}{\mathcal{R}}
\newcommand{\Drel}{\mathbin{\DD}}
\newcommand{\Rrel}{\mathbin{\RR}}
\newcommand{\Lrel}{\mathbin{\LL}}
\newcommand{\im}[1]{\mathrm{im}(#1)}
\newcommand{\thetaeq}{\mathbin{\theta}}
\newcommand{\set}[1]{\{#1\}}
\newcommand{\such}{\mid}
\newcommand{\Sk}[1]{\mathsf{Sk}(#1)}
\newcommand{\St}[1]{\mathsf{St}(#1)}
\newcommand{\up}[1]{{#1}_\sharp}
\newcommand{\down}[1]{{#1}_\flat}
\newcommand{\homo}[1]{{#1}_\natural}
\newcommand{\dom}[1]{\mathrm{dom}(#1)}
\newcommand{\restricted}[2]{#1{\mid}_{#2}}
\newcommand{\leqideal}[1]{\langle #1 \rangle_{\leq}}
\newcommand{\sand}{\curlywedge}
\newcommand{\sor}{\curlyvee}
\newenvironment{proof}{\medskip\emph{Proof.}}{\hfill$\Box$\medskip}
\newcommand{\three}{\mathsf{3}}
\newcommand{\two}{\mathsf{2}}
\newcommand{\one}{\mathsf{1}}
\newcommand{\GBA}{\mathsf{GBA}}
\newcommand{\BA}{\mathsf{BA}}
\newcommand{\op}[1]{#1^{\mathrm{op}}}
\newcommand{\CatA}{\mathsf{SkAlg}} % Category of algebras
\newcommand{\CatRA}{\mathsf{SkAlg_R}} % Category of right-handed algebras
\newcommand{\CatS}{\mathsf{SkSp}} % Category of skew Boolean spaces
\newcommand{\CatRS}{\mathsf{SkRSp}} % Category of skew rectangular Boolean Spaces
\newcommand{\FunS}{\mathcal{S}} % CatA^op -> CatS
\newcommand{\FunA}{\mathcal{A}} % CatS^op -> CatA
\newcommand{\parto}{\rightharpoonup}
\newcommand{\pbcorner}[1][dr]{\save*!/#1-1.2pc/#1:(-1,1)@^{|-}\restore}
\begin{document}
\title{Stone Duality for\\ Skew Boolean Algebras with Intersections}
\author{Andrej Bauer\\
\small Faculty of Mathematics and Physics\\
\small University of Ljubljana\\
\small \texttt{Andrej.Bauer@andrej.com}
\and
Karin Cvetko-Vah\\
\small Faculty of Mathematics and Physics\\
\small University of Ljubljana\\
\small \texttt{Karin.Cvetko@fmf.uni-lj.si}
}
\maketitle

\begin{abstract}
  We extend Stone duality between generalized Boolean algebras and Boolean
  spaces, which are the zero-dimensional locally-compact Hausdorff spaces, to a
  non-commutative setting. We first show that the category of right-handed skew
  Boolean algebras with intersections is dual to the category of surjective
  étale maps between Boolean spaces. We then extend the duality to skew Boolean
  algebras with intersections, and consider several variations in which the
  morphisms are restricted. Finally, we use the duality to construct a
  right-handed skew Boolean algebra without a lattice section.
\end{abstract}

\section{Introduction}
\label{sec:introduction}

The fundamental example of the kind of duality we are interested in was
established by Marshall Stone~\cite{stone36:_boolean,stone37:_applic_boolean}:
every Boolean algebra corresponds to a zero-dimensional compact Hausdorff space,
or a \emph{Stone space} for short, as well as to a \emph{Boolean ring}, which is
a commutative ring of idempotents with a unit. In modern language the duality is
stated as equivalence of categories of Boolean algebras, Boolean rings, and
Stone spaces, where the later equivalence is contravariant. The duality has many
generalizations, see~\cite{johnstone82:_stone}. Already in Stone's second
paper~\cite[Theorem~8]{stone37:_applic_boolean} we find an extension of duality
to \emph{Boolean spaces}, which are the zero-dimensional \emph{locally} compact
Hausdorff spaces. They correspond to commutative rings of idempotents, possibly
without a unit, or equivalently to \emph{generalized Boolean algebras}, which
are like Boolean algebras without a top element.

Our contribution to the topic is a study of the \emph{non-commutative} case.
Among several variations of non-commutative Boolean algebras we are able to
provide duality for \emph{skew Boolean algebras with intersections} (which we
call \emph{skew algebras}) because they have a well-behaved theory of ideals.

The paper is organized as follows. In Section~\ref{sec:preliminaries} we recall
the necessary background material about skew Boolean algebras, Boolean spaces,
and étale maps. In Section~\ref{sec:duality-commutative} we spell out the
well-known Stone duality for commtutative algebras. In
Section~\ref{sec:duality-skew} we establish the duality between right-handed
skew algebras and skew Boolean spaces, which we then extend to the duality
between skew algebras and rectangular skew Boolean spaces. In
Section~\ref{sec:variations} we further analyze the situation and consider
several variations of the duality in which morphisms are restricted. In
Section~\ref{sec:lattice-sections} we use the duality to construct a
right-handed skew algebra without a lattice section. This answers negatively a
hitherto open question of existence of such algebras.

\paragraph*{Acknowledgment.}

We thank Jeff Egger, Mai Gehrke, Ganna Kudryavtseva, Jonathan Leech, and Alex
Simpson for discussing the topic with us and offering valuable advice.

\section{Preliminary definitions}
\label{sec:preliminaries}

In the first part of the section we review basic concepts and notation
regarding skew Boolean algebras. In the second part we recall some
basic facts about Boolean spaces and étale maps.

\subsection{Skew Boolean algebras}
\label{sec:skew-boolean-algebras}

A \emph{skew lattice} is an algebra $(A, {\land}, {\lor})$ with idempotent and
associative binary operations \emph{meet~$\land$} and \emph{join~$\lor$}
satisfying the absorption identities $x \land (x \lor y) = x = (y \lor x) \land
x $ and $x \lor (x \land y) = x = (y \land x) \lor x$. If one of the operations
is commutative then so is the other, in which case~$A$ is a lattice,
see~\cite{L1}.

A skew lattice has two order structures. The \emph{natural partial order} $x
\leq y$ is defined by $x \land y = y \land x = x$, or equivalently $x \lor y = y
\lor x = y$. The \emph{natural preorder} $x \preceq y$ is defined by $x \land y
\land x = x$, or equivalently $y \lor x \lor y = y$. The poset refelection of
the natural preorder $\preceq$ is known as \emph{Green's relation~$\DD$}. By
Leech's First Decomposition Theorem~\cite{L1}, $\DD$ is the finest congruence
for which $\D{A}$ is a lattice. In other words, the functor $A\mapsto \D{A}$ is
a reflection of skew lattices into ordinary lattices. We denote the
$\DD$-equivalence class of $a$ by $\dclass{a}$.

The reflection can be analysed further into its left- and right-handed
parts. A skew lattice $A$ is \emph{right-handed} if it satisfies the
identity $x \land y \land x = y \land x$, or equivalently $x \lor y
\lor x = x \lor y$. We define \emph{left-handed} lattices analogously.
Quotients by Green's congruence relations $\RR$ and $\LL$, which are defined by
\begin{align*}
  x \Rrel y &\iff x \land y = y \mathbin{\text{and}} y \land x = x,\\
  x \Lrel y &\iff x \land y = x \mathbin{\text{and}} y \land x = y,
\end{align*}
provide reflections of a skew lattice~$A$ into left-handed and right-handed skew
lattices, respectively. By Leech's Second Decomposition Theorem~\cite{L1} the
square of canonical quotient maps
\begin{equation*}
  \xymatrix@+1em{
    A \ar[d]  \ar@{->}[r]
    &
    A/\LL \ar[d]
    \\
    A/\RR \ar@{->}[r]
    &
    A/\DD   
  }
\end{equation*}
is a pullback in the category of skew algebras.

A \emph{rectangular band $(A, {\land})$} is an algebra with a binary
operation~$\land$ which is idempotent, associative, and it satisfies the
rectangle identity $x \land y \land z = x \land z$. The name comes from the fact
that every rectangular band is isomorphic to a cartesian product $X \times Y$
with the operation $(x_1, y_1) \land (x_2, y_2) = (x_1, y_2)$. If $A$ is
non-empty the sets $X$ and $Y$ are unique up to bijection and can be taken to be
$A/\LL$ and $A/\RR$, respectively. Each rectangular band is a skew lattice
for~$\land$ and the associated operation~$\lor$ defined as $x \lor y = y \land
x$. It turns out that the $\DD$-classes of a skew lattice form rectangular
bands, with the operation induced by the skew lattice.

A \emph{skew Boolean algebra} $(A, 0, {\land}, {\lor}, {\setminus})$ is a
skew lattice which is meet-distributive, i.e., it satisfies the identities
\begin{equation*}
  x \land (y \lor z) = (x \land y) \lor (x \land z)
  \quad\text{and}\quad
  (y \lor z) \land x = (y \land x) \lor (z \land x),
\end{equation*}
has a \emph{zero~$0$}, which is neutral for~$\lor$, and a \emph{relative
  complement~$\setminus$} satisfying
\begin{equation*}
  (x \setminus y) \land (x \land y \land x) = 0
  \quad\text{and}\quad
  (x \setminus y) \lor (x \land y \land x) = x.
\end{equation*}
It follows from these requirement that the principal subalgebras $x \land A
\land x$ of a skew Boolean algebra are Boolean algebras. For example, the two
identities for the relative complement say that $\setminus$ restricted to a
principal subalgebra acts as the complement operation. See Leech~\cite{L4} for
further details on skew Boolean alebras. We remark that a skew Boolean algebra
with a top element $1$ is degenerate in the sense that it is already a Boolean
algebra. Also note that a skew Boolean algebra whose meet and join are
commutative is the same thing as a generalized Boolean algebra.

Often it is the case that any two elements $x$ and $y$ of a skew Boolean
algebra~$A$ have the greatest lower bound $x \cap y$ with respect to the natural
partial order~$\leq$. When this is the case, we call $\cap$ \emph{intersection}
and speak of a \emph{skew Boolean intersection algebra}. Many examples of skew
lattices occurring in nature posses intersections. The significance of skew
Boolean intersection algebras is witnessed by the fact that they form a
discriminator variety~\cite{BL}, and are therefore both congruence permutable
and congruence distributive. Moreover, results by Bignall and Leech~\cite{BL}
imply that every algebra~$A$ in a pointed discriminator variety is term
equivalent to a right-handed skew Boolean intersection algebra whose congruences
coincide with those of~$A$. In contrast, it was observed already by
Cornish~\cite{Cor} that the congruence lattices of skew Boolean algebras in
general satisfy no particular lattice identity.

Henceforth we shall consider exclusively skew Boolean intersection algebras, so
we simply refer to them as \emph{skew algebras}. A homomorphism of skew algebras
preserves all the operations, namely $0$, $\land$, $\lor$, $\setminus$, and
$\cap$.

Recall that an \emph{ideal}, which we sometimes call \emph{$\preceq$-ideal}, in
a skew algebra $A$ is a subset $I \subseteq A$ which is lower with respect to
$\preceq$ and is closed under finite joins, so in particular $0 \in I$.
An ideal $P$ is \emph{prime} if it is non-trivial and $a \land b \in P$ implies
$a \in P$ or $b \in P$. It can be shown easily that the prime ideals in~$A$
coincide with non-zero maps $A \to 2$ into the two-elments lattice $2 =
\set{0,1}$ which preserve $0$, $\land$, and $\lor$ (but not necessarily~$\cap$).
Because~$2$ is commutative, such maps are in bijective correspondence with
non-zero maps $\D{A} \to 2$. In other words, the assignment
\begin{equation*}
  P \mapsto \D{P} = \set{\dclass{a} \such a \in P}
\end{equation*}
is a bijection from prime ideals in~$A$ to prime ideals in~$\D{A}$. 

We write $f : X \parto Y$ to indicate that $f$ is a partial map from $X$ to $Y$,
defined on its \emph{domain} $\dom{f} \subseteq X$. The \emph{restriction}
$\restricted{f}{D}$ of $f : X \parto Y$ to $D \subseteq X$ is the map~$f$ with
the domain restricted to $\dom{f} \cap D$. We denote the set of all partial maps
from $X$ to $Y$ by $\PP{X}{Y}$. Leech's construction \cite{L4} shows how
$\PP{X}{Y}$ can be endowed with a right-handed skew algbera structure by setting
\begin{align*}
  0 &= \emptyset,\\
  f \land  g &= \restricted{g}{\dom{f} \cap \dom{g}},\\
  f \lor g &= f \cup \restricted{g}{\dom{f} - \dom{g}},\\
  f \setminus g &= \restricted{f}{\dom{f} - \dom{g}},\\
  f \cap g &= f \cap g,
\end{align*}
where $-$ is set-theoretic difference, and the set-theoretic operations on the
right-hand sides act on $f$ and $g$ viewed as functional relations. We
generalize this construction to algebras which are not necessarily right-handed,
because we will need one in Section~\ref{sec:dual-skew-algebr}.

Given a subset $D \subseteq X$, let $\PPd{X}{Y}{D} = \set{f : X \parto Y \mid
  \dom{f} = D}$ be the set of those partial maps $X \parto Y$ whose domain
is~$D$. Suppose we are given for each $D \subseteq X$ a binary operation
$\sand_D$ on $\PPd{X}{Y}{D}$. We say that the family $\set{ {\sand_D} \mid D
  \subseteq X}$ is \emph{coherent} when it commutes with restrictions: for all
$E \subseteq D \subseteq X$ and $f, g \in \PPd{X}{Y}{D}$ it holds
\begin{equation}
  \label{eq:sand_restricted}%
  \restricted{(f \sand_D g)}{E} = \restricted{f}{E} \sand_E \restricted{g}{E}.
\end{equation}
We usually omit the subscript from $\sand_D$ and write just $\sand$.

\begin{theorem}
  \label{theorem:partial-functions-band}
  Let $(\sand_D)_{D \subseteq X}$ be a coherent family of rectangular bands.
  Then $\PP{X}{Y}$ is a skew algebra for the following operations, defined for
  $f, g \in \PP{X}{Y}$ with $\dom{f} = F$ and $\dom{g} = G$:
  \begin{align*}
    0 &= \emptyset,\\
    f \land g &= \restricted{f}{F \cap G} \sand \restricted{g}{F \cap G} ,\\
    f \lor g &=(\restricted{f}{F - G}) \cup (\restricted{g}{G - F}) \cup (g\land f) ,\\
    f \setminus g &= \restricted{f}{F- G},\\
    f \cap g &= f \cap g.
\end{align*}
\end{theorem}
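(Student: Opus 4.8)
The plan is to verify directly that the five operations satisfy the axioms of a skew algebra, reducing every identity to two ingredients: the Boolean combinatorics of the domains inside the powerset of~$X$, and the rectangular band laws holding in each fibre $\PPd{X}{Y}{D}$. The crucial preliminary observation is that coherence makes $\sand$ \emph{local}: if $D = \bigsqcup_i D_i$ is a partition and $u, v \in \PPd{X}{Y}{D}$, then $u \sand v = \bigcup_i (\restricted{u}{D_i} \sand \restricted{v}{D_i})$, since a partial map is the union of its restrictions to a partition of its domain while $\restricted{(u \sand v)}{D_i} = \restricted{u}{D_i} \sand \restricted{v}{D_i}$ by~\eqref{eq:sand_restricted}. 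I will use this repeatedly to evaluate any expression on the region where a fixed set of the domains involved are present, since on such a region all the relevant restrictions lie in a single fibre and may be manipulated by the rectangular band laws: idempotency, associativity, and the rectangle identity $a \sand b \sand c = a \sand c$ (so in particular $a \sand b \sand a = a$).

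First I would record the domains of the derived operations: $\dom{(f \land g)} = F \cap G$; $\dom{(f \lor g)} = F \cup G$, the three pieces of the union being pairwise disjoint with domains $F - G$, $G - F$, $F \cap G$; $\dom{(f \setminus g)} = F - G$; and $f \cap g$ has domain $\set{x \in F \cap G \such f(x) = g(x)}$. In particular every derived operation again yields a partial map. Idempotency of $\land$ and $\lor$ is immediate from idempotency of $\sand$. For associativity of $\land$, coherence turns both $(f \land g) \land h$ and $f \land (g \land h)$ into $\restricted{f}{K} \sand \restricted{g}{K} \sand \restricted{h}{K}$ with $K = F \cap G \cap H$ (writing $H = \dom{h}$), and associativity of $\sand_K$ finishes the job. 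The absorption laws and the two relative-complement identities reduce, after restricting to the relevant region, to the rectangle identity; for instance on $F \cap G$ one computes $f \land g \land f = \restricted{f}{F \cap G}$, whence $(f \setminus g) \land (f \land g \land f) = 0$ because the domains $F - G$ and $F \cap G$ are disjoint, and $(f \setminus g) \lor (f \land g \land f) = f$ because the two summands reassemble $f$ on the partition $F = (F - G) \sqcup (F \cap G)$.

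For meet-distributivity I would partition the ambient set by which of the domains $F, G, H$ a point belongs to; on each region all present functions share a common domain, so both sides collapse to an instance of meet-distributivity in a rectangular band, a direct consequence of the rectangle identity, while on regions where some function is absent the bookkeeping of the union pieces matches on the two sides. The same region-by-region method handles the associativity of $\lor$: on $F \cap G \cap H$ the join agrees with the band join $x \lor y = y \sand x$, which is associative, and on the remaining regions the absent operands contribute only to disjoint parts of the domain. Finally, to identify $\cap$ as the intersection I first show that the natural partial order coincides with set-theoretic inclusion of partial maps: from $f \land g = g \land f = f$ one gets $F \subseteq G$, and then the two band equations $f \sand \restricted{g}{F} = f = \restricted{g}{F} \sand f$ force $\restricted{g}{F} = f$ by the rectangle law, i.e.\ $f \subseteq g$; conversely inclusion gives $f \leq g$ by idempotency. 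Since $f \cap g$ is the largest partial map contained in both $f$ and $g$, it is their greatest lower bound for $\leq$.

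I expect the main obstacle to be the associativity of $\lor$ together with the two distributivity laws, purely because of the case bookkeeping: the join is a union of three pieces treated asymmetrically, so iterating it forces one to track, region by region, which of the operands are defined. The locality of $\sand$ is exactly what tames this, as it lets each region be analysed inside a single fibre where only the rectangular band identities are needed; the remaining content is the elementary Boolean algebra of the domains.
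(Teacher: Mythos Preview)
Your proposal is correct and follows essentially the same approach as the paper: both arguments hinge on the observation that coherence lets every identity be checked ``by parts'' on a partition of the domain, reducing each piece either to trivial domain bookkeeping or to a computation inside a single fibre where the rectangular band laws apply. Your locality principle for~$\sand$ is the paper's statement that the operations commute with restrictions, and your region-by-region verification of associativity of~$\lor$ and meet-distributivity matches the paper's explicit decomposition into the atoms of the Boolean subalgebra generated by $F$, $G$, $H$; you even supply the short argument that the natural partial order coincides with inclusion, which the paper simply asserts.
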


\begin{proof}
  Let $f, g, h \in \PP{X}{Y}$ be partial maps with domains $\dom{f} = F$,
  $\dom{g} = G$, and $\dom{h} = H$, respectively. Observe that the operations
  defined in the statement of the theorem all commute with restrictions, for
  example
  \begin{multline*}
    \restricted{(f \lor g)}{D} =
    \restricted{f}{(F - G) \cap D} \cup \restricted{g}{(G - F) \cap D} \cup
    (\restricted{f}{D \cap F \cap G} \sand \restricted{g}{D \cap F \cap G}) = \\ 
    \restricted{f}{(D \cap F) - (D \cap G)} \cup
    \restricted{g}{(D \cap G) - (D \cap F)} \cup
    (\restricted{f}{D \cap F \cap G} \sand \restricted{g}{D \cap F \cap G}) =
    \restricted{f}{D} \lor \restricted{g}{D}.
  \end{multline*}
  Thus a good strategy for checking an identity is to do it ``by parts'' as
  follows. To check $u = v$ it suffices to check $\restricted{u}{X} =
  \restricted{v}{X}$ and $\restricted{u}{Y} = \restricted{v}{Y}$ separately,
  provided that $\dom{u} = \dom{v} = X \cup Y$. Of course, this only works if
  $X$ and $Y$ are suitably chosen so that the restrictions $\restricted{u}{X}$,
  $\restricted{u}{Y}$, $\restricted{u}{Y}$, and $\restricted{v}{Y}$ simplify
  when we push the restrictions by~$X$ and~$Y$ inwards.

  The following properties are easily verified: $0$ is neutral for $\lor$,
  idempotency of $\land$ and $\lor$, associativity of $\land$. That $\cap$
  computes greatest lower bounds holds because the natural partial order on
  $\PP{X}{Y}$ is subset inclusion $\subseteq$ of functions viewed as functional
  relations. It remains to check associativity of $\lor$, meet distributivity,
  and the properties of $\setminus$.

  Associativity of $\lor$ is checked by parts. The domain of $f \lor (g \lor h)$
  and $(f \lor g) \lor h$ is $F \cup G \cup H$, which is covered by the parts
  $(F \cup G) - H$, $(F \cup H) - G$, $(G \cup H) - F$, and $F \cap G \cap H$.
  On the first part we get
  \begin{equation*}
    \restricted{((f \lor g) \lor h)}{(F \cup G) - H} =
    (\restricted{f}{F - H} \lor \restricted{g}{G - H}) \lor
    \restricted{h}{\emptyset} =
    \restricted{f}{F - H} \lor \restricted{g}{G - H}.
  \end{equation*}
  and
  \begin{equation*}
    \restricted{(f \lor (g \lor h))}{(F \cup G) - H} =
    \restricted{f}{F - H} \lor (\restricted{g}{G - H} \lor
    \restricted{h}{\emptyset}) =
    \restricted{f}{F - H} \lor \restricted{g}{G - H}.
  \end{equation*}
  On $(F \cup H) - G$ and $(G \cup H) - F$ the calculation is similar, while on
  $F \cap G \cap H$ both meets and joins turn into $\sand$ and the identity
  follows as well.

  Next we check that meet distributivity holds. The domain of $f \land (g \lor
  h)$ and $(f \land g) \lor (f \land h)$ is $F \cap (G \cup H)$. It is covered
  by the parts $(F \cap G) - H$, $(F \cap H) - G$, and $F \cap G \cap H$. On the
  first part we get
  \begin{multline*}
    \restricted{(f \land (g \lor h))}{(F \cap G) - H} =
    \restricted{f}{(F \cap G) - H} \land (\restricted{g}{(F \cap G) - H} \lor \restricted{h}{\emptyset}) = \\
    \restricted{f}{(F \cap G) - H} \land \restricted{g}{(F \cap G) - H}
  \end{multline*}
  and
  \begin{multline*}
    \restricted{(f \land g) \lor (f \land h))}{(F \cap G) - H} = \\
    (\restricted{f}{(F \cap G) - H} \land \restricted{g}{(F \cap G) - H}) \lor
    (\restricted{f}{(F \cap G) - H} \land \restricted{h}{\emptyset}) = \\
    (\restricted{f}{(F \cap G) - H} \land \restricted{g}{(F \cap G) - H}) \lor 0 =
    \restricted{f}{(F \cap G) - H} \land \restricted{g}{(F \cap G) - H}.
  \end{multline*}
  The calculation on $(F \cap H) - G$ is similar, and on $F \cap G \cap H$ it
  again trivializes because all operations become $\sand$. For the other half of
  meet distributivity, namely $(f \lor g) \land h = (f \land h) \lor (g \land
  h)$ we use the parts $(F \cap H) - G$, $(G \cap H) - F$, and $F \cap G \cap
  H$.

  Finaly, $\setminus$ satisfies the axioms of relative complementation because
  \begin{equation*}
    (f \setminus g) \land (f \land g \land f) =
    \restricted{f}{F-G} \land \restricted{f}{F\cap G} =
    \emptyset = 0
  \end{equation*}
  and
  \begin{equation*}
    (f \setminus g) \lor (f \land g \land f) =
    \restricted{f}{F-G} \lor \restricted{f}{F\cap G} =
    \restricted{f}{F-G} \cup \restricted{f}{F\cap G} \cup \emptyset =
    f.
  \end{equation*}
\end{proof}

\subsection{Boolean spaces and étale maps}
\label{sec:boolean-spaces-etale-maps}

We start by recaling several standard topological notions. A space is
\emph{zero-dimensional} if its clopens (sets which are both open and
closed) form a topological base. A \emph{Stone space} is a compact
zero-dimensional Hausdorff space, while a \emph{Boolean space} is a
locally compact zero-dimensional Hausdorff space. We call a set which
is compact and open a \emph{copen}. In a Boolean space the copens form
toplogical base.

A partial map $f : X \parto Y$ is said to be continuous when it is continuous as
a map defined on the subset $\dom{f} \subseteq X$ with the induced topology.
Unless noted otherwise, the domain of definition $\dom{f}$ is always going to be
an open subset of~$X$. A continuous map is \emph{proper} if its inverse image
map takes compact subsets to compact subsets, while a partial continuous map
with an open domain of definition is proper when the inverse image $f^{-1}(K)$
of a compact subset $K \subseteq Y$ is compact in $\dom{f}$, or equivalently
in~$X$.

An \emph{étale map $p : E \to B$}, also known as \emph{local
  homeomorphism}, is a continuous map for which~$E$ has an open cover
such that for each $U$ in the cover the restriction $\restricted{p}{U}
: U \to p(U)$ is a homeomorphism onto the image, and $p(U)$ is open in
$B$. We call~$E$ the \emph{total space} and~$B$ the \emph{base} of the
étale map~$p$. The \emph{fiber} above $x \in B$ is the subspace $E_x =
\set{y \in E \such p(y) = x}$. A \emph{section} of~$p$ is a continuous
map $s : U \to E$ defined on a subset $U \subseteq B$, usually open,
such that $p \circ s = \id{U}$.

Étale maps with a common base $B$ form a category, even a topos, in
which morphisms are commutative triangles
\begin{equation*}
  \xymatrix{
    {E} \ar[rr]^{f} \ar[rd]_{p}
    &
    &
    {E'} \ar[ld]^{p'}
    \\
    &
    {B}
    &
  }
\end{equation*}
where $f$ is a continuous map. It follows that $f$ is an étale map,
see for example~\cite[II.6]{maclane92:_sheav_geomet_logic}. One
consequence of this is that a section $s : U \to E$ of an étale map $p
: E \to B$ defined on an open subset $U \subseteq B$ is itself an
étale map (because the inclusion $U \hookrightarrow E$ is an étale
map). In particular, the image $s(U)$ is open in~$E$ and (images of)
open sections form a base for~$E$.

A \emph{copen section} of $p : E \to B$ is a section $s : U \to E$
defined on a copen subset $U \subseteq B$. Its image $s(U)$ is not
only open but also compact in~$E$, and the restriction
$\restricted{p}{s(U)}$ is a homeomorphism from~$s(U)$ onto~$U$.
Conversely, if $S \subseteq E$ is copen and $\restricted{p}{S} : S \to
p(S)$ is a homeomorphism onto $p(S)$ then $(\restricted{p}{S})^{-1}:
p(S) \to S$ is a copen section. This is so because étale maps are
open. We therefore have two views of copen sections: as sections
defined on copen subsets, and as those copen subsets of the total
space which cover each point in the base at most once.

In Section~\ref{sec:dual-skew-algebr} we will need to know how to
compute equalizers and coequalizers in the category of étale maps over
a given base.

\begin{proposition}
  \label{prop:etale-equalizers-coequalizers}
  Let $f$ and $g$ be morphisms of étale maps,
  \begin{equation*}
    \xymatrix{
      {E} \ar[rd]_{p}
      \ar@<0.25em>[rr]^{f} 
      \ar@<-0.25em>[rr]_{g}
      &
      &
      {E'} \ar[ld]^{p'}
      \\
      &
      {B}
      &
    }
  \end{equation*}
  The equalizer and coequalizer of~$f$ and~$g$ in the category of
  étale maps with base~$B$ are computed as in the category of
  topological spaces. Moreover, the quotient map from $E'$ to the
  coequalizer is open.
\end{proposition}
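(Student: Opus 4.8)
The plan is to show that equalizers and coequalizers computed in $\Set$ (with the subspace/quotient topology) already land in the category of étale maps over $B$, i.e.\ that the universal constructions from $\mathsf{Top}$ are inherited. The key observation driving everything is that since $f$ and $g$ are themselves étale (being morphisms of étale maps), they are local homeomorphisms, and local homeomorphisms are particularly well-behaved with respect to both the subspace and quotient constructions.

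**For the equalizer,** I would first form the topological equalizer $S = \set{x \in E \such f(x) = g(x)}$ with the subspace topology, together with the inclusion $S \hookrightarrow E$ and the composite $\restricted{p}{S} : S \to B$. The main thing to verify is that $S$ is \emph{open} in~$E$: given $x \in S$, choose an open $U \ni x$ on which $p$ restricts to a homeomorphism onto an open $p(U) \subseteq B$; since $f$ and $g$ are sections-like over the same base (both commute with the projections, so $p' \circ f = p = p' \circ g$ on $U$) and $E'$ is étale, the points $f(x) = g(x)$ have a neighborhood where $f$ and $g$, read through local sections of $p'$, are forced to agree on a whole neighborhood of~$x$. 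Hence $S$ is open, the inclusion $S \hookrightarrow E$ is étale, and $\restricted{p}{S}$ is étale over~$B$. The universal property then transfers directly from $\mathsf{Top}$, since any competing cone factors uniquely through~$S$ as a set map and continuity is automatic for the open inclusion.

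**For the coequalizer,** I would take the set-theoretic coequalizer $q : E' \to Q$, where $Q = E'/\theta$ for $\theta$ the equivalence relation generated by $f(x) \sim g(x)$, equipped with the quotient topology, and the induced map $\bar p : Q \to B$ satisfying $\bar p \circ q = p'$. The crux is to prove that $\bar p$ is étale and that $q$ is open; in fact \textbf{proving $q$ is open is the main obstacle}, and once it is established the rest follows quickly. The difficulty is that quotient maps are not open in general, so I must exploit the étale structure: the relation being quotiented is fiberwise (it only ever identifies points lying in the same fiber $E'_b$, because $p' \circ f = p' \circ g$), and within each discrete fiber the identifications are controlled by local sections. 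Concretely, for an open $V \subseteq E'$ I would show $q^{-1}(q(V)) = \bigcup \set{h(V) \such h \text{ a local homeomorphism arising from the relation}}$ is a union of images of open sets under étale (hence open) maps, so it is open, whence $q(V)$ is open by definition of the quotient topology. This openness of $q$ gives that $q$ is étale, and then $\bar p = \restricted{(p' \text{ through } q)}{}$ composes étale-locally to an étale map, so $\bar p : Q \to B$ is an object of the category and $q$ is a morphism.

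**Finally,** with $q$ open and $\bar p$ étale in hand, the universal property is inherited from $\mathsf{Top}$: any morphism of étale maps $E' \to F$ over $B$ that coequalizes $f$ and $g$ factors through the set-theoretic quotient uniquely, and the factoring map is continuous because $q$ is a quotient map and étale because it is a morphism in the category of étale maps over~$B$. I expect the Hausdorff and zero-dimensionality hypotheses on the Boolean spaces to play only a supporting role here—the argument is essentially about local homeomorphisms—but they guarantee we remain inside the relevant category and make the local sections through which the identifications are traced behave cleanly.
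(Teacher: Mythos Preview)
Your proposal is correct and follows essentially the same approach as the paper: show the equalizer subset is open in~$E$ (hence étale over~$B$), and show the coequalizer quotient map is open (hence the quotient is étale over~$B$). The paper phrases the openness of~$q$ slightly differently---arguing that the generating relation $S = \set{(f(x),g(x)) \such x \in E}$, and therefore the equivalence relation~$R$ it generates, is open in $E' \times_B E'$---but this is equivalent to your description via saturations under local homeomorphisms; note also that the proposition neither assumes nor uses Hausdorffness or zero-dimensionality, so your closing remark about those hypotheses is moot.
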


\begin{proof}
  In the category of topological spaces the equalizer of $f$ and $g$
  is the subspace
  \begin{equation*}
    I = \set{x \in E \such f(x) = g(x)}
  \end{equation*}
  with the subspace inclusion $i : I \to E$. For this to be an
  equalizer in the category of étale maps, $p \circ i : I \to B$ must
  be étale, which is the case because $I$ is an open subspace of~$E$.
  To see this, consider any $x \in I$. Because~$f$ and~$g$ are étale
  maps there is an open section $U \subseteq E$ containing $x$ such
  that $\restricted{f}{U} : U \to f(U)$ and $\restricted{g}{U} : U \to
  g(U)$ are homeomorphisms onto open subsets of $E'$. Thus the
  intersection $f(U) \cap g(U)$ is open, from which it follows that $U
  \cap f^{-1}(g(U)) \cap g^{-1}(f(U))$ is an open neighborhood of $x$
  contained in~$I$.

  The coequalizer of $f$ and $g$ is computed in topological spaces as
  the quotient space $Q = E'/R$ where $R \subseteq E' \times E'$ is
  the least equivalence relation generated by the relation $S =
  \set{(f(x), g(x)) \in E' \times E' \such x \in E}$. Because $S$ is
  an open subset of the fibered product $E' \times_B E'$ so is $R$,
  from which it follows that the canonical quotient map~$q : E' \to Q$
  is open. Furthermore, because $p' \circ f = p = p' \circ g$ the map
  $p'$ factors through~$q$ as $p' = r \circ q$. To complete the proof,
  we need to show that~$r$ is étale, but this is easy because we
  already know that~$q$ is open.
\end{proof}

\section{Duality for commutative algebras}
\label{sec:duality-commutative}

Before embarking on duality for skew algebras we review the familiar commutative
case. The \emph{spectrum} $\St{A}$ of a Boolean algebra $A$ is the Stone space
whose points are the prime ideals of~$A$. The elements $a \in A$ correspond to
the basic clopen sets $\baseN{a} = \set{P \in \St{A} \such a \not\in P}$. A
homomorphism $f : A \to A'$ between Boolean algebras induces a continuous map
$\down{f} : \St{A'} \to \St{A}$ that maps a prime ideal~$P$ to its preimage
$\down{f}(P) = f^{-1}(P)$. In the other direction the duality maps a Stone space
to the Boolean algebra of its clopen subsets, with the expected operations of
intersection and union.

A short path to Stone duality for generalized Boolean algebras goes through the
observation that the category $\GBA$ of generalized Boolean algebras is
equivalent to the slice category $\BA/\two$ of Boolean algebras over the initial
algebra~$\two$. By Stone duality for Boolean algebras $\GBA$ is then
dual to \emph{pointed} Stone spaces and continuous maps
which preserve the chosen point. For our purposes it is more convenient to take
yet another equivalent category, namely Stone spaces \emph{without} one point,
which are precisely the Boolean spaces, and suitable partial maps between them.

Let us describe the duality explicitly. Starting from a generalized Boolean
algebra~$A$, we construct its \emph{spectrum} $\St{A}$ as the Boolean space of
prime ideals. An element $a \in A$ corresponds to the basic copen set $\baseN{a}
= \set{P \in \St{A} \such a \not\in P}$.
A homomorphism $f : A \to A'$ between generalized Boolean algebras
induces a partial map $\down{f} : \St{A'} \parto \St{A}$ that maps a
prime ideal~$P$ to its preimage $\down{f}(P)$, provided the preimage
is not all of~$A$. The domain of definition of $\down{f}$ is open, for
if $a \not\in \down{f}(P)$ then $\down{f}$ is defined on
$\baseN{f(a)}$. The fact that the inverse image
$\down{f}^{-1}(\baseN{a})$ of a basic copen is the basic copen
$\baseN{f(a)}$ implies that $\down{f}$ is both continuous and proper.
Indeed, $f$ is proper because the inverse image $f^{-1}(K)$ of a
compact subset $K \subseteq \St{A}$ is a closed subset of the
compact set $f^{-1}(\baseN{a_1}) \cup \cdots \cup
f^{-1}(\baseN{a_n})$ where $\baseN{a_1}, \ldots, \baseN{a_n}$ is some
finite cover of $K$ by basic copen sets.
In summary, the category of generalized Boolean algebras is equivalent to the
category of Boolean spaces and proper continuous partial maps with open domains
of definition.

\section{Duality for skew algebras}
\label{sec:duality-skew}

Any attempt at extension of Stone duality naturally leads to consideration of
prime ideals. Since the one-point space~$\one$ corresponds to the initial
Boolean algebra~$\two$, a point $\one \to X$ on the topological side corresponds
to homomorphisms $A \to \two$ on the algebraic side. However, since such
homomorphisms factor through the commutative reflection $A \to \D{A}$,
they give us insufficient information about the non-commutative structure
of~$A$. We should therefore expect that on the topological side we have to look
for structures that can be rich even though they have few \emph{global} points,
while on the algebraic side we cannot afford to use ideals exclusively, but must also
consider congruence relations.

In the case of a commutative algebra the congruence relation generated by a
prime ideal has just two equivalence classes. In contrast, the least congruence
relation $\theta_P$ whose zero-class contains the prime ideal $P \subseteq A$ in
a skew algebra~$A$ generally has many equivalence classes, which ought to be
accounted for on the topological side of duality. The following result of
Bignall and Leech~\cite{BL} characterizes the congruence relations generated by
ideals in a skew algebra.

\begin{lemma}[Bignall \& Leech]
  \label{lemma:cong}
  For an ideal $I \subseteq A$ in a skew algebra~$A$ let $\thetaeq_I$ be the
  least congruence relation on~$A$ whose zero-class contains~$I$. Then for all
  $x, y \in A$, $x \thetaeq_I y$ if, and only if, $(x \setminus (x \cap y)) \lor
  (y \setminus (x \cap y)) \in I$.
\end{lemma}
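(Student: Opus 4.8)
The plan is to show that the relation $\rho$ given by
$x \mathbin{\rho} y \iff (x \setminus (x \cap y)) \lor (y \setminus (x \cap y)) \in I$
is precisely $\thetaeq_I$; write $x \mathbin{\triangle} y$ for the element $(x \setminus (x \cap y)) \lor (y \setminus (x \cap y))$, a skew analogue of symmetric difference. Two preliminary facts will do most of the work. First, the zero-class of \emph{any} congruence $\theta$ on a skew algebra is a $\preceq$-ideal: it is closed under $\lor$ because $a \mathbin{\theta} 0$ and $b \mathbin{\theta} 0$ give $a \lor b \mathbin{\theta} 0 \lor 0 = 0$, and it is $\preceq$-lower because $a \preceq b$ means $a \land b \land a = a$, whence $b \mathbin{\theta} 0$ forces $a = a \land b \land a \mathbin{\theta} a \land 0 \land a = 0$. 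Second, the relative-complement axioms give the identity $x = (x \setminus (x \cap y)) \lor (x \cap y)$, since $x \cap y \leq x$ yields $x \land (x \cap y) \land x = x \cap y$. A short computation with these identities also shows $[0]_\rho = I$ exactly, because $x \mathbin{\triangle} 0 = x$.

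For the implication $x \mathbin{\triangle} y \in I \Rightarrow x \thetaeq_I y$ I would argue directly, without needing $\rho$ to be a congruence. The two joinands $x \setminus (x \cap y)$ and $y \setminus (x \cap y)$ each lie $\preceq$-below $x \mathbin{\triangle} y$, which belongs to $I \subseteq [0]_{\thetaeq_I}$; since the latter is a $\preceq$-ideal, both joinands are $\thetaeq_I$-equivalent to $0$. Feeding this into $x = (x \setminus (x \cap y)) \lor (x \cap y)$ and its mirror for $y$, and using commutativity of $\cap$, gives $x \thetaeq_I (x \cap y) \thetaeq_I y$.

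For the converse I would prove that $\rho$ is a congruence; as $[0]_\rho = I$, minimality of $\thetaeq_I$ then yields $\thetaeq_I \subseteq \rho$, i.e.\ $x \thetaeq_I y \Rightarrow x \mathbin{\triangle} y \in I$. Reflexivity is $x \mathbin{\triangle} x = 0$, and symmetry holds because $x \mathbin{\triangle} y$ and $y \mathbin{\triangle} x$ are joins of the same two elements in opposite order, hence $\DD$-equivalent, and $I$ is $\DD$-saturated. The substance is transitivity together with compatibility with $\land$, $\lor$, $\setminus$, and $\cap$, and I would deduce all of these from a single family of $\preceq$-inequalities for $\mathbin{\triangle}$: a triangle law $x \mathbin{\triangle} z \preceq (x \mathbin{\triangle} y) \lor (y \mathbin{\triangle} z)$, and a non-expansiveness law $(x \mathbin{*} z) \mathbin{\triangle} (y \mathbin{*} z) \preceq x \mathbin{\triangle} y$ together with its left-hand companion for each operation $\mathbin{*}$. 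Because $I$ is $\preceq$-lower and closed under $\lor$, each inequality at once yields the matching closure property of $\rho$, compatibility being obtained one variable at a time and then glued by transitivity.

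The hard part will be establishing these inequalities in the genuinely non-commutative setting. Here one cannot simply import the classical Boolean calculation, because $\cap$ does \emph{not} descend to the commutative reflection $\D{A}$ --- already in the three-element algebra two distinct $\DD$-equivalent atoms $a, b$ satisfy $a \cap b = 0$ although $\dclass{a} = \dclass{b}$ --- so $\rho$ genuinely sees non-commutative information. I would attack the inequalities by the ``by parts'' method used in the proof of Theorem~\ref{theorem:partial-functions-band}: restrict each one to the principal subalgebras $w \land A \land w$, which are Boolean and on which $\mathbin{\triangle}$ is the ordinary symmetric difference, and track the interaction between $\DD$-classes through the rectangular-band structure recorded by $\LL$ and $\RR$. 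The non-expansiveness of $\cap$ is the most delicate case, since it is the one operation invisible to $\D{A}$; for it I would lean on the fact that $x \cap y$ is the greatest lower bound for $\leq$ and on the way $\setminus$ and $\cap$ cooperate inside each principal Boolean subalgebra.
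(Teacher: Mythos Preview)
The paper does not supply a proof of this lemma: it is stated with attribution to Bignall and Leech~\cite{BL} and then used as a black box. So there is no ``paper's own proof'' to compare against; your proposal is being measured against the original source, which proceeds via the structure theory of primitive skew Boolean intersection algebras.

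Your overall architecture is the standard one and is correct as far as it goes. The forward direction $x \mathbin{\triangle} y \in I \Rightarrow x \thetaeq_I y$ is clean and complete. For the converse, reducing to ``$\rho$ is a congruence with $[0]_\rho = I$'' is the right move, and encoding transitivity and compatibility as $\preceq$-inequalities that $I$ will absorb is exactly how one wants to organise the computation.

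The gap is in how you propose to \emph{prove} those inequalities. The ``by parts'' method of Theorem~\ref{theorem:partial-functions-band} is a device internal to the concrete model $\PP{X}{Y}$: it works because every element there literally has a domain that can be partitioned, and all operations commute with restriction to subsets of that domain. In an abstract skew algebra there is no such decomposition available a priori; the principal subalgebras $w \land A \land w$ do not partition $A$, the elements $x, y, z$ in your triangle inequality need not lie in a common one, and $\cap$ does not respect passage between them. To make your strategy go through you would first need a subdirect representation of $A$ by primitive algebras (equivalently, an embedding into some $\PP{X}{Y}$ preserving all the operations including $\cap$), and then verify the inequalities coordinatewise there. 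That is essentially what Bignall and Leech do; but once you have the representation, the inequalities are immediate in each factor and the ``by parts'' bookkeeping is unnecessary. Alternatively one can give a direct equational argument in the variety, but that is a genuinely separate calculation that your sketch does not yet contain.
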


\noindent
In fact, the zero-class of $\thetaeq_I$ equals $I$. Consequently, the ideals of
a skew algebra are in bijective correspondence with congruence relations. A
consequence of Lemma~\ref{lemma:cong} is that $x \thetaeq_{f^{-1}(I)} y$ is
equivalent to $f(x) \thetaeq_I f(y)$, for any skew algebra homomorphism $f : A
\to A'$ and any ideal $I \subseteq A'$. From this we obtain the following
properties of prime ideals.

\begin{lemma}
  \label{lemma:element-max-ideal}
  \label{lemma:s-less-than-t-not-in-P}
  \label{lemma:int-in-P}
  Let $P$ be a prime ideal in a skew algebra $A$ and let $x, y \in A$. Then:
  \begin{enumerate}
  \item  $x \in P$ or $y \setminus x\in P$.
  \item If $x \leq y$ and $x \not\in P$ then $x \thetaeq_P y$.
  \item If $x\leq y$ then $x \in P$ is equivalent to $y \thetaeq_P y \setminus x$.
  \end{enumerate}
\end{lemma}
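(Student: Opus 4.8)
The plan is to establish the three parts in order. The only one requiring a genuine idea is~(1); parts~(2) and~(3) then follow by direct computation with the congruence criterion of Lemma~\ref{lemma:cong} together with the fact that $\setminus$ restricts to honest complementation on principal subalgebras. Throughout I would represent the prime ideal $P$ by the non-zero map $\phi : A \to \two$ preserving $0$, $\land$, $\lor$ with $P = \phi^{-1}(0)$, as provided by the characterization of prime ideals recalled above. The main obstacle is that $\phi$ need not preserve $\setminus$, so its value on $y \setminus x$ must be recovered indirectly from the two relative-complement axioms.

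For part~(1), applying $\phi$ to the identities $(y \setminus x) \land (y \land x \land y) = 0$ and $(y \setminus x) \lor (y \land x \land y) = y$, and using that $\two$ is commutative so that $\phi(y \land x \land y) = \phi(x) \land \phi(y)$, yields
\[
  \phi(y \setminus x) \land \phi(x) \land \phi(y) = 0
  \quad\text{and}\quad
  \phi(y \setminus x) \lor (\phi(x) \land \phi(y)) = \phi(y)
\]
in $\two$. I would then argue by contradiction: were $x \notin P$ and $y \setminus x \notin P$, that is $\phi(x) = \phi(y \setminus x) = 1$, the second identity would force $\phi(y) = 1$ while the first would force $\phi(y) = 0$. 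Hence $x \in P$ or $y \setminus x \in P$.

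For part~(2), assume $x \leq y$. Then $x$ is the greatest lower bound of $x$ and $y$, so $x \cap y = x$, and since $x \land A \land x$ is a Boolean algebra on which $\setminus$ acts as complement we have $x \setminus x = 0$. Substituting into the witness $(x \setminus (x \cap y)) \lor (y \setminus (x \cap y))$ of Lemma~\ref{lemma:cong} collapses it to $0 \lor (y \setminus x) = y \setminus x$, so $x \thetaeq_P y$ is equivalent to $y \setminus x \in P$. Under the hypothesis $x \notin P$, part~(1) delivers $y \setminus x \in P$, hence $x \thetaeq_P y$.

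Part~(3) is a similar but slightly longer computation. With $x \leq y$ one has $y \land x \land y = x$, so the relative-complement axioms exhibit $x$ and $y \setminus x$ as mutually complementary elements of the Boolean algebra $y \land A \land y$ with top $y$; in particular $y \setminus x \leq y$ and, by involutivity of Boolean complementation, $y \setminus (y \setminus x) = x$. Feeding $u = y$ and $v = y \setminus x$ into Lemma~\ref{lemma:cong}, and using $y \cap (y \setminus x) = y \setminus x$ together with $(y \setminus x) \setminus (y \setminus x) = 0$, reduces the witness to $(y \setminus (y \setminus x)) \lor 0 = x$. Thus $y \thetaeq_P (y \setminus x)$ holds precisely when $x \in P$, which is the claimed equivalence. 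I expect no difficulty beyond the one already isolated in part~(1): parts~(2) and~(3) are bookkeeping once one knows that meets, joins, and relative complements inside each principal subalgebra obey the usual Boolean laws.
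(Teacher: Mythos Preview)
Your proof is correct, and parts~(2) and~(3) proceed exactly as in the paper: reduce the witness of Lemma~\ref{lemma:cong} using $x \cap y = x$ (respectively $y \cap (y\setminus x) = y\setminus x$ and $y \setminus (y\setminus x) = x$) and invoke part~(1).

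The one genuine difference is in part~(1). You treat the possible failure of $\phi$ to preserve $\setminus$ as the main obstacle and work around it by applying $\phi$ to both relative-complement axioms and arguing by contradiction. The paper's argument is a single line: it observes that $x \land (y \setminus x) = 0 \in P$ holds as an identity in any skew Boolean algebra, and then primality of $P$ immediately yields $x \in P$ or $y \setminus x \in P$. In other words, no passage through $\phi$ is needed at all, and the perceived obstacle dissolves once one recalls this identity. Your route is perfectly valid but longer; the paper's buys brevity at the cost of assuming (or recalling) that $x \land (y\setminus x) = 0$ holds outright, not merely modulo the principal subalgebra.
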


\begin{proof}
  \begin{enumerate}
  \item We have $x \land (y \setminus x) = 0\in P$. Because $P$ is a prime
    ideal it follows that $x\in P$ or $y\setminus x\in P$.
  \item We need to show that $(x \setminus (x\cap y)) \lor (y\setminus (x\cap
    y)) \in P$. Since $x \leq y$ it follows that $x \cap y = x$ and thus we only
    need $y \setminus x \in P$, which follows from the first statement of the lemma.
  \item The element $x'=y\setminus x$ is the complement of $x$
    in the Boolean algebra $y \land A \land y$. Hence $x'\leq y$ and $y\setminus x' = x$. 
   Thus Lemma \ref{lemma:cong} implies that
   $y\thetaeq_P x'$ is equivalent to $(y\setminus x') \lor (x'\setminus x') = x \in P$.
  \end{enumerate}
\end{proof}

\subsection{From algebras to spaces}
\label{sec:algebras-to-spaces}

Following our own advice that the topological side of duality should account for
the equivalence classes of congruences, we define the
\emph{skew spectrum} $\Sk{A}$ of a skew algebra $A$ to be the space whose points
are pairs $(P, e)$, where $P$ is a prime ideal in~$A$ and $e$ is a non-zero
equivalence class of~$\theta_P$. Since every non-zero equivalence class equals
$[t]_{\theta_P}$ for some $t \not\in P$, a general element of the skew spectrum
may be written as $(P, [t]_{\theta_P})$. We write just $\elemsk{P}{t}$. Thus, as
a set the skew spectrum is
\begin{equation*}
  \Sk{A} = \set{
    \elemsk{P}{t} \such
    \text{$P$ prime ideal in $A$ and $t \not\in P$}
  },
\end{equation*}
where $\elemsk{P}{t} = \elemsk{Q}{u}$ when $P = Q$ and $t \thetaeq_P u$. The
topology of $\Sk{A}$ is the one whose basic open sets are of the form, for $a
\in A$,
\begin{equation*}
  \baseM{a} = \set{\elemsk{P}{a} \such a\not\in P}.
\end{equation*}
These really form a basis because they are closed under intersections.

\begin{lemma}
  \label{lemma:baseM-preserves-cap}
  Let $a,b\in A$. Then  $\baseM{a} \cap \baseM{b} = \baseM{a \cap b}$.
\end{lemma}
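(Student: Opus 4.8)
The plan is to reduce the claimed set equality to a pointwise biconditional and then verify each inclusion using the description of $\thetaeq_P$ from Lemma~\ref{lemma:cong}. First I would record the membership criterion that drives everything: a point $\elemsk{P}{t}$ (so $t \notin P$) lies in $\baseM{a}$ exactly when $a \notin P$ and $t \thetaeq_P a$, since the elements of $\baseM{a}$ are precisely the points that admit $a$ as a representative. Hence $\elemsk{P}{t} \in \baseM{a} \cap \baseM{b}$ means $a, b \notin P$ together with $t \thetaeq_P a$ and $t \thetaeq_P b$, while $\elemsk{P}{t} \in \baseM{a \cap b}$ means $a \cap b \notin P$ and $t \thetaeq_P (a \cap b)$.

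For the inclusion $\baseM{a} \cap \baseM{b} \subseteq \baseM{a \cap b}$, I would start from $t \thetaeq_P a$ and $t \thetaeq_P b$ and use transitivity to get $a \thetaeq_P b$. Lemma~\ref{lemma:cong} then gives $(a \setminus (a \cap b)) \lor (b \setminus (a \cap b)) \in P$, and since $P$ is an ideal this join lies in $P$ iff both joinands do; in particular $a \setminus (a \cap b) \in P$. Now applying Lemma~\ref{lemma:cong} a second time to $a$ and $a \cap b$, and using $a \cap (a \cap b) = a \cap b$ because $a \cap b \leq a$, the membership condition for $a \thetaeq_P (a \cap b)$ collapses exactly to $a \setminus (a \cap b) \in P$, which we have. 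Thus $a \thetaeq_P (a \cap b)$ and, by transitivity, $t \thetaeq_P (a \cap b)$. Finally $a \cap b \notin P$ follows because the zero-class of $\thetaeq_P$ is exactly $P$: were $a \cap b \in P$, then $t \thetaeq_P (a \cap b)$ would force $t \in P$, contradicting that $\elemsk{P}{t}$ is a point.

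For the reverse inclusion I would argue symmetrically and more directly. Given $a \cap b \notin P$ and $t \thetaeq_P (a \cap b)$, the inequality $a \cap b \leq a$ together with $a \cap b \notin P$ lets me invoke Lemma~\ref{lemma:s-less-than-t-not-in-P}(2) to conclude $a \cap b \thetaeq_P a$, and $\preceq$-lowerness of $P$ applied to $a \cap b \preceq a$ yields $a \notin P$. Transitivity with $t \thetaeq_P (a \cap b)$ then gives $t \thetaeq_P a$, so $\elemsk{P}{t} \in \baseM{a}$, and the same argument with $b$ gives $\elemsk{P}{t} \in \baseM{b}$.

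The only genuinely delicate point is establishing $a \cap b \notin P$ in the first inclusion. A naive attempt to derive $a \thetaeq_P (a \cap b)$ straight from Lemma~\ref{lemma:s-less-than-t-not-in-P}(2) is circular, since that lemma requires $a \cap b \notin P$ as a hypothesis; the way around this is the direct computation with Lemma~\ref{lemma:cong} above, which produces $a \thetaeq_P (a \cap b)$ without assuming anything about whether $a \cap b \in P$, after which the fact that the zero-class of $\thetaeq_P$ equals $P$ delivers $a \cap b \notin P$. The remaining ingredients — that membership of a join in an ideal is equivalent to membership of both joinands (using $u, v \preceq u \lor v$), and the simplification $a \cap (a \cap b) = a \cap b$ — are routine.
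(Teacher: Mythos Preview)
Your proof is correct and follows the same overall structure as the paper's, but your forward inclusion takes a longer route than necessary. Where you unpack $a \thetaeq_P b$ via Lemma~\ref{lemma:cong} to extract $a \setminus (a \cap b) \in P$ and then reassemble this into $a \thetaeq_P (a \cap b)$, the paper simply uses that $\thetaeq_P$ is a congruence for~$\cap$: from $a \thetaeq_P b$ one gets $a \cap b \thetaeq_P a \cap a = a$ in one step, and then $a \cap b \notin P$ follows as you say from the zero-class being~$P$. This sidesteps entirely the ``delicate point'' you flag about circularity with Lemma~\ref{lemma:s-less-than-t-not-in-P}(2). Your reverse inclusion matches the paper's argument essentially verbatim.
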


\begin{proof}
 For a prime ideal $P$ and $t \in A$ the statement $\elemsk{P}{t}
  \in \baseM{a} \cap \baseM{b}$ amounts to
  \begin{equation}
    \label{eq:baseM-preserve-cap-1}
    (a \not\in P) \land (b \not\in P) \land (t \thetaeq_P a) \land
    (t \thetaeq_P b),
  \end{equation}
  while $\elemsk{P}{t} \in \baseM{a \cap b}$ means
  \begin{equation}
    \label{eq:baseM-preserve-cap-2}
    (a \cap b \not\in P) \land (t \thetaeq_P a \cap b).
  \end{equation}
  If~\eqref{eq:baseM-preserve-cap-1} holds then $a\cap b \thetaeq_P a \cap a = a
  \thetaeq_P t$ which proves~\eqref{eq:baseM-preserve-cap-2}.

  To prove the converse, suppose~\eqref{eq:baseM-preserve-cap-2} holds. Because
  $a \cap b \not\in P$, $a \cap b \leq a$ and $a \cap b \leq b$, Lemma
  \ref{lemma:s-less-than-t-not-in-P} implies $a \thetaeq_P a \cap b \thetaeq_P b$
  which suffices for~\eqref{eq:baseM-preserve-cap-1}.
\end{proof}

The skew spectrum on its own contains too little information to act as the dual.
For example, both $\Sk{\two \times \two}$ and $\Sk{\three}$ are the discrete
space on two points. Recall that $\three$ is the right-handed skew algebra whose
set of elements is $\set{0,1,2}$ and whose $\DD$-classes are $\set{0}$ and
$\set{1,2}$. One part of the missing information is provided by the map $q_A :
\Sk{A} \to \St{A}$ defined by
\begin{equation*}
  q_A(\elemsk{P}{t}) = \D{P},
\end{equation*}
where we used the shorthand $\St{A} = \St{\D{A}}$.

\begin{proposition}
  The map $q_A : \Sk{A} \to \St{A}$ is onto and étale.
\end{proposition}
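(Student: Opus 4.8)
The plan is to prove two separate claims: that $q_A$ is surjective, and that it is étale (a local homeomorphism). Surjectivity should be the easy part. Given any prime ideal of $\D{A}$, it pulls back to a prime ideal $P$ in $A$ (prime ideals of $A$ correspond bijectively to prime ideals of $\D{A}$, as noted in the preliminaries), and since $P$ is non-trivial there is some $t \notin P$, giving a point $\elemsk{P}{t} \in \Sk{A}$ with $q_A(\elemsk{P}{t}) = \D{P}$. So every point of $\St{A}$ is hit.

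For the étale property, the natural strategy is to exhibit, for each point $\elemsk{P}{t} \in \Sk{A}$, a basic open neighborhood on which $q_A$ restricts to a homeomorphism onto an open subset of $\St{A}$. Since $t \notin P$, the point lies in $\baseM{t}$, so $\baseM{t}$ is the obvious candidate neighborhood. First I would identify the image: I expect $q_A(\baseM{t}) = \baseN{\dclass{t}}$, the basic copen of $\St{A}$ determined by the $\DD$-class of $t$, which is open. The key computation is then to show that the restriction $\restricted{q_A}{\baseM{t}} : \baseM{t} \to \baseN{\dclass{t}}$ is a bijection. The crucial injectivity step uses Lemma~\ref{lemma:element-max-ideal}: if $\elemsk{P}{t}$ and $\elemsk{Q}{t}$ both lie in $\baseM{t}$ and have the same image, then $P = Q$ (since $\D{P} = \D{Q}$ forces $P = Q$ by the bijection with $\D{A}$), and both are represented by $t$, so they are literally equal. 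Surjectivity of the restriction onto $\baseN{\dclass{t}}$ follows because any prime $Q$ with $\dclass{t} \notin \D{Q}$, i.e.\ $t \notin Q$, yields the point $\elemsk{Q}{t} \in \baseM{t}$.

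To finish, I would check that the restriction is continuous and open, hence a homeomorphism. Continuity amounts to verifying that preimages of basic copens $\baseN{\dclass{a}}$ intersected with the image are basic opens in $\baseM{t}$; openness amounts to showing that $q_A$ sends a basic open $\baseM{a} \subseteq \baseM{t}$ (equivalently, of the form $\baseM{t \cap a} = \baseM{t} \cap \baseM{a}$, using Lemma~\ref{lemma:baseM-preserves-cap}) to a basic copen of the form $\baseN{\dclass{a}}$. Both directions reduce to the bookkeeping identity that $\elemsk{P}{a} \in \baseM{a}$ iff $a \notin P$ iff $\dclass{a} \notin \D{P}$, so that the basic opens of $\Sk{A}$ downstairs and upstairs match up cleanly under $q_A$.

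The main obstacle I anticipate is making the identification $q_A(\baseM{t}) = \baseN{\dclass{t}}$ precise and showing the restriction is genuinely a \emph{bijection} rather than merely a surjection with small fibers: the subtlety is that distinct basic opens $\baseM{a}$ and $\baseM{b}$ can project to the same copen when $\dclass{a} = \dclass{b}$, so the fibers of $q_A$ are generally nontrivial (the whole point of the skew spectrum), yet when restricted to a \emph{single} $\baseM{t}$ the fiber collapses to one point because every element of $\baseM{t}$ is forced to be represented by the \emph{same} $t$. Verifying that this collapse is exactly what the equivalence $t \thetaeq_P u$ in the definition of $\elemsk{P}{t}$ guarantees, using the second part of Lemma~\ref{lemma:element-max-ideal}, is where the real content lies.
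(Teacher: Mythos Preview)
Your proposal is correct and follows essentially the same approach as the paper: both identify the basic opens $\baseM{t}$ as the neighborhoods on which $q_A$ restricts to a bijection onto the basic copen $\baseN{\dclass{t}}$, with surjectivity coming from non-triviality of prime ideals. The only organizational difference is that the paper first establishes global continuity of $q_A$ by computing $q_A^{-1}(\baseN{a}) = \bigcup_{b \preceq a} \baseM{b}$ (using Lemma~\ref{lemma:s-less-than-t-not-in-P}(2) for the nontrivial inclusion), and then notes the restriction to $\baseM{a}$ is a continuous bijection onto $\baseN{a}$, whereas you fold the continuity and openness checks into the local homeomorphism verification; both routes are valid and use the same ingredients.
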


\begin{proof}
  Because prime ideals are non-trivial the map $q_A$ is onto. To show that
  $q_A$ is continuous, we prove
  \begin{equation}
    \label{eq:inverse-image-baseN-open}
    q_A^{-1}(\baseN{a})=\bigcup_{b \preceq a} \baseM{b},
  \end{equation}
  where we used the shorthand $\baseN{a} = \baseN{\dclass{a}}$. For one
  inclusion, observe that $b \preceq a$ implies $\dclass{b} \leq \dclass{a}$ and
  hence $q_A(\baseM{b}) = \baseN{b} \subseteq \baseN{a}$. For the other inclusion,
  suppose $q_A(\elemsk{P}{t}) \in \baseN{a}$. Then $t \land a \land t \not\in P$
  because $a \not\in P$ and $t \not\in P$, and by
  Lemma~\ref{lemma:s-less-than-t-not-in-P} we get $t \thetaeq_P t \land a \land
  t$ from which $\elemsk{P}{t} \in \baseM{t \land a \land t}$ follows.

  Finally, $q_A$ is étale because its restriction to a basic open set
  $\baseM{a}$ is a (continuous) bijection onto the basic open set $\baseN{a}$.
\end{proof}

\begin{corollary}
  The skew spectrum $\Sk{A}$ is a Boolean space.
\end{corollary}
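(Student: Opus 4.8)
The plan is to exploit the étale map $q_A : \Sk{A} \to \St{A}$ from the preceding proposition together with the fact that $\St{A}$ is a Boolean space. Recall that $q_A$ carries each basic open $\baseM{a}$ homeomorphically onto the copen set $\baseN{a}$ of $\St{A}$, so every $\baseM{a}$ is compact. Since the sets $\baseM{a}$ form a base and every point $\elemsk{P}{t}$ lies in $\baseM{t}$, this already exhibits a compact neighborhood of each point, giving local compactness. Moreover, once we know that $\Sk{A}$ is Hausdorff, each compact $\baseM{a}$ is closed, hence clopen, and a base of clopens witnesses zero-dimensionality. Thus the whole statement reduces to proving that $\Sk{A}$ is \emph{Hausdorff}, which I expect to be the one genuine obstacle: local homeomorphisms need not have Hausdorff total space even over a Hausdorff base (as the line with two origins shows), so this step must use the algebra and not merely the topology.

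To separate two distinct points $\elemsk{P}{t}$ and $\elemsk{Q}{u}$ I distinguish two cases. If $P \neq Q$ then $\D{P} \neq \D{Q}$ (because $P \mapsto \D{P}$ is a bijection on prime ideals), so $q_A(\elemsk{P}{t}) \neq q_A(\elemsk{Q}{u})$; using that $\St{A}$ is Hausdorff and that $q_A$ is continuous, the $q_A$-preimages of disjoint open neighborhoods of $\D{P}$ and $\D{Q}$ are disjoint open neighborhoods of the two points. The delicate case, and the crux of the argument, is $P = Q$ with $t \not\thetaeq_P u$, that is, two points lying in the same fiber.

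For this case the plan is to produce disjoint \emph{basic} neighborhoods by choosing good representatives. First I would show $t \cap u \in P$: if instead $t \cap u \not\in P$, then since $t \cap u \leq t$ and $t \cap u \leq u$, Lemma~\ref{lemma:s-less-than-t-not-in-P} gives $t \thetaeq_P t \cap u \thetaeq_P u$, contradicting $t \not\thetaeq_P u$. Now set $a = t \setminus (t \cap u)$ and $b = u \setminus (t \cap u)$. Because $t \cap u \in P$ and $t \cap u \leq t$, Lemma~\ref{lemma:s-less-than-t-not-in-P} yields $t \thetaeq_P a$, so $\elemsk{P}{t} = \elemsk{P}{a} \in \baseM{a}$, and symmetrically $\elemsk{P}{u} \in \baseM{b}$. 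Finally I would verify $a \cap b = 0$: the element $a$ is the complement of $t \cap u$ inside the Boolean algebra $t \land A \land t$, so $a \cap (t \cap u) = 0$, while $a \cap b \leq a$ and $a \cap b \leq t \cap u$ (as $a \cap b$ is a lower bound of both $t$ and $u$), whence $a \cap b \leq a \cap (t \cap u) = 0$. By Lemma~\ref{lemma:baseM-preserves-cap} this gives $\baseM{a} \cap \baseM{b} = \baseM{a \cap b} = \baseM{0} = \emptyset$, so $\baseM{a}$ and $\baseM{b}$ are the required disjoint neighborhoods. With Hausdorffness established, the compactness of each $\baseM{a}$ upgrades it to a clopen set, and local compactness and zero-dimensionality follow as explained above, so $\Sk{A}$ is a Boolean space.
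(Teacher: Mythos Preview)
Your proof is correct and follows essentially the same route as the paper: separate points in distinct fibers by pulling back from Hausdorff $\St{A}$, and for two points $\elemsk{P}{t} \neq \elemsk{P}{u}$ in the same fiber use the basic opens $\baseM{t \setminus (t\cap u)}$ and $\baseM{u \setminus (t\cap u)}$, with disjointness coming from Lemma~\ref{lemma:baseM-preserves-cap} and membership from Lemma~\ref{lemma:int-in-P}. Your derivation of $t\cap u \in P$ (directly via part~(2) of Lemma~\ref{lemma:s-less-than-t-not-in-P}) is a slight streamlining of the paper's route through Lemma~\ref{lemma:cong} and part~(1), and you spell out the local compactness and zero-dimensionality that the paper summarizes as ``lifted by the \'etale map,'' but the argument is otherwise the same.
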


\begin{proof}
  Local compactness and zero-dimensionality are lifted from $\St{A}$
  to $\Sk{A}$ by the étale map $q_A$. To see that $\Sk{A}$ is
  Hausdorff, let $\elemsk{P}{s}$ and $\elemsk{Q}{t}$ in $\Sk{A}$ be
  two distinct points in $\Sk{A}$.

  Consider the case $P = Q$. We claim that $\baseM{s \setminus (s \cap t)}$ and
  $\baseM{t \setminus (s \cap t)}$ are disjoint and are neighborhoods of $s$ and
  $t$, respectively. Disjointness follows by
  Lemma~\ref{lemma:baseM-preserves-cap} from the fact that $(t \setminus (s \cap
  t)) \cap (s \setminus (s \cap t)) = 0$. From $\lnot (t \thetaeq_P s)$ we
  conclude by Lemma \ref{lemma:cong} that $(s \setminus (s \cap t)) \lor (t
  \setminus (s \cap t)) \not\in P$, thus $s \setminus (s \cap t) \not\in P$ or
  $t \setminus (s \cap t) \not\in P$. In either case $s \cap t\in P$ by (1) of Lemma
  \ref{lemma:element-max-ideal}, therefore $s \thetaeq_P s \setminus (s\cap t)$
  by (3) of Lemma \ref{lemma:int-in-P}, which proves $\elemsk{P}{s}\in \baseM{s
    \setminus (s\cap t)}$, as claimed. We similarly show that $\elemsk{P}{t} \in
  \baseM{t \setminus (s \cap t)}$.

  Consider the case $P \neq Q$. Because $\St{A}$ is Hausdorff there exist
  disjoint basic open nehigborhoods $\baseN{a}$ and $\baseN{b}$ of $q_A(P)$ and
  $q_A(Q)$, respectively. Thus $q_A^{-1}(\baseN{a})$ and $q_A^{-1}(\baseN{b})$
  are disjoint and are open neighborhoods of $\elemsk{P}{s}$ and
  $\elemsk{Q}{t}$, respectively.
\end{proof}

Because Boolean spaces are zero-dimensional, the étale map $q_A : \Sk{A} \to
\St{A}$ has more sections that one would normally expect.

\begin{proposition}
  \label{prop:copens-have-sections}
  The étale map $q_A : \Sk{A} \to \St{A}$ has a section above every
  copen set, passing through a prescribed point above the copen set.
\end{proposition}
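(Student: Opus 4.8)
The plan is to build the section explicitly, leaning on the fact established in the previous proposition that $q_A$ restricts to a homeomorphism $\baseM a \to \baseN a$ for every $a \in A$. Reading this backwards, each basic open $\baseM a$ is the image of a canonical section over the basic copen $\baseN a$, namely $\D P \mapsto \elemsk P a$ defined for those $P$ with $a \notin P$. So over a \emph{basic} copen we already have sections for free; the work is to assemble one over an arbitrary copen that moreover hits the prescribed point.

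First I would reduce to the basic case. Since $\St A = \St{\D A}$ is a Boolean space, copens form a base, so a copen $K$ is a finite union of basic copens; and because $\baseN{a} \cup \baseN{b} = \baseN{a \lor b}$ (a prime ideal contains $a \lor b$ iff it contains both $a$ and $b$), that finite union is again of the form $\baseN c$. Thus $K = \baseN c$ for some $c \in A$. Writing the prescribed point as $\elemsk{P_0}{t}$ with $t \notin P_0$, its lying above $K$ means $\D{P_0} \in \baseN c$, i.e.\ $c \notin P_0$.

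The naive attempt is to take the single sheet $\baseM c$, which is a section over all of $K = \baseN c$; the trouble is that it sends $\D{P_0}$ to $\elemsk{P_0}{c}$, which need not be the prescribed point $\elemsk{P_0}{t}$. To correct this I would patch in the sheet $\baseM t$ near $P_0$. As $t \notin P_0$, the copen $\baseN t$ is a neighbourhood of $\D{P_0}$, and since copens in a Boolean space are clopen, $K$ is the disjoint union of the clopen pieces $K \cap \baseN t$ and $K \setminus \baseN t$, with $\D{P_0}$ in the first. I then define $s$ to agree with $\D P \mapsto \elemsk P t$ on $K \cap \baseN t$ (legitimate there, since $t \notin P$) and with $\D P \mapsto \elemsk P c$ on $K \setminus \baseN t$ (legitimate, since $K = \baseN c$).

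What remains is routine verification. That $q_A \circ s = \id K$ is immediate from $q_A(\elemsk P t) = \D P = q_A(\elemsk P c)$, and $s$ hits the prescribed point because $\D{P_0}$ lies in the first piece, giving $s(\D{P_0}) = \elemsk{P_0}{t}$. The only substantive point is continuity, and this is where zero-dimensionality does the real work: on each piece $s$ is the inverse of the homeomorphism $\restricted{q_A}{\baseM t}$ or $\restricted{q_A}{\baseM c}$, hence continuous, and because the two pieces are clopen in $K$ these restrictions paste to a continuous map on $K$. I expect the main obstacle to be precisely this patching—two different sheets generally disagree on an overlap, so one cannot simply glue them over an open set; the decomposition of $K$ into \emph{disjoint clopen} parts, available only because $\baseN t$ is clopen, is exactly what lets the two sheets be chosen independently without conflict.
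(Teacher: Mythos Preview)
Your argument is correct and follows the same two-step template as the paper: first produce \emph{some} section over the whole copen, then modify it on a small clopen neighbourhood of the base point so that it passes through the prescribed fibre element. The clopen patching you describe is exactly the mechanism the paper relies on.

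The one noteworthy difference is in how the initial section is obtained. You use the algebraic structure directly: since $\baseN{a}\cup\baseN{b}=\baseN{a\lor b}$, every copen in $\St{A}$ is already of the form $\baseN{c}$ for a single $c\in A$, and the sheet $\baseM{c}$ sections all of it at once. The paper instead argues more abstractly, noting that only surjectivity of $q_A$ and zero-dimensionality of the base are needed: one covers the copen by finitely many small copens that admit local sections, refines to a pairwise disjoint copen partition, and glues. Your route is shorter for $q_A$ specifically; the paper's buys a little extra generality, since the same argument applies verbatim to any surjective étale map between Boolean spaces --- and this is how the result is used later, for a general skew Boolean space $p:E\to B$, in the proof of Proposition~\ref{prop:p-reflection}.
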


\begin{proof}
  More precisely, the proposition claims that given a copen set $U \subseteq
  \St{A}$ and a point $x \in \Sk{A}$ such that $q_A(x) \in U$, there is a copen
  section above~$U$ containing~$x$. For the proof we only need surjectivity of
  $q_A$ and zero-dimensionality of $\St{A}$. By surjectivity the copen set $U$
  can be decomposed into pairwise disjoint copen sets $U_1, \ldots, U_n$, each
  of which has a section~$s_i$. The sections can be glued together into a
  section~$s$ above~$U$. To make sure that $s$ passes through a prescribed
  point~$x$, just change it suitably on a small copen neighborhood of $q_A(x)$.
\end{proof}

\begin{proposition}
  \label{prop:baseM-all-copen-sections}
  The copen sections of $q_A : \Sk{A} \to \St{A}$ are precisely the sets
  $\baseM{a}$ with $a \in A$.
\end{proposition}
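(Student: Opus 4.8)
The plan is to prove the two inclusions separately: first that every $\baseM{a}$ is a copen section, then that every copen section arises this way.

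For the first direction I would invoke the fact, established in the proof that $q_A$ is étale, that $q_A$ restricts to a continuous bijection $\baseM{a} \to \baseN{a}$; since étale maps are open, this restriction is a homeomorphism onto the copen set $\baseN{a}$. Hence $\baseM{a}$ is open by definition of the topology on $\Sk{A}$, is compact as the homeomorphic image of the compact $\baseN{a}$, and meets each fibre at most once by injectivity of $q_A$ on $\baseM{a}$. By the two equivalent views of copen sections recorded earlier, $\baseM{a}$ is therefore a copen section.

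For the converse, let $S$ be a copen section, regarded as a copen subset of $\Sk{A}$ meeting each fibre at most once. Since the sets $\baseM{a}$ form a base and $S$ is open, $S$ is a union of such sets, and by compactness a finite subunion already covers it: $S = \baseM{a_1} \cup \cdots \cup \baseM{a_n}$. I would then merge these into a single basic open by repeatedly applying the following claim: if $\baseM{a} \cup \baseM{b}$ meets each fibre at most once, then $\baseM{a} \cup \baseM{b} = \baseM{a \lor b}$. Granting the claim, a straightforward induction, in which each partial union is again a section because it is a subset of $S$, yields $S = \baseM{a_1 \lor \cdots \lor a_n}$, as required.

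The heart of the argument, and the step I expect to be the main obstacle, is this merging claim, which I would prove by the congruence calculus of Lemma~\ref{lemma:cong}, using that the zero-class of $\theta_P$ is exactly $P$ and that $0$ is neutral for $\lor$. Membership of a point $\elemsk{P}{t}$ in either side is governed by which of $a, b$ lie outside $P$; from $a \preceq a \lor b$ (an absorption computation) and the fact that ideals are $\preceq$-lower, $a \lor b \notin P$ forces $a \notin P$ or $b \notin P$. When exactly one lies outside $P$, say $a \notin P$ and $b \in P$, the relation $b \thetaeq_P 0$ gives $a \lor b \thetaeq_P a$, so $\elemsk{P}{a \lor b} = \elemsk{P}{a}$; the symmetric case is identical. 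When both $a, b \notin P$, this is precisely where the section hypothesis enters: meeting each fibre at most once forces $\elemsk{P}{a} = \elemsk{P}{b}$, i.e. $a \thetaeq_P b$, whence $a \lor b \thetaeq_P a$ by idempotency of $\lor$. Assembling these cases gives both $\baseM{a} \cup \baseM{b} \subseteq \baseM{a \lor b}$ and the reverse inclusion, completing the claim. I would flag explicitly that the section hypothesis is genuinely needed, since without it $\baseM{a} \cup \baseM{b}$ need not be a section at all, let alone a single basic open.
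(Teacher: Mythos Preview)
Your proof is correct and follows essentially the same route as the paper: show each $\baseM{a}$ is a copen section via the homeomorphism onto $\baseN{a}$, write an arbitrary copen section as a finite union of basic opens by compactness, and reduce to the two-term merging claim $\baseM{a} \cup \baseM{b} = \baseM{a \lor b}$ under the section hypothesis, proved by the same case split on which of $a,b$ lie in~$P$.

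One small slip: the implication ``$a \lor b \notin P$ forces $a \notin P$ or $b \notin P$'' does not follow from $a \preceq a \lor b$ and downward closure of ideals (that gives the converse direction, $a \notin P \Rightarrow a \lor b \notin P$, which you also need). The implication you want comes instead from ideals being closed under finite joins. The conclusion is right, only the cited reason is off.
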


\begin{proof}
  We already know that each $\baseM{a}$ is a copen section because $q_A$  maps it  homeomorphically onto the copen set $\baseN{a}$. Conversely, let $V$
  be a copen section in $\Sk{A}$. It is a finite union of basic copen sets $V =
  \baseM{a_1} \cup \cdots \cup \baseM{a_n}$. We show that $V = \baseM{a_1 \lor \cdots
    \lor a_n}$. If $n = 1$ there is nothing to prove.

  Consider the case $n = 2$. The fact that $V = \baseM{a_1} \cup \baseM{a_2}$ is
  a section amounts to: for all prime ideals $P$ in $A$, if $a_1 \not\in P$ and
  $a_2 \not\in P$ then $a \thetaeq_P b$. Let us show that this implies
  $\baseM{a_1} \subseteq \baseM{a_1 \lor a_2}$. If $a_1 \not\in P$ then $a_1
  \lor a_2 \not\in P$. Either $a_2 \in P$ or $a_2 \not\in P$. In the first case
  $a_2 \thetaeq_P 0$ and so $a_1 \thetaeq_P a_1 \lor 0 \thetaeq_P a_1 \lor a_2$.
  In the second case, $a_2 \thetaeq_P a_1$ and so $a_1 \thetaeq_P a_1 \lor a_1
  \thetaeq_P a_1 \lor a_2$. We similarly show that $\baseM{a_2} \subseteq
  \baseM{a_1 \lor a_2}$, from which we get $\baseM{a_1} \cup \baseM{a_2}
  \subseteq \baseM{a_1 \lor a_2}$.
 
 To complete the case $n = 2$ we still have to prove $\baseM{a_1 \lor a_2}
 \subseteq \baseM{a_1} \cup \baseM{a_2}$. Suppose $a_1\lor a_2 \not\in P$. Then
 either $a_1\not\in P$ or $a_2 \not\in P$. We consider the case $a_1\not\in P$,
 the other one is similar. Either $a_2 \in P$ or $a_2 \not\in P$. In the first
 case $a_2 \thetaeq_P 0$ so $a_1 \thetaeq_P a_1 \lor 0 \thetaeq_P a_1 \lor a_2$.
 In the second case the assumption gives us $a_1 \thetaeq_P a_2$ and so
 $a_1\thetaeq_P a_1 \lor a_1 \thetaeq_P a_1 \lor a_2$.

 The cases $n > 2$ follow by repeated use of the case $n = 2$.
\end{proof}

Later on we will need to know how the map $a \mapsto \baseM{a}$ interacts with
the operations of~$A$. For this purpose we define the \emph{saturation}
operation $\sigma_A(U) = q_A^{-1}(q_A(U))$. Since $q_A$ is open, the
saturation of a copen is a clopen.

\begin{proposition}
  \label{prop:baseM-properties}
  The assignment $a \mapsto \baseM{a}$ is an order-isomorphism from a skew
  algebra~$A$, ordered by the natural partial order, onto the copen sections of
  $\Sk{A}$, ordered by subset inclusion, satisfying the identities
  \begin{equation*}
  \baseM{a \land b \land a} = \sigma_A(\baseM{b}) \cap \baseM{a}
  \quad\text{and}\quad
  \baseM{a \lor b \lor a} = \baseM{a} \cup (\baseM{b} \setminus \sigma_A(\baseM{a})).
  \end{equation*}
\end{proposition}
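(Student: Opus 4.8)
The plan is to establish three things in sequence: that $a \mapsto \baseM{a}$ is injective, that it is order-preserving and order-reflecting (hence an order-isomorphism onto its image, which we already know from Proposition~\ref{prop:baseM-all-copen-sections} is exactly the copen sections), and finally the two displayed identities. The injectivity and order properties should follow from the characterization of points of $\Sk{A}$ together with the lemmas about prime ideals, while the identities reduce to pointwise membership computations using Lemma~\ref{lemma:baseM-preserves-cap} and the description of saturation.

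Let me reason about the pieces. For the order-isomorphism part, I would first argue that $a \leq b$ implies $\baseM{a} \subseteq \baseM{b}$: if $\elemsk{P}{a} \in \baseM{a}$ then $a \notin P$, and since $a \leq b$ Lemma~\ref{lemma:s-less-than-t-not-in-P}(2) gives $a \thetaeq_P b$ (noting $a \cap b = a$), so $\elemsk{P}{a} = \elemsk{P}{b} \in \baseM{b}$. For the reverse direction I would show $\baseM{a} \subseteq \baseM{b}$ implies $a \leq b$, arguing contrapositively: if $a \not\leq b$ then $a \setminus (a \cap b) \neq 0$, and I would produce a prime ideal $P$ with $a \setminus(a\cap b) \notin P$ witnessing $\elemsk{P}{a} \notin \baseM{b}$. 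Since $\cap$ already computes greatest lower bounds (as noted in the proof of Theorem~\ref{theorem:partial-functions-band} for the concrete band, and here for $\Sk{A}$ via inclusion), an order-isomorphism automatically preserves the lattice-theoretic structure that the partial order determines; injectivity then follows from antisymmetry of $\leq$.

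For the two identities, I would verify them by unwinding membership. For $\baseM{a \land b \land a} = \sigma_A(\baseM{b}) \cap \baseM{a}$, observe that $\elemsk{P}{t} \in \sigma_A(\baseM{b})$ means $\D{P} \in q_A(\baseM{b})$, i.e. $b \notin P$ (since saturation only tests membership in the base $\St{A}$), while $\elemsk{P}{t} \in \baseM{a}$ means $a \notin P$ and $t \thetaeq_P a$. Conjoining, this should match $\elemsk{P}{t} \in \baseM{a \land b \land a}$, which requires $a \land b \land a \notin P$ and $t \thetaeq_P a \land b \land a$; here I would use that $a \land b \land a \notin P \iff (a \notin P \text{ and } b \notin P)$ because $P$ is prime and $a \land b \land a \preceq a$, together with $a \thetaeq_P a \land b \land a$ from Lemma~\ref{lemma:s-less-than-t-not-in-P}(2) when $b \notin P$. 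The join identity is dual and I would handle it by the same pointwise method, using that $\baseM{b} \setminus \sigma_A(\baseM{a})$ picks out $\elemsk{P}{t}$ with $b \notin P$ but $a \in P$, combining with $\baseM{a}$.

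**The main obstacle** I expect is the careful bookkeeping in the identities, specifically keeping straight the distinction between the base-level condition ($a \notin P$, which saturation sees) and the fiber-level condition ($t \thetaeq_P a$, which it does not), and confirming that the noncommutative expressions $a \land b \land a$ and $a \lor b \lor a$ behave correctly modulo $\thetaeq_P$. The join identity in particular involves the disjoint-union structure of $\Sk{A}$ over $\St{A}$, and I would want to check that the set-difference by the clopen saturation $\sigma_A(\baseM{a})$ genuinely removes exactly the fibers over $\baseN{a}$, so that the union with $\baseM{a}$ reassembles the section $\baseM{a \lor b \lor a}$ without overlap or omission.
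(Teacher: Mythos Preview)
Your proposal is correct and takes essentially the same approach as the paper: pointwise membership verification for the two identities using Lemma~\ref{lemma:s-less-than-t-not-in-P} and the description of $\sigma_A$, together with the fact that prime ideals separate nonzero elements for injectivity/order-reflection. The only difference is organizational: the paper proves injectivity first and then obtains the order-isomorphism via the chain $a\le b\iff a\cap b=a\iff\baseM{a\cap b}=\baseM{a}\iff\baseM{a}\cap\baseM{b}=\baseM{a}\iff\baseM{a}\subseteq\baseM{b}$ (using Lemma~\ref{lemma:baseM-preserves-cap} and injectivity), whereas you prove the two order implications directly and read off injectivity from antisymmetry.
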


\begin{proof}
 % SMO ZE V DOKAZU  {prop:baseM-all-copen-sections}:
 % We already observed that $q_A$ restricted to $\baseM{a}$ is a continuous
  %bijection onto $\baseN{a}$, hence $\baseM{a}$ really is a copen section
  %of~$q_A$.
 %
  If $\baseM{a} = \baseM{b}$ then $a \thetaeq_P b$ for all prime ideals $P$.
  Thus for any prime ideal $P$
  \begin{equation*}
    (a \setminus (a \cap b)) \lor (b \setminus (a \cap b)) \in P,
  \end{equation*}
  and so both $a \setminus (a \cap b)$ and $b \setminus (a \cap b)$ belong to $P$.
  Because the intersection of all prime ideals is $\set{0}$, it follows that
  \begin{equation*}
    a \setminus (a \cap b) = 0 = b \setminus (a \cap b).
  \end{equation*}
  Therefore, $\dclass{a} = \dclass{a \cap b} = \dclass{b}$, which is only
  possible if $a = b$. It follows that the assignment $a \mapsto \baseM{a}$ is
  injective, while its surjectivity follows from
  Proposition~\ref{prop:baseM-all-copen-sections}.

  That $a \mapsto \baseM{a}$ is an order isomorphism follows from the following
  chain of equivalences, where we use Lemma~\ref{lemma:baseM-preserves-cap} and
  injectivity of $a \mapsto \baseM{a}$ in the second and third step,
  respectively:
  \begin{multline*}
    a \leq b
    \iff a \cap b = a
    \iff \baseM{a \cap b} = \baseM{a} \\
    \iff \baseM{a} \cap \baseM{b} = \baseM{a}
    \iff \baseM{a} \subseteq \baseM{b}.
  \end{multline*}
  It remains to prove the two identites. For the first one, let
  $\elemsk{P}{a\land b\land a}\in \baseM{a\land b\land a}$. Then $a\land b\land
  a\not\in P$ which implies both $b\not\in P$ (and thus $\elemsk{P}{a\land
    b\land a}\in \sigma_A (\baseM{b})$) and $a\not\in P$. We have $a\land b\land
  a\leq a$ and $a\land b\land a\thetaeq_P a$ follows by Lemma
  \ref{lemma:s-less-than-t-not-in-P}. Hence $\elemsk{P}{a\land b\land a}\in
  \sigma_A (\baseM{b}) \cap \baseM{a}$.
 
 To prove the converse, let $\elemsk{P}{a}\in \sigma_A (\baseM{b})\cap \baseM{a}$. Hence $a\not\in P$
 and $b\not\in P$. So $a\land b\land a\not\in P$, $a\land b\land a\leq a$ and thus 
 $a\land b\land a\thetaeq_P a$
 by Lemma \ref{lemma:s-less-than-t-not-in-P}.
 
 To prove the second equality first assume that 
 $\elemsk{P}{a\lor b\lor a}\in \baseM{a\lor b\lor a}$. Hence $a\lor b\lor a\not\in P$. 
 If $a\not\in P$ then $a\thetaeq_P  a\lor b\lor a$ by Lemma  \ref{lemma:s-less-than-t-not-in-P}
 because always $a\leq a\lor b\lor a$, and $\elemsk{P}{a\lor b\lor a} \in \baseM{a} $
 follows. If $a\in P$ then $b\not\in P$ follows from $a\lor b\lor a\not\in P$. Thus  
 $a\lor b\lor a \thetaeq_P 0\lor b\lor 0\thetaeq_P b$ 
 and $\elemsk{P}{a\lor b\lor a}\in \baseM{b}\backslash \sigma_A (\baseM{a})$ follows.
 
 Finally, assume that $\elemsk{P}{s}\in \baseM{a}\cup (\baseM{b}\backslash \sigma_A (\baseM{a}))$. Then
 either $a\not\in P$ and $\elemsk{P}{s}=\elemsk{P}{a}$,  or $a\in P$,
 $b\not\in P$ and $\elemsk{P}{s}=\elemsk{P}{b}$. In the first case it follows
 that $a\lor b\lor a\not\in P$ and thus $a\thetaeq_P a\lor b\lor a$ by Lemma  
 \ref{lemma:s-less-than-t-not-in-P}. Therefore $\elemsk{P}{s}=\elemsk{P}{a}\in \baseM{a\lor b\lor a}$.
 In the second case it follows that $a\lor b\lor a\not\in P$ and
 $a\lor b\lor a\thetaeq_P 0\lor b\lor 0\thetaeq_P  b$. Again, 
$\elemsk{P}{s}=\elemsk{P}{b}\in \baseM{a\lor b\lor a}$.
\end{proof}

The attentive reader will point out that the étale map $q_A : \Sk{A} \to \St{A}$
cannot possibly be the topological dual of the skew algebra~$A$ because $A$
gives the same étale map as its opposite algebra (in which the operations
$\land$ and $\lor$ are the mirror version of those in~$A$). Indeed, the étale
map provides sufficient information only when~$A$ is right-handed (or
left-handed), as will be shown in Section~\ref{sec:skew-rh-duality-proofs}. We
shall consider the general case in Section~\ref{sec:dual-skew-algebr}.

Let $f : A \to A'$ be a homomorphism of skew algebras. As we explained in
Section~\ref{sec:duality-commutative}, the induced homomorphism $\D{A} \to
\D{A'}$ is dual to a proper partial map $\down{f} : \St{A'} \to \St{A}$ with an
open domain, which maps prime ideals to their inverse images, when defined.
There is also a map $\up{f} : \Sk{A'} \to \Sk{A}$ of the same kind between the
skew spectra. It is characterized by the requirement
\begin{equation*}
  \up{f} (\elemsk{P}{f(a)}) = \elemsk{\down{f}(P)}{a},
\end{equation*}
which uniquely determines the value of $\up{f}$, when defined, because
$a \thetaeq_{\down{f}(P)} b$ is equivalent to $f(a) \thetaeq_P f(b)$.
If $\up{f}(\elemsk{P}{f(a)})$ is defined then $f(a) \not\in P$, hence
$\up{f}$ is defined on the basic copen set $\baseM{f(a)}$, which shows
that the domain of definition of $\up{f}$ is open. Next, $\up{f}$ is
continuous and proper because $\up{f}^{-1}(\baseM{a}) = \baseM{f(a)}$
for all $a \in A$.

The square
\begin{equation}
  \label{eq:sk-up-down-square}
  \xymatrix@+1em{
    {\Sk{A}} \ar[d]_{q_A}
    &
    {\Sk{A'}}
    \ar@_{->}[l]_{\up{f}}
    \ar[d]^{q_{A'}}
    \\
    {\St{A}}
    &
    {\St{A'}}      
    \ar@_{->}[l]^{\down{f}}
  }
\end{equation}
commutes. It is easy to see that the values of $q_A \circ \up{f}$ and $\down{f}
\circ q_B$ coincide whenever they are both defined, so we only show that they
have the same domain of definition. If $\down{f}(P)$ is defined then there is $a
\in A$ such that $f(a) \not\in P$, in which case $\up{f}$ is defined at
$\elemsk{P}{f(a)}$ and $q_B(\elemsk{P}{f(a)}) = P$. Conversely, if
$\up{f}(\elemsk{P}{f(a)}$ is defined then $\down{f}(q_B(\elemsk{P}{f(a)})) =
\down{f}(P)$ is defined because $f(a) \not\in P$.

\begin{lemma}
  \label{lemma:dagger-bijective-on-fibers}
  The map $\up{f}$ is a bijection on fibers, i.e., given any $P \in
  \dom{\down{f}}$, $\up{f}$ maps $\Sk{B}_P \cap \dom{\up{f}}$
  bijectively onto the fiber $\Sk{A}_{\down{f}(P)}$.
\end{lemma}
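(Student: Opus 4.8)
The plan is to reduce the whole statement to the congruence correspondence recorded just after Lemma~\ref{lemma:cong}, namely that $a \thetaeq_{f^{-1}(P)} b$ holds if and only if $f(a) \thetaeq_P f(b)$. Throughout, write $Q = \down{f}(P) = f^{-1}(P)$, which is a prime ideal of $A$ precisely because $P \in \dom{\down{f}}$. First I would pin down the two fibers as sets of congruence classes. The map $q_{A'}$ sends $\elemsk{Q'}{s}$ to $\D{Q'}$, and by the bijection between prime ideals of a skew algebra and prime ideals of its commutative reflection, the point $P$ of $\St{A'}$ is hit exactly at the points $\elemsk{P}{s}$ with $s \notin P$. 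Thus $\Sk{A'}_P = \set{\elemsk{P}{s} \such s \notin P}$, and likewise $\Sk{A}_Q = \set{\elemsk{Q}{t} \such t \notin Q}$, i.e.\ the non-zero $\theta_Q$-classes. The commuting square~\eqref{eq:sk-up-down-square} already guarantees that $\up{f}$ carries $\Sk{A'}_P \cap \dom{\up{f}}$ into the fiber $\Sk{A}_Q$, so it remains only to check that this restriction is a bijection.

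For surjectivity I would argue directly. Given $\elemsk{Q}{t} \in \Sk{A}_Q$ we have $t \notin Q = f^{-1}(P)$, hence $f(t) \notin P$, so $\elemsk{P}{f(t)}$ is a genuine point of $\Sk{A'}_P$; it lies in $\dom{\up{f}}$ because $\up{f}$ is defined on $\baseM{f(t)}$, and the defining equation gives $\up{f}(\elemsk{P}{f(t)}) = \elemsk{Q}{t}$. Thus every point of the target fiber is attained.

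For injectivity, suppose $\up{f}(\elemsk{P}{s_1}) = \up{f}(\elemsk{P}{s_2})$ with both points in $\Sk{A'}_P \cap \dom{\up{f}}$. Being in the domain, each $s_i$ satisfies $s_i \thetaeq_P f(a_i)$ for some $a_i$ with $f(a_i) \notin P$, so the defining equation yields $\up{f}(\elemsk{P}{s_i}) = \elemsk{Q}{a_i}$, and the hypothesis becomes $a_1 \thetaeq_Q a_2$. Applying the congruence correspondence gives $f(a_1) \thetaeq_P f(a_2)$, whence $s_1 \thetaeq_P f(a_1) \thetaeq_P f(a_2) \thetaeq_P s_2$, that is $\elemsk{P}{s_1} = \elemsk{P}{s_2}$.

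I do not expect a serious obstacle here: all three steps are essentially bookkeeping once the congruence correspondence is in hand. The only point demanding care is the initial identification of the fibers with sets of congruence classes, since $q_A$ records merely the $\DD$-class of the underlying prime ideal; this is exactly what is resolved by the bijection between the prime ideals of $A$ and those of $\D{A}$, which collapses the apparent ambiguity and lets us treat the fiber over $P$ as the non-zero quotient $A'/\theta_P$ and the fiber over $Q$ as $A/\theta_Q$. The map $\up{f}$ is then visibly the inverse of the injection $[a]_{\theta_Q} \mapsto [f(a)]_{\theta_P}$ on its image, which is conceptually why it is bijective on fibers.
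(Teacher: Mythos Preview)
Your proof is correct and follows essentially the same approach as the paper's: both arguments use the congruence correspondence $a \thetaeq_{f^{-1}(P)} b \iff f(a) \thetaeq_P f(b)$ for injectivity and the observation that $t \notin f^{-1}(P)$ implies $f(t) \notin P$ for surjectivity. The only difference is cosmetic---the paper writes domain points directly as $\elemsk{P}{f(a)}$ rather than passing through an arbitrary representative $s_i$ first.
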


\begin{proof}
  Consider any $P \in \dom{\down{f}}$. The Lemma states that $\up{f}$
  is a bijective map between the sets $\set{\elemsk{P}{f(a)} \such
    f(a) \not\in P}$ and $\set{\elemsk{\down{f}(P)}{a} \such a \not\in
    \down{f}(P)}$.

  For injectivity, suppose $\up{f}(\elemsk{P}{f(a)}) =
  \up{f}(\elemsk{P}{f(a')})$ where $f(a) \not\in P$ and $f(a') \not\in P$.
  By the definition of $\up{f}$ it follows that
  $\elemsk{\down{f}(P)}{a} = \elemsk{\down{f}(P)}{a'}$, which is equivalent to
  $a \thetaeq_{\down{f}(P)} a'$, and hence $f(a) \thetaeq_P f(a')$. This
  establishes the fact that $\elemsk{P}{f(a)} = \elemsk{P}{f(a')}$.

  For surjectivity, pick any $\elemsk{\down{f}(P)}{a}$ where $a\not\in
  \down{f}(P)$. It follows that $f(a) \not\in P$, so $\up{f}$ is
  defined at $\elemsk{P}{f(a)}$ and maps it to $\elemsk{\down{f}(P)}{a}$.
\end{proof}

\noindent
In view of the previous lemma one might contemplate turning $\up{f}$ around, so
that instead of a partial map which is bijective on fibers we would get a total
map which is injective on fibers. The trouble is that the inverted map need not
be continuous, so the topological nature of $\up{f}$ must be obscured by a more
complex condition.

\subsection{From spaces to algebras}
\label{sec:spaces-to-algebras}

As we already indicated, the original algebra~$A$ cannot always be
reconstructed from $q_A : \Sk{A} \to \St{A}$. However, if $A$ is
\emph{right-handed} then $q_A : \Sk{A} \to \St{A}$ carries all the
information needed, so we consider this case first.

We call a surjective étale map $p : E \to B$ between Boolean spaces a
\emph{skew Boolean space}. The corresponding right-handed skew algebra
$A_p$ consists of copen sections of~$p$, i.e., an element of $A_p$ is
a copen subset $S \subseteq E$ such that $\restricted{p}{S}$ is
injective. To describe the right-handed skew structure on $A_p$,
recall the saturation operation $\sigma_p(S) = p^{-1}(p(S))$, and
define for $S, R \in A_p$
\begin{align*}
  0 &= \emptyset,\\
  S \land R &= \sigma_p(S) \cap R, \\
  S \lor R  &=  S \cup (R - \sigma_p(S)), \\
  S \setminus R &= S - \sigma_p(R), \\
  S \cap R &= S \cap R.
\end{align*}
It is clear that these operations map back into $A_p$. For example, $S \cap R$
is a section, and it is copen because it is the intersection of two copen
subsets of the Hausdorff space~$E$.

It is not difficult to check that the above operations form a
skew algebra with bare hands. An alternative, more elegant way of establishing the
skew structure is to view the elements of $A_p$ as
partial maps $s, r: B \parto E$ and exhibit $A_p$ as a subalgebra of the 
right-handed skew algebra $\PP{B}{E}$ as described in Section~\ref{sec:skew-boolean-algebras}.

The construction of the skew algebra $A_p$ induces the usual
construction of a generalized Boolean algebra via the lattice
reflection, as follows.

\begin{proposition}
  \label{prop:p-reflection}
  Let $p : E \to B$ be a skew Boolean space and let $B^{*}$ be the
  Boolean algebra of copen subsets of~$B$. The map $A_p \to B^{*}$
  defined by $S \mapsto p(S)$ is the lattice reflection of~$A_p$.
\end{proposition}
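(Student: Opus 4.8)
The plan is to exhibit $\phi \colon A_p \to B^{*}$, $\phi(S) = p(S)$, as a surjective homomorphism of skew lattices onto the lattice $B^{*}$ whose kernel congruence is exactly Green's relation $\DD$. Since by Leech's First Decomposition Theorem $\DD$ is the finest congruence for which the quotient is a lattice, this identifies $\phi$, up to the canonical isomorphism $\D{A_p} \cong B^{*}$, with the reflection map $A_p \to \D{A_p}$, which is precisely what it means for $\phi$ to be the lattice reflection. First I would check that $\phi$ is well defined: if $S$ is a copen section then $\restricted{p}{S}$ is a homeomorphism onto $p(S)$ and, because étale maps are open, $p(S)$ is copen in $B$, hence an element of $B^{*}$.

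Next I would verify that $\phi$ is a skew-lattice homomorphism, which is almost forced by the definitions, since the operations on $A_p$ are tailored so that $\phi$ intertwines them with the ordinary set operations of $B^{*}$. From $\sigma_p(S) = p^{-1}(p(S))$ one checks $p(\sigma_p(S) \cap R) = p(S) \cap p(R)$, giving $\phi(S \land R) = \phi(S) \cap \phi(R)$; similarly $p(S \cup (R - \sigma_p(S))) = p(S) \cup p(R)$ and $p(S - \sigma_p(R)) = p(S) - p(R)$ yield preservation of $\lor$ and $\setminus$, while $\phi(0) = \emptyset$. Surjectivity of $\phi$ is immediate from Proposition~\ref{prop:copens-have-sections} — whose proof uses only surjectivity of the étale map and zero-dimensionality of the base, and hence applies to any skew Boolean space — because every copen $U \subseteq B$ is the image of a copen section lying above $U$. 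I would note in passing that $\phi$ need \emph{not} preserve $\cap$, as already $p(S \cap R)$ can be a proper subset of $p(S) \cap p(R)$; this is exactly why the claim concerns only the lattice reflection and not a reflection respecting intersections.

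It remains to compute the kernel. Because $\phi$ is a skew-lattice homomorphism into a lattice it factors through $A_p \to \D{A_p}$, so $\DD \subseteq \ker\phi$. For the reverse inclusion I would compute the natural preorder on $A_p$ directly. Using the displayed operations together with $\sigma_p(T) = p^{-1}(p(T))$ twice,
\begin{equation*}
  S \land R \land S = p^{-1}(p(S) \cap p(R)) \cap S ,
\end{equation*}
which equals $S$ if and only if $p(S) \subseteq p(R)$; that is, $S \preceq R \iff p(S) \subseteq p(R)$. Consequently $\phi(S) = \phi(R)$, i.e.\ $p(S) = p(R)$, forces $S \preceq R$ and $R \preceq S$, hence $S \Drel R$, giving $\ker\phi \subseteq \DD$. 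With $\ker\phi = \DD$ and $\phi$ surjective, the induced map $\D{A_p} \to B^{*}$ is a bijective lattice homomorphism, hence an isomorphism, so $\phi$ is the lattice reflection of $A_p$.

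The main obstacle is the preorder computation of the last paragraph: the homomorphism identities are essentially built into the operations of $A_p$, and surjectivity and the universal property of $\DD$ are citations, so the one place requiring genuine care is tracking the two saturations inside $S \land R \land S$ and recognising that its equality with $S$ reduces precisely to the containment $p(S) \subseteq p(R)$ of images.
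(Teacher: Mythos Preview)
Your proof is correct and follows essentially the same route as the paper: surjectivity via Proposition~\ref{prop:copens-have-sections}, the (easy) verification that $S \mapsto p(S)$ is a lattice homomorphism, and identification of the kernel with~$\DD$ by computing $S \land R \land S = \sigma_p(S) \cap \sigma_p(R) \cap S$ and reading off $S \preceq R \iff p(S) \subseteq p(R)$. Your version is slightly more explicit about preservation of each operation and the non-preservation of~$\cap$, but the argument is the same.
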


\begin{proof}
  By Proposition~\ref{prop:copens-have-sections} the map~$S \mapsto
  p(S)$ is surjective, and it is easily seen to be a lattice
  homomorphism. Thus we only have to check $p(S) = p(R)$ is equivalent
  to $S \mathbin{\DD} R$, where $S$ and $R$ are copen sections of~$p$.
  This follows from
  \begin{multline*}
    S \preceq R
    \iff S \land R \land S = S
    \iff \sigma_p(S) \cap \sigma_p(R) \cap S = S \\
    \iff S \subseteq \sigma_p(R)
    \iff p(S) \subseteq p(R).
  \end{multline*}
\end{proof}

We turn attention to morphisms next. We already know that a
homomorphism $f : A \to A'$ of skew algebras induces a commutative
square~\eqref{eq:sk-up-down-square} in which the horizontal partial
maps are proper, continuous and have open domains of definition, and
that the top map is bijective on fibers by
Lemma~\ref{lemma:dagger-bijective-on-fibers}. So we define a morphism
$(g,h)$ between skew Boolean spaces $p : E \to B$
and $p' : E' \to B'$ to be a commutative diagram
\begin{equation}
  \label{eq:catS-morphism}
  \xymatrix@+1em{
    {E}
    \ar@^{->}[r]^{g}
    \ar[d]_{p}
    &
    {E'}
    \ar[d]^{p'}
    \\
    {B}
    \ar@^{->}[r]_{h}
    &
    {B'}
  }
\end{equation}
in which $g$ and $h$ are proper continuous partial maps with open
domains of definition. Furthermore, we require that $g$ is a bijection
on fibers, in the sense that $g$ maps $E_x \cap \dom{g}$ bijectively
onto $E_{h(x)}$ for every $x \in \dom{h}$.

\begin{lemma}
  \label{lemma:morphism-reflection}
  If $(g,h)$ is a morphism between skew Boolean spaces $p : E \to B$
  and $p' : E' \to B'$ then $p(g^{-1}(S)) = h^{-1}(p'(S))$ for every
  copen section $S$ in $E'$.
\end{lemma}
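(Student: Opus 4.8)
The plan is to prove the set equality $p(g^{-1}(S)) = h^{-1}(p'(S))$ by establishing the two inclusions separately, using three ingredients: the commutativity $p' \circ g = h \circ p$ of the square~\eqref{eq:catS-morphism}, the fact that $S$ is a copen section of $p'$ (so that $\restricted{p'}{S}$ is injective and every point of $p'(S)$ has a unique preimage lying in $S$), and the hypothesis that $g$ is a bijection on fibers. Throughout I would keep in mind that $g$ and $h$ are partial while $p$ and $p'$ are total, so the commuting square is an equality of partial maps that also matches domains: $\dom{g} = p^{-1}(\dom{h})$.

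For the inclusion $p(g^{-1}(S)) \subseteq h^{-1}(p'(S))$, I would start from a point $x = p(e)$ with $e \in \dom{g}$ and $g(e) \in S$. Since $p'$ is total, commutativity of the square forces $x = p(e) \in \dom{h}$ and $h(x) = p'(g(e))$. As $g(e) \in S$, we get $h(x) \in p'(S)$, hence $x \in h^{-1}(p'(S))$. This direction is routine bookkeeping with partial-map domains.

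For the reverse inclusion, the idea is to reconstruct a witness in $g^{-1}(S)$ lying over a given $x$. Suppose $x \in \dom{h}$ and $h(x) \in p'(S)$. Because $S$ is a section, there is a unique $e' \in S$ with $p'(e') = h(x)$; this $e'$ lives in the fiber $E'_{h(x)}$. Here the hypothesis that $g$ restricts to a bijection from $E_x \cap \dom{g}$ onto $E'_{h(x)}$ enters decisively: its surjectivity yields $e \in E_x \cap \dom{g}$ with $g(e) = e'$. Then $p(e) = x$ and $g(e) = e' \in S$, so $e \in g^{-1}(S)$ and therefore $x = p(e) \in p(g^{-1}(S))$.

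I expect the step requiring the most care to be the reverse inclusion, and specifically the appeal to fiber-surjectivity of $g$: without it one could not guarantee that the unique section point $e'$ over $h(x)$ has a preimage in the fiber $E_x$ over $x$. The section property of $S$ is what pins down $e'$ uniquely, the bijection-on-fibers hypothesis is what transports it back along $g$, and commutativity of the square is what certifies that the transported point projects to the correct base point $x$.
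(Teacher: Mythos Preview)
Your argument is correct and follows essentially the same route as the paper's proof: the forward inclusion by commutativity of the square, the reverse inclusion by fiber-surjectivity of~$g$. One small inaccuracy worth noting: your parenthetical claim that commutativity forces $\dom{g} = p^{-1}(\dom{h})$ is not part of the definition of morphism here; that equality is precisely the \emph{saturated} condition introduced in Section~\ref{sec:variations}, and saturated morphisms form a proper subclass. Fortunately you never actually use the inclusion $p^{-1}(\dom{h}) \subseteq \dom{g}$---the forward direction only needs $\dom{g} \subseteq p^{-1}(\dom{h})$, and the reverse direction relies solely on fiber-surjectivity---so the proof stands as written. (The uniqueness of~$e'$ you invoke is true but not needed; existence suffices, as in the paper.)
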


\begin{proof}
  Let $S$ be a copen section in~$E'$. If $y \in p(g^{-1}(S))$ then there exists
  $x \in g^{-1}(S)$ such that $y = p(x)$. Then $h(y) = h(p(x)) = p'(g(x)) \in
  p'(S)$, so that $y \in h^{-1}(p'(S))$ and $p(g^{-1}(S)) \subseteq
  h^{-1}(p'(S))$ follows. On the other hand, if $h(y) \in p'(S)$ then
  $h(y) = p'(z)$ for some $z \in S$. Because $g$ is surjective on fibers there
  exists $x \in E_y$ with the property $g(x) = z$. Now, $y = p(x)\in
  p(g^{-1}(S))$ and we get $ h^{-1}(p'(S))\subseteq p(g^{-1}(S))$.
\end{proof}

We would like to construct a corresponding homomorphism $\homo{(g,h)} : A_{p'}
\to A_p$. For a copen section $S \subseteq E'$, the inverse image
$g^{-1}(S)$ is copen in $E$ because $g$ is continuous and proper, and
it is a section because~$g$ is injective on fibers. Thus we may define
$\homo{(g,h)}(S) = g^{-1}(S)$.

Let us show that $g^{-1}$ commutes with the saturation operations
$\sigma_p$ and $\sigma_{p'}$. If $S$ is a copen section in $E'$, then
$p(g^{-1}(S)) = h^{-1}(p'(S))$ by
Lemma~\ref{lemma:morphism-reflection}. Therefore
\begin{equation*}
  g^{-1}(\sigma_{p'}(S)) =
  p^{-1}(h^{-1}(p'(S)) =
  p^{-1}(p(g^{-1}(S)) =
  \sigma_p(g^{-1}(S)),
\end{equation*}
as claimed. The operations on $A_p$ and $A_{p'}$ are defined in terms of basic
set-theoretic operations and the saturation maps $\sigma_p$ and $\sigma_p'$.
Because $g^{-1}$ commutes with all of them, $\homo{(g,h)}$ is an algebra
homomorphism.

The following is the counterpart of Proposition~\ref{prop:p-reflection} for
morphisms.

\begin{proposition}
  \label{p-reflection-morphisms}
  Let $(g,h)$ be a morphism between skew Boolean spaces $p : E \to B$
  and $p' : E' \to B'$. Its lattice reflection $\D{\homo{(g,h)}} :
  \D{A_{p'}} \to \D{A_p}$ is isomorphic to the lattice homomorphism
  $h^{*} : (B')^{*} \to B^{*}$ defined by $h^{*}(S) = h^{-1}(S)$.
\end{proposition}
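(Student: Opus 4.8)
The plan is to exhibit the two lattice reflections explicitly as the Boolean algebras of copens, and then observe that the square identifying $\D{\homo{(g,h)}}$ with $h^{*}$ commutes for a reason we have already proved, namely Lemma~\ref{lemma:morphism-reflection}.

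First I would apply Proposition~\ref{prop:p-reflection} to each of $p$ and $p'$ to obtain the canonical reflection isomorphisms $\beta : \D{A_p} \to B^{*}$ with $\beta(\dclass{R}) = p(R)$ and $\alpha : \D{A_{p'}} \to (B')^{*}$ with $\alpha(\dclass{S}) = p'(S)$. Since these are precisely the maps witnessing $B^{*}$ and $(B')^{*}$ as the lattice reflections, the assertion that $\D{\homo{(g,h)}}$ is isomorphic to $h^{*}$ reduces to the commutativity of the square

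\begin{equation*}
  \xymatrix@+1em{
    {\D{A_{p'}}} \ar[r]^{\D{\homo{(g,h)}}} \ar[d]_{\alpha}
    &
    {\D{A_p}} \ar[d]^{\beta}
    \\
    {(B')^{*}} \ar[r]_{h^{*}}
    &
    {B^{*}}
  }
\end{equation*}

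I would also record, in passing, that $h^{*}$ is a well-defined lattice homomorphism: because $h$ is a proper continuous partial map with open domain, $h^{-1}(S)$ is open by continuity and compact by properness whenever $S$ is copen, hence copen, and inverse image preserves $\emptyset$ together with finite unions and intersections.

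The commutativity is then checked by chasing an arbitrary copen section $S$ of $E'$ around both paths. Descending and then crossing gives $h^{*}(\alpha(\dclass{S})) = h^{-1}(p'(S))$, whereas crossing and then descending gives $\beta(\D{\homo{(g,h)}}(\dclass{S})) = \beta(\dclass{g^{-1}(S)}) = p(g^{-1}(S))$, where I use functoriality of the reflection together with the definition $\homo{(g,h)}(S) = g^{-1}(S)$. These two expressions agree by Lemma~\ref{lemma:morphism-reflection}, which states exactly $p(g^{-1}(S)) = h^{-1}(p'(S))$.

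I do not expect any real obstacle here, since the only genuinely geometric ingredient — the interchange of direct image along $p$ and inverse image along $g$ with their counterparts along $h$ and $p'$ — is already isolated in Lemma~\ref{lemma:morphism-reflection}. The remaining care is purely organizational: confirming that the vertical maps $\alpha$ and $\beta$ are the specific reflection isomorphisms delivered by Proposition~\ref{prop:p-reflection}, so that the commuting square really does identify $\D{\homo{(g,h)}}$ with $h^{*}$ rather than merely relating them up to some unspecified isomorphism.
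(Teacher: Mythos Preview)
Your proposal is correct and follows essentially the same route as the paper: invoke Proposition~\ref{prop:p-reflection} for the vertical reflection maps and reduce to the commutativity of the square, which is exactly Lemma~\ref{lemma:morphism-reflection}. The paper's version is slightly terser---it draws the square with $A_p$ and $A_{p'}$ at the top rather than their $\DD$-quotients and appeals to the right vertical map being epi---but the content is identical.
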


\begin{proof}
  By Proposition~\ref{prop:p-reflection} the vertical maps in the
  diagram
  \begin{equation*}
    \xymatrix@+1em{
      {A_p} \ar[d]
      &
      {A_{p'}} \ar[l]_{\homo{(g,h)}} \ar[d]
      \\
      {B^{*}}
      &
      {(B')^{*}} \ar[l]^{h^{*}}
    }
  \end{equation*}
  are lattice reflections. Because the right-hand vertical arrow is
  epi it suffices to show that the diagram commutes, which is just
  Lemma~\ref{lemma:morphism-reflection}.
\end{proof}

\subsection{Duality for right-handed algebras}
\label{sec:skew-rh-duality-proofs}

Let us take stock of what we have done so far, in terms of category theory. On
the algebraic side we have the category $\CatA$ of skew algebras and homomorphisms,
as well as its reflective full subcategory $\CatRA$ on right-handed skew
algebras.

On the topological side we have the category $\CatS$ of skew Boolean spaces. A
morphism $(g,h) : p \to p'$ between skew Boolean spaces $p : E \to B$ and $p' :
E' \to B'$ is a commutative square~\eqref{eq:catS-morphism} with proper
continuous partial maps~$g$ and~$h$ whose domains of definition are open, and
the top map~$g$ is bijective on fibers. Admittedly, the morphisms in $\CatS$ are
not very nice, but in Section~\ref{sec:variations} we show that they decompose
nicely into partial identities and pullbacks.

In Section~\ref{sec:algebras-to-spaces} we defined a functor
\begin{equation*}
  \FunS : \op{\CatA} \to \CatS
\end{equation*}
which assigns to each skew algebra $A$ a skew Boolean space $\FunS(A) = q_A :
\Sk{A} \to \St{A}$. The functor takes a homomorphism $f : A \to B$ to the
corresponding morphism $\FunS(f) = (\up{f}, \down{f}) : \FunS(B) \to \FunS(A)$.

In Section~\ref{sec:spaces-to-algebras} we defined a functor
\begin{equation*}
  \FunA : \op{\CatS} \to \CatRA
\end{equation*}
which maps an étale map $p : E \to B$ between Boolean spaces to a right-handed
skew algebra $\FunA(p) = A_p$, and a morphism $(g,h) : p \to p'$ as
in~\eqref{eq:catS-morphism} to a homomorphism $\FunA(g,h) = \homo{(g,h)} :
\FunA(p') \to \FunA(p)$.

We now work towards showing that $\FunS$ restricted to $\CatRA$ and $\FunA$ form
a duality. For a skew algebra $A$ define the map $\phi_A : A \to
\FunA(\FunS(A))$ by
\begin{equation*}
  \phi_A(a) = \baseM{a}.
\end{equation*}
That $\phi_A$ is an isomorphism of right-handed skew algebras follows from
Lemma~\ref{lemma:baseM-preserves-cap} and
Proposition~\ref{prop:baseM-properties}. By the lemma $\phi_A$ preserves $\cap$,
it obviously preserves $0$, and by the proposition it is a bijection which
preserves the right-handed skew operations $\land$ and $\lor$:
\begin{align*}
  \baseM{a \land b} &=
  \baseM{b \land a \land b} =
  \sigma_A(\baseM{a}) \cap \baseM{b} =
  \baseM{a} \land \baseM{b}
  \\
  \baseM{a \lor b} &=
  \baseM{a \lor b \lor a} =
  \baseM{a} \cup (\baseM{b} \setminus \sigma_A(\baseM{a})) =
  \baseM{a} \lor \baseM{b}.
\end{align*}
Naturality of $\phi$ amounts to the identity
\begin{equation*}
  \up{f}^{-1}(\baseM{a}) = \baseM{f(a)},
\end{equation*}
where $f : A \to B$ is a homomorphism and $a \in A$. After unraveling
the definition of $\up{f}$ we see that the set on the left-hand side
consists of elements $\elemsk{P}{f(a)}$ with $f(a) \not\in P$, which
is just the description of the right-hand side. We have shown that
$\phi$ is a natural isomorphism between the identity and $\FunA \circ
\FunS$.

To establish the equivalence we also need an isomorphism $\psi_p$ between a skew
Boolean space $p : E \to B$ and $q_A : \Sk{A_p} \to \St{A_p}$, natural in $p$.
It consists of two homeomorphism $\down{(\psi_p)} = h$ and $\up{(\psi_p)} = g$
for which the following diagram commutes:
\begin{equation}
  \label{eq:psi-square}
  \xymatrix@+1em{
    {E} \ar[r]^(0.4){g} \ar[d]_{p}
    &
    {\Sk{A_p}} \ar[d]^{q_{A_p}}
    \\
    {B} \ar[r]_(0.4){h}
    &
    {\St{A_p}}
  }
\end{equation}
Because the top map determines the bottom one, we consider~$g$ first. By
Proposition~\ref{prop:baseM-properties} the map $S \mapsto \baseM{S}$ is an
order isomorphism between $A_p$ and the copen sections of $\Sk{A_p}$. But since
$A_p$ is the set of copen sections of~$E$, and the natural partial order in
$A_p$ coincides with the subset relation in $E$, the map $S \mapsto \baseM{S}$
maps the basis for~$E$ isomorphically onto the basis for $\Sk{A_p}$.
Consequently, the topologies of $E$ and $\Sk{A_p}$ are isomorphic as posets,
too, and because $E$ and $\Sk{A_p}$ are sober spaces they are homeomorphic.
Explicitly, the homeomorphism $g : E \to \Sk{A_p}$ induced by the isomorphism $S
\mapsto \baseM{S}$ takes a point $y \in E$ to the unique point $g(y) \in
\Sk{A_p}$ satisfying, for all copen sections $S$ in $E$,
\begin{equation*}
  y \in S \iff g(y) \in \baseM{S}.
\end{equation*}
Similarly, the homeomorphism $h : B \to \St{A_p}$ is characterized by the
requirement, for all copen sections $S$ in $E$,
\begin{equation*}
  x \in p(S) \iff h(x) \in \baseN{S}.
\end{equation*}
It is not hard to verify that $h(x) = \set{\dclass{R} \such R \in A_p \land x
  \in p(R)}$ and that $g(x) = \elemsk{h(x)}{S}$ for any $S \in A_p$ such that $x
\in p(S)$.

We verify that~\eqref{eq:psi-square} commutes by checking that the
corresponding square of inverse image maps does. On one hand, starting
with a copen section $\baseN{R}$ in $\St{A_p}$, we have
\begin{equation*}
  \textstyle
  p^{-1}(h^{-1}(\baseN{R})) = 
  p^{-1}(R) = \bigcup \,\set{S \such p(S) \subseteq R},
\end{equation*}
where $S$ in the union ranges over copen sections in~$E$. On the other
hand,
\begin{multline*}
  \textstyle
  g^{-1}(q_{A_p}^{-1}(\baseN{R})) =
  g^{-1}(\bigcup \set{\baseM{S} \such p(S) \subseteq R}) = \\
  \textstyle
  \bigcup \, \set{g^{-1}(\baseM{S}) \such p(S) \subseteq R} =
  \bigcup \, \set{S \such p(S) \subseteq R},
\end{multline*}
where $S$ again ranges over copen sections in~$E$.

Naturality of $\psi$ involves the commutativity of a cube which we
prefer not to draw because six of its faces commute by definition and
the two remaining faces are
\begin{equation*}
  \xymatrix@+1.5em{
    {B} \ar[r]^(0.4){\down{(\psi_p)}} \ar@{-^>}[d]_h
    &
    {\St{A_p}} \ar@{-^>}[d]^{\down{\homo{(g,h)}}}
    \\
    {B'} \ar[r]_(0.4){\down{(\psi_{p'})}}
    &
    {\St{A_{p'}}}
  }
  \qquad\qquad
  \xymatrix@+1.5em{
    {E} \ar[r]^(0.4){\up{(\psi_p)}} \ar@{-^>}[d]_g
    &
    {\Sk{A_p}} \ar@{-^>}[d]^{\up{(\homo{(g,h)})}}
    \\
    {E'} \ar[r]_(0.4){\up{(\psi_{p'})}}
    &
    {\Sk{A_{p'}}}
  }
\end{equation*}
where $p : E \to B$ and $p' : E' \to B'$ are skew Boolean spaces and
$(g,h)$ is a morphism from $p$ to $p'$. We check commutativity of the
right-hand square, the other one is similar. Again, we verify that the
corresponding square of inverse image maps commutes. For any copen
section $\baseM{S'}$ in the lower-right corner we have
\begin{equation*}
  g^{-1}(\up{(\psi_{p'})}^{-1}(\baseM{S'})) =
  g^{-1}(S')
\end{equation*}
and
\begin{equation*}
  \up{(\psi_p)}^{-1}(\up{(\homo{(g,h)})}^{-1}(\baseM{S'})) =
  \up{(\psi_p)}^{-1}(\baseM{\homo{(g,h)}(S')}) =
  \homo{(g,h)}(S') =
  g^{-1}(S').
\end{equation*}
We have proved the following main theorem.

\begin{theorem}
  \label{th:right-handed-duality}
  The category of right-handed skew Boolean algebras with intersections is dual
  to the category of skew Boolean spaces.
\end{theorem}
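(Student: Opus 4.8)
The plan is to appeal to the standard characterization of a duality: a pair of contravariant functors $\FunS : \op{\CatRA} \to \CatS$ and $\FunA : \op{\CatS} \to \CatRA$ exhibits a duality precisely when the two composites $\FunA \circ \FunS$ and $\FunS \circ \FunA$ are naturally isomorphic to the respective identity functors. First I would note that this is well-posed: although $\FunS$ was defined on all of $\CatA$, its restriction to the full subcategory $\CatRA$ is harmless, and $\FunA$ always produces a right-handed algebra $A_p$, so the composite $\FunA \circ \FunS$ genuinely carries $\CatRA$ into itself. It therefore suffices to assemble the two natural isomorphisms, both of which have in effect already been built in the preceding paragraphs, and to record functoriality.

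For the unit direction I would take $\phi : \id{\CatRA} \Rightarrow \FunA \circ \FunS$ with components $\phi_A(a) = \baseM{a}$. That each $\phi_A$ is an isomorphism of right-handed skew algebras has been established: it preserves $0$ trivially, preserves $\cap$ by Lemma~\ref{lemma:baseM-preserves-cap}, preserves the right-handed operations $\land$ and $\lor$ by the two identities of Proposition~\ref{prop:baseM-properties}, and is a bijection by the injectivity proved there together with the surjectivity of Proposition~\ref{prop:baseM-all-copen-sections}. Naturality is exactly the identity $\up{f}^{-1}(\baseM{a}) = \baseM{f(a)}$, already verified by unwinding the definition of $\up{f}$.

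For the counit direction I would take $\psi : \id{\CatS} \Rightarrow \FunS \circ \FunA$ whose component at a skew Boolean space $p : E \to B$ is the pair $\psi_p = (g,h)$ of homeomorphisms making the square~\eqref{eq:psi-square} commute. Both maps are homeomorphisms: $g$ by the sobriety argument that upgrades the poset isomorphism $S \mapsto \baseM{S}$ between the copen sections (hence bases) of $E$ and of $\Sk{A_p}$ to a homeomorphism of spaces, and $h$ correspondingly on the base. Commutativity of~\eqref{eq:psi-square} was checked on inverse images, and naturality reduces to the two cube faces displayed above, each settled by chasing a basic copen section $\baseM{S'}$ through the inverse-image maps.

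Having both natural isomorphisms in hand, I would conclude that $\FunS$ restricted to $\CatRA$ and $\FunA$ form a duality. The genuine difficulty is already behind us, and it lay in two places: the algebraic preservation in $\phi$ — the identities of Proposition~\ref{prop:baseM-properties}, which encode how $\land$ and $\lor$ translate into saturations and Boolean set operations on sections — and the homeomorphism claim for $g$ in $\psi$, where sobriety is needed to pass from an order isomorphism of bases to a homeomorphism. What remains for the theorem proper is the routine bookkeeping that $\FunS$ and $\FunA$ respect composition and identities of morphisms, and that $\phi$ and $\psi$ are natural in the variance demanded by the opposite categories; given the explicit formulas $\phi_A(a) = \baseM{a}$ and $\homo{(g,h)}(S) = g^{-1}(S)$, none of this presents any obstacle.
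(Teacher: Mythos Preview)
Your proposal is correct and follows essentially the same approach as the paper: the theorem is stated after the verification is already complete, and your outline accurately recapitulates that verification---$\phi_A(a)=\baseM{a}$ is shown to be an isomorphism via Lemma~\ref{lemma:baseM-preserves-cap} and Proposition~\ref{prop:baseM-properties}, naturality reduces to $\up{f}^{-1}(\baseM{a})=\baseM{f(a)}$, and $\psi_p$ is built from the sobriety argument with naturality checked on the cube faces. There is nothing to add.
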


\noindent
Clearly, there is also duality between \emph{left}-handed skew algebras and skew
Boolean spaces, simply because the categories of left-handed and the
right-handed skew algebras are isomorphic.

\subsection{Duality for skew algebras}
\label{sec:dual-skew-algebr}

To see what is needed for duality in the case of a general skew algebra,
consider what happens when we take a skew algebra $A$ to its skew Boolean space $q_A : \Sk{A} \to \St{A}$, and then the space to the right-handed skew
algebra $A_{q_A}$. By Proposition~\ref{prop:baseM-properties} the elements and
the natural partial order do not change (up to isomorphism), but the operations
do. The new ones are expressed in terms of the original ones as
\begin{equation*}
  x \land' y = y \land x \land y 
  \qquad\text{and}\qquad
  x \lor' y = x \lor y \lor x .
\end{equation*}
If we want to recover $\land$ and $\lor$ from $\land'$ and $\lor'$ we need to
break the symmetry that is present in $\land'$ and $\lor'$ by keeping around
enough information about the original operations of~$A$.

Recall that a rectangular band is a set with an operation which is idempotent,
associative and it satisfies the rectangular identity. We can similarly define
rectangular bands in any category with finite products. For example, a
rectangular band in the category of étale maps over a given base space~$B$ is an
étale map $p : E \to B$ together with a continuous map ${\land} : E \times_B E
\to E$ over~$B$ which satisfies the required identities fiber-wise.

The skew Boolean space $q_A : \Sk{A} \to \St{A}$ carries the structure of a
rectangular band whose operation ${\sand} : \Sk{A} \times_{\St{A}} \Sk{A} \to
\Sk{A}$ is defined by
\begin{equation*}
  \elemsk{P}{a} \sand \elemsk{P}{b} = \elemsk{P}{a \land b}.
\end{equation*}
Idempotency and associativity of $\sand$ follow immediately from the
corresponding properties of $\land$ and the fact that $\thetaeq_P$ is a
congruence. To see that the rectangular identity is satisfied, let $a, b, c
\not\in P$. Because $(A, {\land})$ forms a normal band, namely it satisfies the
identity $x \land y \land z \land w = x \land z \land y \land w$, it follows
that $a \land b \land c \leq a \land c$ and so $a \land b \land c \thetaeq_P a
\land c$ by Lemma~\ref{lemma:s-less-than-t-not-in-P}, which is equivalent to
$\elemsk{P}{a} \sand \elemsk{P}{b} \sand \elemsk{P}{c} = \elemsk{P}{a} \sand
\elemsk{P}{c}$.

We have to check that $\sand$ is continuous. Let $(\elemsk{P}{a},
\elemsk{P}{b})$ be any point of the domain of~$\sand$ and suppose $\elemsk{P}{a
  \land b} \in \baseM{c}$ for some $c \in A$. We seek an open neighborhood of
$(\elemsk{P}{a}, \elemsk{P}{b})$ which is mapped into $\baseM{c}$ by $\sand$.
Because basic open subsets of $\Sk{A} \times_{\St{A}} \Sk{A}$ are of the form
\begin{equation*}
  \set{(\elemsk{Q}{u}, \elemsk{Q}{v}) \such
    \text{$Q \subseteq A$ prime ideal, $u \not\in Q$ and $v \not\in Q$}},
\end{equation*}
it suffices to find $u, v \in A$ such that $a \thetaeq_P u$ and $b \thetaeq_P
v$, and for all prime ideals $Q \subseteq A$, if $u \not\in Q$ and $v \not\in Q$
then $u \land v \thetaeq_Q c$. We claim that
\begin{align*}
  u &= (a \land b \land a) \cap (c \land a) \\
  v &= (b \land a \land b) \cap (b \land c)
\end{align*}
satisfy these conditions. We note that $u \land v \leq c$ because
\begin{equation*}
  u \land v =
  ((a \land b \land a) \cap (c \land a) ) \land
  ((b \land a \land b) \cap (b \land c)) \leq (c\land a) \land (b\land c)
  \leq c,
\end{equation*}
where we used the fact that $\land$ is compatible with the natural partial
order, which is the case because $(A, {\land})$ is a normal band. Next observe
that $a \land b \land a \thetaeq_P c \land a$ because $\elemsk{P}{a \land b} \in \baseM{c}$,
hence
\begin{equation*}
  u = (a \land b \land a) \cap (c \land a)
    \thetaeq_P (a \land b \land a) \cap (a \land b \land a) = a \land b \land a
    \thetaeq_P a,
\end{equation*}
where the last step follows from Lemma~\ref{lemma:s-less-than-t-not-in-P} and
$a, b \not\in P$. We similarly show that $v \thetaeq_P b$. If $Q \subseteq A$ is
a prime ideal such that $u \not\in Q$ and $v \not\in Q$, then $u \land v \not\in
Q$ and since $u \land v \leq c$ we get $u \land v \thetaeq_Q c$, again by
Lemma~\ref{lemma:s-less-than-t-not-in-P}.

The rectangular band structure on $A_p$ is precisely what is needed for duality
in the general case.

\begin{theorem}
  The category of skew Boolean algebras with intersections is dual to the
  category of rectangular skew Boolean spaces.
\end{theorem}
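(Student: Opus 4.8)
The plan is to upgrade the right-handed duality of Theorem~\ref{th:right-handed-duality} to the general case by tracking the rectangular band operation~$\sand$ as extra structure on both sides. First I would fix the definitions left implicit in the statement: a \emph{rectangular skew Boolean space} is a skew Boolean space $q : E \to B$ equipped with a continuous rectangular band operation ${\sand} : E \times_B E \to E$ over~$B$, and a morphism of such spaces is a morphism $(g,h)$ of skew Boolean spaces in which the fiberwise bijection~$g$ additionally commutes with the two band operations, i.e.\ $g(x \sand x') = g(x) \sand' g(x')$ on the domain where both sides are defined. With these definitions in hand, the functor $\FunS$ sends a skew algebra~$A$ to $q_A : \Sk{A} \to \St{A}$ together with the operation $\elemsk{P}{a} \sand \elemsk{P}{b} = \elemsk{P}{a \land b}$, which the discussion preceding the theorem has already shown to be a well-defined continuous rectangular band. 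The functor $\FunA$ sends a rectangular skew Boolean space to the set $A_p$ of copen sections, but now recovers the true, possibly non-commutative operations via
\begin{equation*}
  S \land R = \sigma_p(S \sand R)
  \qquad\text{and}\qquad
  S \lor R = S \lor' R \text{ rebuilt from } \sand,
\end{equation*}
rather than the symmetric $\land'$ and $\lor'$ of the right-handed collapse.

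Second I would verify that $\FunA(p)$ really is a skew algebra and not merely a right-handed one. Here I would lean on Theorem~\ref{theorem:partial-functions-band}: viewing copen sections as partial maps $B \parto E$ and viewing $\sand$ as a coherent family of rectangular bands (coherence is exactly fiberwise-ness of $\sand$ together with the fact that restriction of sections corresponds to restriction of partial maps), that theorem hands us a skew algebra structure on a superalgebra of~$A_p$, and I would check that $A_p$ is closed under those operations. The reconstruction formulas above should be read off directly from the $\land$ and $\lor$ of Theorem~\ref{theorem:partial-functions-band} once $\sand$ is substituted for the abstract $\sand_D$. The payoff is that the right-handed collapse $x \land' y = y \land x \land y$ is precisely what one gets by \emph{forgetting}~$\sand$ and symmetrizing, so retaining~$\sand$ retains exactly the information that was lost.

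Third I would establish the two natural isomorphisms. The unit $\phi_A : A \to \FunA(\FunS(A))$ is still $a \mapsto \baseM{a}$; by Proposition~\ref{prop:baseM-properties} it is already an order isomorphism preserving $0$ and $\cap$, and the only new obligation is that it intertwines the band operations, which is immediate from $\baseM{a} \sand \baseM{b} = \baseM{a \land b}$ and the definition of $\sand$ on $\Sk{A}$. Granted that, the reconstructed $\land$ and $\lor$ on $\FunA(\FunS(A))$ agree with those of~$A$ by the formulas above, so $\phi_A$ is a full skew-algebra isomorphism. The counit $\psi_p$ is the same pair of homeomorphisms $(g,h)$ built in Section~\ref{sec:skew-rh-duality-proofs}; since $g$ is induced by the order isomorphism $S \mapsto \baseM{S}$ and that map sends $\sand$ to $\sand$, the homeomorphism~$g$ automatically respects the band structure, so $\psi_p$ is an isomorphism of rectangular skew Boolean spaces.

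The main obstacle I expect is bookkeeping on the morphism side: I must confirm that $\homo{(g,h)}(S) = g^{-1}(S)$ is a skew-algebra homomorphism for the reconstructed operations, i.e.\ that $g^{-1}$ commutes with~$\sand$ and hence with the rebuilt $\land$ and $\lor$, not merely with the saturations $\sigma_p$ as in the right-handed case. This follows because the morphism condition forces $g$ to commute with the band operations fiberwise, so $g^{-1}(S \sand' R) = g^{-1}(S) \sand g^{-1}(R)$; combined with the already-established commutation of $g^{-1}$ with $\sigma_p$ (Lemma~\ref{lemma:morphism-reflection}) and with Boolean set operations, every reconstructed operation is preserved. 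Naturality of $\phi$ and $\psi$ then reduces, as before, to the identity $\up{f}^{-1}(\baseM{a}) = \baseM{f(a)}$ and the cube computation of Section~\ref{sec:skew-rh-duality-proofs}, now with the band-preservation faces commuting by construction.
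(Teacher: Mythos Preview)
Your plan is essentially the paper's own proof: both upgrade $\FunS$ and $\FunA$ by carrying~$\sand$, define the skew operations on $A_p$ via Theorem~\ref{theorem:partial-functions-band}, reuse $\phi_A(a) = \baseM{a}$ and the homeomorphisms of Section~\ref{sec:skew-rh-duality-proofs}, and add the check that $g$ and $g^{-1}$ respect the band structure. One correction: your displayed formula $S \land R = \sigma_p(S \sand R)$ cannot be right, since the saturation of a section is a clopen set, not a section; the formula you want (and the one Theorem~\ref{theorem:partial-functions-band} actually gives, and the one the paper uses) is $S \land R = (S \cap \sigma_p(R)) \sand (\sigma_p(S) \cap R)$, which first restricts each section to the common base and then applies~$\sand$ pointwise.
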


\begin{proof}
  By rectangular skew Boolean space $(p : E \to B, {\sand})$ we mean a
  rectangular band in the category of surjective étale maps over~$B$. More
  precisely, it is a skew Boolean space $p : E \to B$ together with a (not
  necessarily proper) continuous map $\sand$ over~$B$
  \begin{equation*}
    \xymatrix{
      {E \times_B E} \ar[dr] \ar[rr]^{\sand} & &
      {E} \ar[ld]
      \\
      & {B} &
    }
  \end{equation*}
  which makes every fiber $E_x$ into a rectangular band. Notice that in general
  a rectangular skew Boolean space is \emph{not} a rectangular band in the
  category of skew Boolean spaces because $\sand$ need not be proper. A morphism
  between $(p : E \to B, {\sand})$ and $(p' : E' \to B', {\sand'})$ is a
  morphism of skew Boolean spaces $(g,h) : p \to p'$ which commutes with the
  operations on its domain of definition:
  \begin{equation*}
    \xymatrix@C+1.5em{
      {\dom{g} \times_B \dom{g}}
      \ar@{-^>}[r]^(0.55){g \times g}
      \ar[d]_{\sand}
      &
      {E' \times_{B'} E'}
      \ar[d]^{\sand'}
      \\
      {\dom{g}}
      \ar@{-^>}[r]_{g}
      &
      {E'}
    }
  \end{equation*}
  Note that the commutativity of the square implies that $\dom{g}$ is closed
  under~$\sand$, so $\dom{g} \cap E_x$ is a rectangular sub-band of $E_x$ at
  every $x \in B$. And since~$g$ is bijective on fibers, $\restricted{g}{x} :
  \dom{g} \cap E_x \to E_{h(x)}$ is an isomorphism of rectangular bands for
  every $x \in \dom{h}$. We denote the category of rectangular skew Boolean
  spaces and their morphisms by~$\CatRS$.

  The duality is witnessed by a pair of contravariant functors
  \begin{equation*}
    \FunS : \op{\CatA} \to \CatRS
    \qquad\text{and}\qquad
    \FunA : \op{\CatRS} \to \CatA.
  \end{equation*}
  The functor $\FunS$ maps a skew algebra $A$ to the rectangular skew Boolean
  space $\FunS(A) = (p : \Sk{A} \to \St{A}, {\sand})$, as described above. It takes
  a morphism $f : A \to A'$ to the morphism of skew Boolean spaces $\FunS(f) =
  (\up{f}, \down{f})$, which commutes with $\sand$ because~$f$ commutes with
  $\land$.

  The functor $\FunA$ maps a rectangular skew Boolean space $(p : E \to B,
  {\sand})$ to the skew algebra $\FunA(p, {\sand})$ whose elements are the copen
  sections of~$p$ and the operations are defined as follows:
  \begin{align*}
    0 &= \emptyset,\\
    S \land R &=  (S \cap \sigma_p(R)) \sand (\sigma_p(S) \cap R), \\
    S \lor R  &=  (S - \sigma_p(R)) \cup (R - \sigma_p(S)) \cup (R \land S), \\
    S \setminus R &= S - \sigma_p(R), \\
    S \cap R &= S \cap R,
  \end{align*}
  These form a skew algebra because they are restrictions of the operations from
  Theorem~\ref{theorem:partial-functions-band}.

  The functor $\FunA$ maps a morphism $(g,h) : (p : E \to B, {\sand}) \to (p':E'
  \to B', {\sand'})$ to the homomorphism $\FunA(g,h) = \homo{(g,h)}$. We need to
  verify that $\FunA(g,h)$ preserves~$\sand$. Recall that $\homo{(g,h)}$ is just
  $g^{-1}$ acting on copen sections. In Section~\ref{sec:spaces-to-algebras} we
  checked that $g^{-1}$ commutes with the saturation operations. Because for
  every $x \in \dom{h}$ the map $\restricted{g}{x} : E_x \cap \dom{g} \to
  E'_{h(x)}$ is an isomorphism of rectangular bands, it is not hard to see that
  $g^{-1}$ commutes with~$\sand$. Therefore, $g^{-1}$ commutes with all the
  operations used to define the operations on $G(p, {\sand})$ and $G(p',
  {\sand'})$, so it is a homomorphism of skew algebras.

  It remains to be checked that $\FunS \circ \FunA$ and $\FunA \circ \FunS$ are
  naturally isomorphic to identity functors. Luckily, we can reuse a great deal
  of verification of duality for right-handed algebras from
  Section~\ref{sec:skew-rh-duality-proofs}. 

  The natural isomorphism $\phi$ from the identity to $\FunA \circ \FunS$ is
  defined as in the right-handed case: for a skew algebra $A$ set $\phi_A(a) =
  \baseM{a}$. Thus we already know that it is a bijection which preserves
  intersections and relative complements, but we still have to check that it
  preserves meets and joins. It preserves meets because
  \begin{equation*}
    \baseM{a} \land \baseM{b} =
    \baseM{a \land b \land a} \sand \baseM{b \land a \land b} =
    \baseM{a \land b}
  \end{equation*}
  where we used Proposition~\ref{prop:baseM-properties} in the first step and
  the fact that $\elemsk{P}{a \land b \land a} \sand \elemsk{P}{b \land a \land
    b} = \elemsk{P}{a \land b}$ in the last step.
  With the help of Proposition \ref{prop:baseM-properties} it is not hard to
  verify that whenever $a$ and $b$ commute then $\baseM{a \lor b} = \baseM{a}
  \cup \baseM{b} = \baseM{b \lor a}$, so $\phi_A$ preserves commuting joins. But
  since for arbitrary $a$ and $b$ their join can be expressed as a commuting
  join $a \lor b = (a \setminus b) \lor (b\setminus a) \lor (b \land a)$, and we
  already know that $\phi_A$ preserves $\setminus$ and $\land$, it follows that
  $\phi_A$ preserves joins. Naturality of $\phi_A$ is checked as in the
  right-handed case.

  The natural isomorphism $\psi$ from the identity to $\FunS \circ \FunA$ is
  defined as in the right-handed case. Given a rectangular skew Boolean space
  $(p : E \to B, {\sand})$, let $\psi_{p, {\sand}}$ be the morphism consisting
  of the two homeomorphisms $\down{(\psi_{p,{\sand}})} = h$ and
  $\up{(\psi_{p,{\sand}})} = g$ from diagram~\eqref{eq:psi-square}. All that we
  need to check in addition to what was already checked for~$\psi$ in
  Section~\ref{sec:skew-rh-duality-proofs} is that $g$ preserves the rectangular
  band structure. For any $b \in B$, $x, y \in E_b$ and $T \in A_p$ we have
  \begin{equation*}
    g(x \sand y) \in \baseM{T} \iff
    x \sand y \in T.
  \end{equation*}
  On the other hand, if $g(x) = \elemsk{h(b)}{S}$ and $g(y) = \elemsk{h(b)}{R}$
  then 
  \begin{multline*}
    g(x) \sand g(y) \in \baseM{T} \iff
    \elemsk{h(b)}{S \sand R} \in \baseM{T} \iff \\
    S \sand R \thetaeq_{h(b)} T \ \text{and}\ b \in p(T) \iff
    x \sand y \in T.
  \end{multline*}
  We see that $g(x) \sand g(y)$ and $g(x \sand y)$ have the same
  neighborhoods, therefore they are equal.
\end{proof}

It may be argued that our duality has not gone all the way from algebra to
geometry because a rectangular skew Boolean space still carries the algebraic
structure of a rectangular band. However, this is not really an honest algebraic
structure, as can be suspected from the fact that the category of non-empty
rectangular bands is equivalent to the category of pairs of sets. The
equivalence takes a rectangular band $(A, {\land})$ to the pair of sets $(A/\RR,
A/\LL)$ where $A/\RR$ and $A/\LL$ are the quotients of~$A$ by Green's relations
$\RR$ and $\LL$, respectively. In the other direction, a pair of sets $(X,Y)$ is
mapped to the rectangular band $X \times Y$ with the operation $(x_1, y_1) \land
(x_2, y_2) = (x_1, y_2)$. The analogous decomposition of rectangular skew
Boolean spaces yields the following variant of duality for skew algebras.

\begin{theorem}
  The category of skew Boolean algebras with intersections is dual to the
  category of pairs of skew Boolean spaces with common base.
\end{theorem}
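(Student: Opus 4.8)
The plan is to derive this theorem from the previous one by composing the duality $\FunS, \FunA$ with an equivalence between the category $\CatRS$ of rectangular skew Boolean spaces and the category $\mathsf{Pairs}$ of pairs of skew Boolean spaces with common base. First I would fix the target category: an object of $\mathsf{Pairs}$ is a pair of surjective étale maps $p_L : E_L \to B$ and $p_R : E_R \to B$ between Boolean spaces sharing the base~$B$, and a morphism is a pair of skew Boolean space morphisms $(g_L, h) : p_L \to p_L'$ and $(g_R, h) : p_R \to p_R'$ with a common base component~$h$. The motivating fact, already noted in the text, is that a non-empty rectangular band is the same data as the pair of sets $(A/\LL, A/\RR)$; I want to carry this decomposition over fiberwise and continuously, so that the theorem follows from $\CatA \simeq \op{\CatRS} \simeq \op{\mathsf{Pairs}}$.

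In the direction from $\CatRS$ to $\mathsf{Pairs}$ I would send a rectangular skew Boolean space $(p : E \to B, \sand)$ to the pair of fiberwise quotients $E/\LL \to B$ and $E/\RR \to B$ by Green's relations of the band operation~$\sand$. The technical content is that these quotients are again Boolean spaces with surjective étale projection to~$B$. Here I would observe that, as a subset of the fibered product, $\RR = \set{(e,e') \such e \sand e' = e' \text{ and } e' \sand e = e}$ is an intersection of equalizers of pairs of étale morphisms over~$B$ built from $\sand$ and the projections, hence open by Proposition~\ref{prop:etale-equalizers-coequalizers}, and likewise for~$\LL$. Consequently the topological quotient map $E \to E/\RR$ is open and the induced map $E/\RR \to B$ is étale, exactly as in the coequalizer computation of that proposition, while local compactness, zero-dimensionality and the Hausdorff property are transported to the quotient just as in the corollary showing that $\Sk{A}$ is a Boolean space.

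In the reverse direction I would send a pair $p_L : E_L \to B$, $p_R : E_R \to B$ to the fibered product $E_L \times_B E_R \to B$ equipped with the fiberwise operation $(x_1, y_1) \sand (x_2, y_2) = (x_1, y_2)$. A fibered product of surjective étale maps is again surjective and étale, and $\sand$ is continuous because it is assembled from the two projections, so this lands in $\CatRS$. That the two functors are mutually inverse up to natural isomorphism is the heart of the argument: the homeomorphism $E \cong (E/\LL) \times_B (E/\RR)$ over~$B$ is the fiberwise content of Leech's Second Decomposition Theorem (the pullback square of quotient maps, which on each single-$\DD$-class fiber degenerates to a product), promoted to a homeomorphism of bundles using that both quotient maps are open; conversely $(E_L \times_B E_R)/\LL \cong E_L$ and $(E_L \times_B E_R)/\RR \cong E_R$ hold fiber by fiber and hence globally. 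On morphisms the correspondence is immediate, since a morphism of $\CatRS$ is fiberwise an isomorphism of rectangular bands and such isomorphisms decompose uniquely into their $\LL$- and $\RR$-parts over the common base map~$h$.

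The main obstacle I anticipate is precisely the topology of the quotients $E/\LL$ and $E/\RR$: one must check that the fiberwise Green's relations are open in the fibered product so that the quotients are honest Boolean spaces with étale projections, and that the two passages are inverse as bundles over~$B$ rather than merely fiber by fiber. Both points are dispatched by combining Proposition~\ref{prop:etale-equalizers-coequalizers} with Leech's Second Decomposition Theorem; everything else is the routine bookkeeping of transporting the established duality along this equivalence.
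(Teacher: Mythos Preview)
Your proposal is correct and follows essentially the same route as the paper: both reduce the theorem to an equivalence between rectangular skew Boolean spaces and pairs of skew Boolean spaces over a common base, construct the two functors as fibered product with the evident band operation and fiberwise Green's quotients respectively, and use Proposition~\ref{prop:etale-equalizers-coequalizers} to control the topology of the quotients. The paper's proof differs only in presentation---it spells out the morphism side via the coequalizer universal property rather than invoking the fiberwise decomposition of band isomorphisms, and it phrases the object-level equivalence as ``fiberwise the equivalence between non-empty rectangular bands and pairs of sets'' rather than citing Leech's Second Decomposition Theorem---but the underlying argument is the same.
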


\begin{proof}
  A pair of skew Boolean spaces with a common base is a diagram
  \begin{equation}
    \label{eq:skew-pair-object}
    \xymatrix{
      {E_L} \ar[r]^{p_L}
      &
      {B}
      &
      {E_R} \ar[l]_{p_R}
    }
  \end{equation}
  where $p_L : E_L \to B$ and $p_R : E_R \to B$ are skew Boolean spaces. A
  morphism is a commutative diagram
  \begin{equation}
    \label{eq:skew-pair-morphism}
    \xymatrix{
      {E_L} \ar[r]^{p_L} \ar@{-^>}[d]_{g_L} &
      {B} \ar@{-^>}[d]_{h} &
      {E_R} \ar[l]_{p_R} \ar@{-^>}[d]^{g_R} \\
      {E'_L} \ar[r]_{p'_L} &
      {B'} &
      {E'_R} \ar[l]^{p'_R}
    }
  \end{equation}
  in which the left- and right-hand square are morphisms of skew Boolean spaces
  (in the vertical direction). The diagrams are composed in the obvious way and
  we clearly get a category.

  We establish the duality by showing that the category of pairs of skew Boolean
  spaces with common base is equivalent to the category of rectangular skew
  Boolean spaces. The idea is to have equivalence functors work at the level of
  fibers in the same way as the equivalence of non-empty rectangular bands and
  pairs of sets.

  To convert a pair of skew Boolean spaces~\eqref{eq:skew-pair-object} into a
  rectangular Boolean space we form the pullback
  \begin{equation*}
    \xymatrix{
      E \ar[d]  \ar@{->}[r] \pbcorner
      &
      E_R \ar[d]^{p_R}
      \\
      E_L \ar@{->}[r]_{p_L}
      &
      B
    }
  \end{equation*}
  to obtain a skew Boolean space $p : E \to B$. Concretely, the fiber $E_x$ over
  $x \in B$ consists of pairs $(u,v) \in E_L \times E_R$ such that $p_L(u) = x =
  p_R(v)$, and $p(u,v) = p_L(u) = p_R(v)$. The rectangular band operation
  $\sand$ on $p : E \to B$ defined by
  \begin{equation*}
    (u_1, v_1) \sand (u_2, v_2) = (u_1, v_2),
  \end{equation*}
  obviously makes the fibers into rectangular bands. A
  morphism~\eqref{eq:skew-pair-morphism} corresponds to the morphism
  \begin{equation*}
    \xymatrix{
      {E}
      \ar@^{->}[r]^{g}
      \ar[d]_{p}
      &
      {E'}
      \ar[d]^{p'}
      \\
      {B}
      \ar@^{->}[r]_{h}
      &
      {B'}
    }
  \end{equation*}
  where $g$ is the partial map with domain $\dom{g_L} \times_B \dom{g_R}$
  defined by
  \begin{equation*}
    g(x,y) = (g_L(x), g_R(y)).
  \end{equation*}
  It clearly preserves $\sand$.

  In the opposite direction we start with a rectangular skew Boolean space $(p :
  E \to B, {\sand})$ and form a pair of skew Boolean spaces with a common base
  as follows. First construct the fiber-wise Green's relation $\LL$ on $E$ as
  the equalizer
  \begin{equation*}
    \xymatrix@C+4em{
      {\LL} \ar[r]^(0.4){\ell} \ar[rd]
      &
      {E \times_B E}
      \ar[d]
      \ar@<+0.25em>[r]^{\id{E \times_B E}}
      \ar@<-0.25em>[r]_{{\sor} \times_B {\sand}}
      &
      {E \times_B E}
      \ar[ld]
      \\
      & {B} &
    }
  \end{equation*}
  in the topos of étale maps with base~$B$. In the above diagram $\sor : E
  \times_B E \to E$ is the operation associated with $\sand$ by $x \sor y = y
  \sand x$. Still in the topos, we form the coequalizer
  \begin{equation*}
    \xymatrix@C+3em{
      {\LL}
      \ar@<+0.25em>^{\pi_1 \circ \ell}[r]
      \ar@<-0.25em>_{\pi_2 \circ \ell}[r]
      \ar[rd]
      &
      {E} \ar[r]^{q_L} \ar[d]_{p}
      &
      {E_L} \ar[ld]^{p_L}
      \\
      & {B} &
    }
  \end{equation*}
  The quotient $E_L$ is Hausdorff because by
  Proposition~\ref{prop:etale-equalizers-coequalizers} the map $q_L$ is open,
  and a pair of points in~$E$ may always be separated by clopen sections. We now
  have one of the skew Boolean spaces $p_L : E_L \to B$, and there is an
  analogous construction of $p_R : E_R \to B$.
  On a single fiber $E_x$ over $x \in B$ the functor just performs the usual
  decomposition of the rectangular band $E_x$ into its left- and right-handed
  factors $E_x/\RR_x$ and $E_x/\LL_x$. This is so because by
  Proposition~\ref{prop:etale-equalizers-coequalizers} equalizers and
  coequalizers of étale maps are computed fiber-wise.

  A morphism $(g,h)$ from $(p : E \to B, {\sand})$ to $(p' : E' \to B',
  {\sand'})$, displayed explicitly as inclusions of the domains of definition
  and total maps,
  \begin{equation}
    \xymatrix{
      {E} \ar[d]_{p}   &
      {\dom{g}} \ar@{->}[l] \ar[d] \ar[r]^(0.55){g} &
      {E'} \ar[d]^{p'} \\
      {B} & {\dom{h}} \ar@{->}[l] \ar[r]_(0.55){h} & {B'}
    }
  \end{equation}
  corresponds to a morphism between pairs of skew Boolean spaces as described
  next. Consider the commutative diagram
  \begin{equation*}
    \xymatrix@C+1.5em{
      {\LL \cap (\dom{g} \times_B \dom{g})}
      \ar@<+0.25em>[r]
      \ar@<-0.25em>[r]
      \ar[rd]
      &
      {\dom{g}} \ar[r]^(0.45){q_L} \ar[d]_{p}
      &
      {q_L(\dom{g})} \ar@{ (->}[d]
      \\
      & {B}
      & {E_L} \ar[l]^{p_L}
    }
  \end{equation*}
  where the two parallel arrows into $\dom{g}$ are the restrictions of $\pi_1
  \circ l$ and $\pi_2 \circ l$ to $\dom{g}$. By
  Proposition~\ref{prop:etale-equalizers-coequalizers} the map $q_L$ is open,
  hence $q_L(\dom{g})$ is an open subspace of $E_L$. Moreover, because $\dom{g}$
  is an open subspace of~$E$ and $q_L$ is open, it is not hard to check that the
  top row of the diagram is a coequalizer.
  Because~$g$ commutes with $\sand$, the map $q'_L \circ g$ factors through the
  coequalizer,
  \begin{equation*}
    \xymatrix@C+2em{
      {\dom{g}}
      \ar[r]^{g}
      \ar[d]_{q_L}
      &
      {E'}
      \ar[d]^{q'_L}
      \\
      {q_L(\dom{g})}
      \ar[r]_(0.55){g_L}
      &
      {E'_L}
    }
  \end{equation*}
  We have obtained a partial map $g_L : E_L \parto E'_L$ whose domain
  $q_L(\dom{g})$ is an open subspace of $E_L$. Also $q_L$ is proper because $g$
  is proper. We similarly obtain the right-handed version $g_R : E_R \parto
  E'_R$. This gives us the desired morphism
  \begin{equation*}
    \xymatrix{
      {E_L} \ar[r]^{p_L} \ar@{-^>}[d]_{g_L} &
      {B} \ar@{-^>}[d]^{h}&
      {E_R} \ar[l]_{p_R} \ar@{-^>}[d]^{g_R}  \\
      {E'_L} \ar[r]_{p'_L} &
      {B'} &
      {E'_R} \ar[l]^{p'_R}
    }
  \end{equation*}
  To see that the two functors just described form an equivalence, we use the
  fact that fiber-wise they correspond to the equivalence between non-empty
  rectangular bands and pairs of non-empty sets. We omit the details.
\end{proof}

\section{Variations}
\label{sec:variations}

The morphisms between skew Boolean spaces were determined by our taking
\emph{all} homomorphisms on the algebraic side of duality. In this section we
consider several variants in which the homomorphisms are restricted. We limit
attention to the right-handed case, and ask the kind reader who will work out
the general case to let us know whether there are any surprises. 
As a preparation we first show how morphisms of skew Boolean spaces decompose
into open inclusions and pullbacks.

\begin{lemma}
  \label{lemma:bijective-on-fibers-homeo}
Suppose $p : E \to B$ and $p' : E' \to B$ are skew Boolean
spaces and $g : E
  \to E'$ is a proper continuous map such that
  \begin{equation*}
    \xymatrix{
      {E} \ar[rr]^{g} \ar[rd]_{p}
      & &
      {E'} \ar[ld]^{p'}
      \\
      &
      {B}
      &
    }
  \end{equation*}
commutes. If $g$ is bijective on fibers then it is a
homeomorphism.
\end{lemma}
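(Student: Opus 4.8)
The plan is to show that $g$ is simultaneously a continuous open bijection, from which the conclusion follows at once. I would organize the argument around two independent observations: that $g$ is étale (hence open), and that $g$ is globally bijective.

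First I would observe that $g$ is a local homeomorphism. Since $p = p' \circ g$ with both $p$ and $p'$ étale, the commuting triangle exhibits $g$ as a morphism in the category of étale maps over the common base~$B$, and by the standard fact recalled in Section~\ref{sec:boolean-spaces-etale-maps} (see~\cite[II.6]{maclane92:_sheav_geomet_logic}) such a morphism is automatically étale, even though $g$ was only assumed continuous. In particular $g$ is an open map, because every étale map is open. This step does all the real work: it delivers openness of $g$ with no separate hands-on computation.

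Next I would check that $g$ is a bijection. The commutativity $p' \circ g = p$ forces $g$ to carry each fiber $E_x = p^{-1}(x)$ into the fiber $E'_x = (p')^{-1}(x)$. Since the families $\set{E_x}_{x \in B}$ and $\set{E'_x}_{x \in B}$ partition $E$ and $E'$ respectively, and $g$ restricts to a bijection $E_x \to E'_x$ for every~$x$ by hypothesis, the map $g$ is a bijection of the underlying sets. Finally, a continuous open bijection is a homeomorphism, its set-theoretic inverse being continuous precisely because $g$ is open.

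I do not expect a genuine obstacle here. The only points needing care are the appeal to the fact that a map of étale maps over~$B$ is itself étale (which is what converts fiberwise niceness into openness), and the routine bookkeeping by which fiberwise bijectivity assembles into a global bijection. It is worth noting that the properness of~$g$, while part of the standing hypotheses on morphisms of skew Boolean spaces, plays no role in this particular argument; the Boolean-space structure (Hausdorffness, zero-dimensionality) is likewise not used, so the lemma is really a statement about étale maps over a fixed base.
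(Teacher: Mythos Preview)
Your proof is correct and takes a genuinely different route from the paper's. The paper argues that $g$ is a \emph{closed} map: using properness of~$g$ and Hausdorffness of~$E'$, the restriction $\restricted{g}{g^{-1}(K)} : g^{-1}(K) \to K$ is closed for every compact $K \subseteq E'$, and then local compactness of~$E'$ (hence compact generation) upgrades this to global closedness. Your argument instead shows that $g$ is \emph{open} by invoking the fact, already cited in Section~\ref{sec:boolean-spaces-etale-maps}, that any morphism of étale maps over a fixed base is itself étale. Your route is both shorter and strictly more general: as you correctly observe, it needs neither properness of~$g$ nor any of the Boolean-space hypotheses (Hausdorffness, local compactness, zero-dimensionality), so the lemma holds for arbitrary surjective étale maps $p, p'$ over any base~$B$. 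The paper's approach, by contrast, makes essential use of the ambient topological assumptions and of properness, which is perhaps why properness is listed among the standing hypotheses even though your argument shows it is not needed here.
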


\begin{proof}
  It is obvious that $g$ is a bijection, so we only need to check that it is a
  closed map. If $K \subseteq E'$ is compact then the restriction
  $\restricted{g}{g^{-1}(K)} : g^{-1}(K) \to K$ is a closed map because it maps
  from the compact space $g^{-1}(K)$ to the Hausdorff space $E'$. Therefore, if
  $F \subseteq E$ is closed then $g(F) \cap K = \restricted{g}{g^{-1}(K)}(F \cap
  g^{-1}(K))$ is closed in $K$ for every compact $K \subseteq E'$. Because $E'$
  is locally compact, it is compactly generated and we may conclude that $g(F)$
  is closed.
\end{proof}

\begin{lemma}
  \label{lemma:bijective-on-fibers-pullback}
Suppose $p : E \to B$ and $p ': E' \to B'$ are skew Boolean
spaces and $g :
  E \to E'$ is a proper continuous map. A commutative square
  \begin{equation*}
    \xymatrix{
      {E} \ar[r]^{g} \ar[d]_{p}
      &
      {E'} \ar[d]^{p'}
      \\
      {B} \ar[r]_{h}
      &
      {B'}
    }
  \end{equation*}
  is a pullback if, and only if, $g$ is bijective on fibers.
\end{lemma}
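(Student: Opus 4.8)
The plan is to reduce everything to Lemma~\ref{lemma:bijective-on-fibers-homeo} by comparing $E$ with the topological pullback of $p'$ along $h$. Write $P = B \times_{B'} E' = \set{(x,y) \in B \times E' \such h(x) = p'(y)}$ with the two projections $\pi_1 : P \to B$ and $\pi_2 : P \to E'$, and let $k = \pair{p, g} : E \to P$ be the comparison map $k(z) = (p(z), g(z))$. By the usual characterization of limits in topological spaces, the square is a pullback exactly when $k$ is a homeomorphism, so the lemma amounts to the equivalence ``$k$ is a homeomorphism if, and only if, $g$ is bijective on fibers''.

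First I would check that $(P, \pi_1)$ is itself a skew Boolean space over the base~$B$, so that Lemma~\ref{lemma:bijective-on-fibers-homeo} becomes applicable to $k$. Indeed $\pi_1$ is the pullback of the étale map $p'$ along $h$, hence étale; it is surjective because $p'$ is (given $x \in B$, surjectivity of $p'$ yields $y$ with $p'(y) = h(x)$, whence $(x,y) \in \pi_1^{-1}(x)$); and $P$ is a Boolean space, being a subspace of the Hausdorff space $B \times E'$ whose local compactness and zero-dimensionality are lifted from~$B$ along the étale map $\pi_1$. Note also that $\pi_1 \circ k = p$ and $\pi_2 \circ k = g$, and that $\pi_2$ restricts to a bijection $\pi_1^{-1}(x) \to E'_{h(x)}$ for each $x \in B$.

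For the converse direction, assume $g$ is bijective on fibers. The comparison $k$ is continuous and commutes with the projections to~$B$, so it only remains to see that it is proper and bijective on fibers, after which Lemma~\ref{lemma:bijective-on-fibers-homeo} delivers the homeomorphism. Bijectivity on fibers is immediate: under the identification $\pi_2 : \pi_1^{-1}(x) \to E'_{h(x)}$ the restriction $\restricted{k}{E_x}$ becomes $\restricted{g}{E_x}$, which is a bijection by hypothesis. The one genuinely topological point is properness of~$k$, which I would argue as follows: if $L \subseteq P$ is compact then it is closed (as $P$ is Hausdorff), so $k^{-1}(L)$ is closed in~$E$; moreover $k^{-1}(L) \subseteq g^{-1}(\pi_2(L))$, and the latter is compact because $\pi_2(L)$ is compact and $g$ is proper; a closed subset of a compact set is compact, so $k^{-1}(L)$ is compact.

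The forward direction is then easy: if the square is a pullback, $k$ is a homeomorphism, and for each $x \in B$ the restriction $\restricted{g}{E_x} = \pi_2 \circ \restricted{k}{E_x}$ is the composite of the homeomorphism $\restricted{k}{E_x} : E_x \to \pi_1^{-1}(x)$ with the bijection $\pi_2 : \pi_1^{-1}(x) \to E'_{h(x)}$, hence a bijection. I expect the main obstacle to be the bookkeeping that $(P, \pi_1)$ qualifies as a skew Boolean space together with the properness of the comparison map~$k$; both are routine, but they are exactly the hypotheses needed to invoke Lemma~\ref{lemma:bijective-on-fibers-homeo} rather than reprove the homeomorphism claim by hand.
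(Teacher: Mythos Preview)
Your proof is correct and follows essentially the same approach as the paper: form the pullback $P$, show the comparison map $E \to P$ is proper and bijective on fibers, and invoke Lemma~\ref{lemma:bijective-on-fibers-homeo}. You are somewhat more explicit than the paper in verifying that $(P,\pi_1)$ is a skew Boolean space and in spelling out the forward direction, but the argument is the same.
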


\begin{proof}
  It is easy to check that $g$ is bijective on fibers if the square is a
  pullback. Conversely, suppose $g$ is bijective on fibers. We form the pullback
  of $h$ and $p'$ and obtain a factorization $e$, as in the diagram
  \begin{equation*}
    \xymatrix{
      {E} \ar[r]^{e} \ar[rd]_{p} \ar@/^2em/[rr]^{g} &
      {P} \ar[r]^{q} \ar[d] \pbcorner &
      {E'} \ar[d]^{p'} \\
      & {B} \ar[r]_{h} & {B'}
    }
  \end{equation*}
  The map $e$ is proper because $g$ is proper. Indeed, if $K \subseteq P$ is
  compact then $e^{-1}(K)$ is a closed subset of the compact subset
  $g^{-1}(q(K))$, therefore it is compact. Furthermore, $e$ is bijective on
  fibers because $g$ and $q$ are. By Lemma~\ref{lemma:bijective-on-fibers-homeo}
  the map $e$ is a homeomorphism, therefore the outer square is a pullback.
\end{proof}

Consider a morphism of Boolean spaces, with inclusion of domains displayed explicitly:
\begin{equation}
  \label{eq:morphism-decomposition}
  \xymatrix@C+1em{
    {E} \ar[d]_{p}   &
    {\dom{g}} \ar[r] \ar@{->}[l] \ar[d] \ar[r]^(0.6){g} &
    {E'} \ar[d]^{p'} \\
    {B} & {\dom{h}} \ar@{->}[l] \ar[r]_(0.6){h} & {B'}
  }
\end{equation}
The left square need not be a morphism in our category because inclusions of
open subsets need not be proper. If we turn them around they become
\emph{partial identities} with open domains of definitions, and we do get a
decomposition
\begin{equation}
  \label{eq:morphism-decomposition-other-direction}
  \xymatrix@C+1em{
   {E} \ar[d]_{p}  \ar@^{->}[r] &
    {\dom{g}}  \ar[d] \ar[r]^(0.6){g} &
    {E'} \ar[d]^{p'} \\
    {B} \ar@^{->}[r] & {\dom{h}}  \ar[r]_(0.6){h} & {B'}
  }
\end{equation}
in which both squares are morphisms between skew Boolean spaces. Now by
Lemma~\ref{lemma:bijective-on-fibers-pullback} the condition that~$g$ is
bijective on fibers is equivalent to the right square being a pullback.
Therefore, every morphism can be decomposed into a partial identity (with open
domain of definition) and a pullback. What does the
decomposition~\eqref{eq:morphism-decomposition-other-direction} correspond to on
the algebraic side of duality? In order to answer the question, we need to study
a certain kind of ideals in skew algebras.

A \emph{$\leq$-ideal} of a skew algebra $A$ is a subset $I \subseteq A$ which is
closed under finite joins and the natural partial order~$\leq$. In particular,
$I$ is nonempty as it contains the empty join~$0$. A $\leq$-ideal may
equivalently be described as a subalgebra that is closed under the natural
partial order because in a skew algebra we always have $x \land y \leq y \lor
x$. The following Lemma gives an explicit description of the $\leq$-ideal
generated by a given subset, akin to how sets generate ideals in rings.

\begin{lemma}\label{lemma:leq-ideals}
  The $\leq$-ideal $\leqideal{S}$ generated by a subset $S \subseteq A$ is
  formed as the closure by finite joins of the downard closure of~$S$ with
  respect to the natural partial order:
  \begin{equation*}
    \leqideal{S} = \set{
      x_1 \lor \cdots \lor x_n \such
      \forall i \leq n \,.\,
      \exists y_i \in S \,.\, x_i \leq y_i
      }.
  \end{equation*}
\end{lemma}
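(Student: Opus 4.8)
Write $T$ for the set on the right-hand side of the displayed equation. The plan is to prove $T = \leqideal{S}$ by showing that $T$ is the smallest $\leq$-ideal containing~$S$. Three things are needed, two of them routine. First, $S \subseteq T$ and $0 \in T$: each $s \in S$ satisfies $s \leq s$, so it occurs as a one-term join ($n = 1$), and $0$ is the empty join ($n = 0$). Second, $T$ is contained in every $\leq$-ideal $I$ with $S \subseteq I$: downward closure of $I$ gives $x_i \in I$ from $x_i \leq y_i \in S$, and join-closure of $I$ then gives $x_1 \lor \cdots \lor x_n \in I$; taking $I = \leqideal{S}$ yields $T \subseteq \leqideal{S}$. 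Third, and this is where the work lies, $T$ must itself be a $\leq$-ideal; granting this, $\leqideal{S} \subseteq T$ and we are done. Closure of $T$ under finite joins is immediate, since concatenating the defining representations of two elements of $T$ exhibits their join in the required form.

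It remains to prove that $T$ is downward closed, which is the crux. So suppose $a = x_1 \lor \cdots \lor x_n \in T$ with $x_i \leq y_i \in S$, and let $b \leq a$. The idea is to restrict each generator to the domain of~$b$. Working in the representation of skew algebras by partial functions (Leech; the operations are those of Theorem~\ref{theorem:partial-functions-band}), the natural partial order is inclusion of graphs, relative complement is difference of domains, and every operation---in particular~$\lor$---commutes with restriction to a fixed subset of~$X$. Viewing $a$, $b$, and the $x_i$ as partial functions, $b \leq a$ means $b = \restricted{a}{\dom{b}}$. Hence
\begin{equation*}
  b = \restricted{a}{\dom{b}} = \restricted{(x_1 \lor \cdots \lor x_n)}{\dom{b}} = \restricted{x_1}{\dom{b}} \lor \cdots \lor \restricted{x_n}{\dom{b}},
\end{equation*}
using that $\lor$ commutes with restriction. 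Each term satisfies $\restricted{x_i}{\dom{b}} \leq x_i \leq y_i \in S$, so this is a representation witnessing $b \in T$, as required.

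The point deserving care, which I regard as the main obstacle, is the interaction of restriction with~$\lor$. One may avoid the representation and argue purely equationally by introducing the operation $\rho(c) = c \setminus (c \setminus b)$, which is ``restriction to the domain of~$b$''. Two facts are then needed: that $\rho(a) = b$ whenever $b \leq a$, and that $\rho$ preserves~$\lor$. The first is clean: from $b \leq a$ one gets $a \land b \land a = b$, so in the Boolean algebra $a \land A \land a = \set{x \such x \leq a}$ the element $a \setminus b$ is the complement of~$b$, whence $\rho(a) = a \setminus (a \setminus b) = b$ by double complementation. The second, $\rho(x \lor y) = \rho(x) \lor \rho(y)$, is a finite identity in the language of skew algebras and can be verified by the ``by parts'' method of Theorem~\ref{theorem:partial-functions-band}; this computation is the only laborious step, and it is exactly what the partial-function representation renders transparent.
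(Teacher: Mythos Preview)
Your proof is correct, but the downward-closure step takes a different route from the paper's. The paper argues purely equationally: from $x \leq a = x_1 \lor \cdots \lor x_n$ one has $x = a \land x \land a$, which expands by meet-distributivity into the double join $\bigvee_{i,j} x_i \land x \land x_j$; the cross terms with $i \neq j$ are then absorbed using that $(A, {\lor})$ is a \emph{regular} band (it satisfies $a \lor b \lor a \lor c \lor a = a \lor b \lor c \lor a$), leaving $x = (x_1 \land x \land x_1) \lor \cdots \lor (x_n \land x \land x_n)$ with each summand $\leq x_i \leq y_i$. Since your $\rho(c) = c \setminus (c \setminus b)$ equals $c \land b \land c$ (double complement in the Boolean algebra $c \land A \land c$), the two arguments in fact produce the \emph{same} decomposition; they differ only in how $\rho(\bigvee_i x_i) = \bigvee_i \rho(x_i)$ is justified. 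You read it off from the partial-function representation together with the observation, proved inside Theorem~\ref{theorem:partial-functions-band}, that restriction to a fixed subset commutes with~$\lor$; this is legitimate here because the duality of Section~\ref{sec:duality-skew}, which supplies the representation, is already in hand by the time this lemma appears. The paper's route is more self-contained---it needs no representation theorem---and isolates band regularity of~$\lor$ as the algebraic mechanism, whereas yours makes the ``restrict each $x_i$ to the domain of~$b$'' picture explicit and transparent.
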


\begin{proof}
  Because any ideal that contains $S$ also contains $\leqideal{S}$ we only have
  to check that $\leqideal{S}$ is a $\leq$-ideal. The set $\leqideal{S}$ is
  obviously closed under joins. To see that it is closed under the natural
  partial order, let $x \leq x_1 \lor \cdots \lor x_n$ where $x_i \leq y_i$ and
  $y_i \in S$. Then
  \begin{multline*}
    x = (x_1 \lor \cdots \lor x_n) \land x \land (x_1 \lor \cdots \lor x_n) = \\
    (x_1 \land x \land x_1) \lor \cdots \lor (x_n \land x \land x_n),
  \end{multline*}
  where we canceled all terms of the form $x_i \land x \land x_j$ with $i\neq j$, by
  the usual argument in the skew lattice theory that amounts to the fact that
  $(A, {\lor})$ is \emph{regular} as a band, i.e., it satisfies the identity $a \lor
  b \lor a \lor c \lor a = a \lor b \lor c \lor a$. Now, $x_i \land x \land x_i \leq
  x_i \leq y_i $ for all $i \leq n$ and thus $x \in \leqideal{S}$.
\end{proof}

\noindent
Note that in the previous lemma the order of operations matters. If we first
close under joins and then perform the downward closure we need not get a
$\leq$-ideal because the resulting set need not be closed under joins.

We say that a subset $S \subseteq A$ of a skew algebra $A$ is
\emph{$\leq$-cofinal} when $\leqideal{S} = A$. A homomorphism is
$\leq$-cofinal when its image is $\leq$-cofinal. If $A$ is commutative,
$S \subseteq A$ is $\leq$-cofinal precisely when it is cofinal in the usual
sense: for every $x \in A$ there is $y \in S$ such that $x \leq y$. 

Every homomorphism $f : A \to A'$ of skew algebras may be decomposed into a
$\leq$-cofinal morphism and an inclusion of a $\leq$-ideal
\begin{equation*}
  \xymatrix{
    {A} \ar[rr]^{f} \ar[rd]_{f}
    & &
    {A'}
    \\
    &
    {\leqideal{\im{f}}}
    \ar[ur]_{i}
    &
  }
\end{equation*}
The following two propositions show that the decomposition is dual to the
decomposition~\eqref{eq:morphism-decomposition-other-direction} on the
topological side of duality.

\begin{proposition}
\label{prop:partial_identities-vs-leq-ideals}
  Partial identities with open domains of definition on the topological side are
  dual to inclusions of $\leq$-ideals on the algebraic side.
\end{proposition}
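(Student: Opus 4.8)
The plan is to lean on the equivalence already established in Theorem~\ref{th:right-handed-duality}, so that it suffices to analyse a single right-handed skew algebra~$A$ together with its dual skew Boolean space $\FunS(A) = (q_A : \Sk{A} \to \St{A})$. By Proposition~\ref{prop:baseM-properties} the map $a \mapsto \baseM{a}$ identifies~$A$ with the copen sections of $q_A$ in such a way that the natural partial order becomes subset inclusion; consequently a $\leq$-ideal of~$A$ is exactly a family of copen sections that is downward closed under~$\subseteq$ and closed under the join~$\lor$. I would set up the correspondence by assigning to a $\leq$-ideal $J \subseteq A$ the open subspace $E_J = \bigcup_{a \in J} \baseM{a} \subseteq \Sk{A}$ together with its image $D_J = q_A(E_J) \subseteq \St{A}$, which is open because $q_A$ is étale, hence open.

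First I would check that $J$ is recovered from $E_J$, in the form: the copen sections of $q_A$ contained in $E_J$ are precisely the $\baseM{a}$ with $a \in J$. One inclusion is immediate. For the other, take a copen section $T \subseteq E_J$; since $T$ is compact and the $\baseM{a}$ ($a \in J$) are open, finitely many of them cover $T$, say $\baseM{a_1}, \ldots, \baseM{a_n}$. Writing $T = \baseM{t}$ by Proposition~\ref{prop:baseM-all-copen-sections} and setting $T_i = T \cap \baseM{a_i} = \baseM{t \cap a_i}$ by Lemma~\ref{lemma:baseM-preserves-cap}, each $t \cap a_i \leq a_i$ lies in $J$ by downward closure, while $T = \bigcup_i T_i = \baseM{\bigvee_i (t \cap a_i)}$ by the union-of-sections identity established inside Proposition~\ref{prop:baseM-all-copen-sections}; closure under joins then gives $T = \baseM{b}$ with $b \in J$. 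It follows that $q_A|_{E_J} : E_J \to D_J$ is an open sub-skew-Boolean-space (open subspaces of Boolean spaces are Boolean, and restrictions of étale maps are étale) whose algebra of copen sections is exactly $J$.

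Next I would identify the dual morphism. The first (left-hand) square of the decomposition~\eqref{eq:morphism-decomposition-other-direction} is the partial identity from $q_A$ to $q_A|_{E_J}$, whose image under $\FunA$ sends a copen section $S \subseteq E_J$ to $g_0^{-1}(S) = S$, i.e.\ it is precisely the inclusion $J \hookrightarrow A$. Since $\FunA$ is half of an equivalence, dualising $J \hookrightarrow A$ indeed produces this partial identity, which proves that inclusions of $\leq$-ideals dualise to partial identities with open domain. For the converse, a partial identity with open domain from $q_A$ to an open sub-skew-Boolean-space $E_0 \to D_0$ has $\FunA$-image equal to the inclusion of $\{S \in A \mid S \subseteq E_0\}$, and this family is trivially downward closed under $\subseteq$ and closed under $\lor$ (since $S \lor R \subseteq S \cup R \subseteq E_0$), hence is a $\leq$-ideal.

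The main obstacle is exactly the recovery of $J$ from $E_J$: the claim that every copen section sitting inside the union $E_J$ already belongs to $J$. The delicate point is that a $\leq$-ideal is closed under the genuine partial order $\leq$ rather than the preorder $\preceq$, so one cannot simply dominate $T$ by a single member of $J$; the compactness argument is needed precisely in order to chop $T$ into pieces $T_i$ sitting \emph{below} members of $J$ and then reassemble $T$ as their join. I expect the remaining verifications — that the two assignments $J \mapsto (E_J \to D_J)$ and $(E_0 \to D_0) \mapsto \{S \mid S \subseteq E_0\}$ are mutually inverse, and that everything is natural — to be routine once this lemma is in hand.
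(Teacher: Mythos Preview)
Your argument is correct and reaches the same correspondence, but by a genuinely different route in the direction from a $\leq$-ideal to a partial identity. The paper does not pass through the equivalence at all there: given a $\leq$-ideal $I \subseteq A$ with inclusion $i : I \hookrightarrow A$, it computes the induced map on spectra pointwise, $\up{i}(\elemsk{P}{a}) = \elemsk{P \cap I}{a}$ for $a \in I$ with $a \notin P$, and observes that $\up{i}$ carries each basic copen $\baseM{a} \subseteq \Sk{A}$ onto $\baseM{a} \subseteq \Sk{I}$. This exhibits $\up{i}$ directly as an open continuous bijection from the open set $\bigcup_{a \in I} \baseM{a}$ onto $\Sk{I}$, hence as (isomorphic to) a partial identity, with no appeal to compactness. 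You instead stay entirely on the side of copen sections and let the equivalence of Theorem~\ref{th:right-handed-duality} do the transport, so the substance of your proof becomes the recovery lemma that every copen section contained in $E_J$ already comes from~$J$; your compactness-and-chop argument for this is sound (and is essentially the same device the paper deploys in the \emph{next} proposition, Proposition~\ref{prop:pullbacks-vs-leq-cofinal}, for $\leq$-cofinal maps). The trade-off: the paper's prime-ideal computation is shorter and self-contained, while your approach is more intrinsic to the section picture and never touches $\theta_P$-classes. Both are valid.
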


\begin{proof}
  Let $p : E \to B$ be a skew Boolean space with open subsets $U \subseteq E$
  and $V \subseteq B$ such that $p(U) = V$. These determine a morphism of skew
  Boolean spaces
  \begin{equation*}
    \xymatrix@+1em{
      {E}
      \ar@^{->}[r]^{i}
      \ar[d]_{p}
      &
      {U}
      \ar[d]^{\restricted{p}{U}}
      \\
      {B}
      \ar@^{->}[r]_{j}
      &
      {V}
    }    
  \end{equation*}
  where $i$ and $j$ are the identity maps restricted to~$U$ and~$V$,
  respectively. It is easy to check that the corresponding homomorphism $f =
  \homo{(i,j)} : A_{\restricted{p}{U}} \to A_p$ is the inclusion of the
  subalgebra $A_{\restricted{p}{U}}$ into $A_p$. Its image is downward closed
  with respect to $\leq$ because the partial order in $A_p$ is inclusion of
  copen sections.

  Conversely, let $I \subseteq A$ be a $\leq$-ideal in $A$ and $i: I \to A$ the
  inclusion. The dual of $i$ is the morphism of Boolean spaces
  \begin{equation*}
    \xymatrix{
      {\Sk{A}} \ar@{-^>}[r]^{\up{i}} \ar[d]_{q_A}
      &
      {\Sk{I}} \ar[d]^{q_I}
      \\
      {\St{A}} \ar@{-^>}[r]_{\down{i}}
      &
      {\St{I}}
    }
  \end{equation*}
  where $\up{i}$ acts as $\up{i}(\elemsk{P}{a}) = \elemsk{P \cap I}{a}$ and is
  defined for any $a \in I$ and a prime ideal $P \subseteq A$ such that $a
  \not\in P$. The domain of $\up{i}$ is open because it is the union of those
  basic copen sets $\baseM{a}$ for which $a \in I$. The map $\up{i}$ is open
  because it takes $\baseM{a} \subseteq \Sk{A}$ to $\baseM{a} \subseteq \Sk{I}$.
  Therefore, $\up{i}$ really is (isomorphic to) a partial identity with an open
  domain of definition. The same fact for $\down{i}$ follows easily.
 \end{proof}

\begin{proposition}
  \label{prop:pullbacks-vs-leq-cofinal}
  Pullbacks on the topological side are dual to $\leq$-cofinal homomorphisms on
  the algebraic side.
\end{proposition}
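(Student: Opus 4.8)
The plan is to unwind what a \emph{pullback} morphism is on the topological side and match it against $\leq$-cofinality. Recall that a morphism of skew Boolean spaces is always bijective on fibers (Lemma~\ref{lemma:dagger-bijective-on-fibers}), so by Lemma~\ref{lemma:bijective-on-fibers-pullback} the square $\FunS(f) = (\up{f}, \down{f})$ is a pullback exactly when the partial maps $\up{f}$ and $\down{f}$ are \emph{total}. Moreover, since $q_{A'}$ is onto and the square~\eqref{eq:sk-up-down-square} commutes, totality of $\up{f}$ already forces totality of $\down{f}$: any $P \in \St{A'}$ is $q_{A'}(\elemsk{P}{t})$ for some point, whose image under $\up{f}$ witnesses that $\down{f}(P)$ is defined. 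Thus the proposition reduces to the single equivalence that $f : A \to A'$ is $\leq$-cofinal if, and only if, $\up{f}$ is total. First I would record the easy half of the forward direction, namely that $\down{f}$ is total: every prime ideal $P$ is a $\preceq$-ideal, hence a $\leq$-ideal, so if $\im{f} \subseteq P$ then $\leqideal{\im{f}} \subseteq P \subsetneq A'$, contradicting $\leqideal{\im{f}} = A'$; hence no prime contains $\im{f}$ and $\down{f}$ is everywhere defined.

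For the remaining forward direction I would argue as follows. Fix a prime ideal $P$ of $A'$ and $t \notin P$; I must produce a single $a \in A$ with $f(a) \thetaeq_P t$, so that $\elemsk{P}{t} = \elemsk{P}{f(a)}$ lies in $\dom{\up{f}}$. Using $\leqideal{\im{f}} = A'$ and the explicit description in Lemma~\ref{lemma:leq-ideals}, write $t = c_1 \lor \cdots \lor c_n$ with $c_i \leq f(a_i)$ for suitable $a_i \in A$. Passing to the quotient by the congruence $\thetaeq_P$ and discarding those $c_i$ lying in $P$ (whose classes are $0$, which is neutral for $\lor$), I may assume every remaining $c_i \notin P$. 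Then $c_i \leq f(a_i)$ together with $c_i \notin P$ gives $c_i \thetaeq_P f(a_i)$ by part~(2) of Lemma~\ref{lemma:s-less-than-t-not-in-P}, so that setting $a = \bigvee_i a_i$ over the surviving indices yields $f(a) = \bigvee_i f(a_i) \thetaeq_P \bigvee_i c_i \thetaeq_P t$. The delicate point is precisely this collapsing of a join of heterogeneous pieces into the image of one element; it works only because joins with $0$ may be dropped and because an element below a fixed $f(a_i)$ that avoids $P$ becomes $\thetaeq_P$-equal to it.

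For the converse I would pass through the sections picture. Naturality gives $\up{f}^{-1}(\baseM{a}) = \baseM{f(a)}$ for all $a \in A$, so if $\up{f}$ is total then the basic copen sets $\baseM{f(a)}$ cover $\Sk{A'}$. Given any $b \in A'$, its copen section $\baseM{b}$ is compact, hence contained in a finite union $\baseM{f(a_1)} \cup \cdots \cup \baseM{f(a_n)}$. Intersecting with $\baseM{b}$ and using Lemma~\ref{lemma:baseM-preserves-cap}, I get $\baseM{b} = \bigcup_i \baseM{b \cap f(a_i)}$; since this is a copen \emph{section}, Proposition~\ref{prop:baseM-all-copen-sections} identifies it with $\baseM{\bigvee_i (b \cap f(a_i))}$, and injectivity of $a \mapsto \baseM{a}$ (Proposition~\ref{prop:baseM-properties}) gives $b = \bigvee_i (b \cap f(a_i))$ with each $b \cap f(a_i) \leq f(a_i)$. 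This exhibits $b \in \leqideal{\im{f}}$, so $f$ is $\leq$-cofinal. Finally I would remark that, combined with Proposition~\ref{prop:partial_identities-vs-leq-ideals}, this matches the decomposition~\eqref{eq:morphism-decomposition-other-direction} of an arbitrary morphism against the algebraic factorization of $f$ through $\leqideal{\im{f}}$. I expect the main obstacle to be the single-element construction in the forward direction; the converse is then a routine compactness argument once totality has been reformulated as a covering condition.
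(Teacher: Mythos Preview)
Your proof is correct and the direction ``$\leq$-cofinal $\Rightarrow$ $\up{f}$ total'' is verbatim the paper's argument: write $t$ as a join of pieces below $f(a_i)$, drop those in $P$, and use Lemma~\ref{lemma:s-less-than-t-not-in-P}(2) to replace each surviving $c_i$ by $f(a_i)$.

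For the other direction the paper takes a slightly different route. Rather than working inside $\Sk{A'}$ and invoking Propositions~\ref{prop:baseM-all-copen-sections} and~\ref{prop:baseM-properties}, the paper argues directly in an arbitrary skew Boolean space: given a copen section $S \subseteq E$, compactness and totality of $g$ give a finite cover $S \subseteq g^{-1}(R_1) \cup \cdots \cup g^{-1}(R_n)$, and the paper then hand-builds pieces $T_1 = g^{-1}(R_1) \cap S$, $T_{i+1} = (g^{-1}(R_{i+1}) \cap S) \setminus T_i$ with pairwise disjoint saturations, so that $S = T_1 \cup \cdots \cup T_n = T_1 \lor \cdots \lor T_n$ and each $T_i \leq f(R_i)$. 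Your version avoids this explicit disjointification by letting the order-isomorphism $a \mapsto \baseM{a}$ absorb the work, at the cost of leaning on the duality already established; the paper's version is more self-contained but a little more hands-on. Both are the same compactness argument underneath.
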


\begin{proof}
  Consider a morphism of skew Boolean spaces that is a pullback, which is
  equivalent to it being defined everywhere:
  \begin{equation*}
    \xymatrix{
      {E} \pbcorner \ar[r]^{g} \ar[d]_{p}
      &
      {E'} \ar[d]^{p'}
      \\
      {B} \ar[r]_{h}
      &
      {B'}      
    }
  \end{equation*}
  To see that the corresponding homomorphism $f = \homo{(g,h)}:A_{p'} \to A_p$
  is $\leq$-cofinal, let $S$ be a copen section in $E$. Because $g$ is
  everywhere defined and $S$ is compact, there exist finitely many copen
  sections $R_1, \ldots, R_n$ in $E'$ such that $S$ is covered by the copen
  sections $g^{-1}(R_1), \ldots, g^{-1}(R_n)$. Let $T_1, \ldots, T_n$ be defined
  by $T_1=g^{-1}(R_1)\cap S$ and $T_{i+1}=(g^{-1}(R_{i+1})\cap S)\setminus T_i$ for $i\geq 1$.
  Then $S=T_1\cup \ldots \cup T_n = T_1\lor \ldots \lor T_n$ where the latter
  equlity follows because distinct $T_i$'s have disjoint saturations.
  Furthermore, each $T_i$ is contained in $g^{-1}(R_i)=f(R_i)$ and thus $S$ lies
  in $\leqideal{\im{f}}$.

  Next, let $f:A\to A'$ be a $\leq$-cofinal homomorphism of right-handed skew
  algebras. We claim that $\up{f}:\Sk{A'} \to \Sk{A}$ is everywhere defined. To
  see this take any $\elemsk{P}{b}\in \Sk{A'}$. Since $f$ is $\leq$-cofinal there
  exist $b_1,\ldots ,b_n\in A'$ and $a_1,\ldots , a_n \in A$ such that $b=b_1\lor
  \ldots \lor b_n$ and $b_i\leq f(a_i)$ for all $i=1,\dots, n$. Since $b\not \in
  P$ there exists $i$ such that $b_i\not \in P$. Thus $b_i\thetaeq _P f(a_i) $
  by Lemma \ref{lemma:s-less-than-t-not-in-P}. Let $\set{i_1, \ldots i_k} $ be
  the set of those indices $i_j$ that satisfy $b_{i_j}\not\in P$. Then
  $b\thetaeq_P f(a_{i_1})\lor \ldots \lor f(a_{i_k})=f(a_{i_1}\lor\dots \lor
  a_{i_k}) $ and thus $\elemsk{P}{b}\in \dom{\up{f}}$.
\end{proof}

Let us call a morphism of skew Boolean spaces
\begin{equation*}
  \xymatrix{
    {E}
    \ar@^{->}[r]^{g}
    \ar[d]_{p}
    &
    {E'}
    \ar[d]^{p'}
    \\
    {B}
    \ar@^{->}[r]_{h}
    &
    {B'}
  }    
\end{equation*}
\emph{total} when both $g$ and $h$ are total, and \emph{semitotal} when~$h$ is
total. A morphism is total precisely when it is a pullback square. As direct consequences of  Propositions~\ref{prop:partial_identities-vs-leq-ideals} and \ref{prop:pullbacks-vs-leq-cofinal} we obtain the dualities stated by the following pair of theorems.

\begin{theorem}
The category of skew Boolean spaces and partial identities with open domains is dual to the category of right-handed skew algebras and inclusions of $\leq$-ideals.
\end{theorem}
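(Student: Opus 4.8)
The plan is to obtain this duality simply by \emph{restricting} the contravariant equivalence $\FunS \colon \op{\CatRA} \to \CatS$ and $\FunA \colon \op{\CatS} \to \CatRA$ of Theorem~\ref{th:right-handed-duality} to appropriate wide subcategories. Let $\CatS'$ be the subcategory of $\CatS$ having all skew Boolean spaces as objects but only partial identities with open domains as morphisms, and let $\CatRA'$ be the subcategory of $\CatRA$ having all right-handed skew algebras as objects and only inclusions of $\leq$-ideals as morphisms. Since the two functors already form an equivalence on the full categories, it suffices to check three things: that $\CatS'$ and $\CatRA'$ really are subcategories, that the functors carry the distinguished morphisms of one into the distinguished morphisms of the other, and that they do so bijectively.

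The second point is precisely Proposition~\ref{prop:partial_identities-vs-leq-ideals}, which shows that $\FunA$ sends a partial identity with open domain to the inclusion of a $\leq$-ideal and, conversely, that $\FunS$ sends an inclusion of a $\leq$-ideal to a morphism isomorphic to a partial identity with open domain. Bijectivity on these morphism classes is then automatic: the full duality is fully faithful, so each of its hom-set bijections restricts to a bijection between the corresponding subclass of partial identities and the corresponding subclass of $\leq$-ideal inclusions. Consequently the two functors restrict to a contravariant equivalence between $\CatS'$ and $\CatRA'$, and essential surjectivity is inherited because the object classes are unchanged. This is the asserted duality.

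It remains to verify that $\CatS'$ and $\CatRA'$ are closed under composition and contain identities. On the topological side, a partial identity with open domain from $p \colon E \to B$ to $\restricted{p}{U} \colon U \to V$ is determined by open sets $U \subseteq E$ and $V = p(U) \subseteq B$; composing it with a partial identity from $\restricted{p}{U}$ to $\restricted{p}{U'}$ determined by an open $U' \subseteq U$ yields the partial identity determined by $U' \subseteq E$, which is again open, while the total identity $U = E$ serves as the identity morphism. On the algebraic side the corresponding statement is the transitivity of the $\leq$-ideal relation: if $I$ is a $\leq$-ideal of $J$ and $J$ is a $\leq$-ideal of $A$, then $I$ is a $\leq$-ideal of $A$, since closure under joins is inherited directly, while for the order one notes that $x \leq y \in I \subseteq J$ forces $x \in J$ because $J$ is a $\leq$-ideal of $A$, and then $x \in I$ because $I$ is a $\leq$-ideal of $J$ and the natural partial order on $J$ is the restriction of the one on $A$; the inclusion $A \hookrightarrow A$ is the identity. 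This closure bookkeeping is the only point requiring genuine, if routine, verification, and it is where I would be most careful; everything else is delivered by the preceding proposition together with the main duality theorem.
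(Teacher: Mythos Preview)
Your proposal is correct and follows the same route as the paper, which simply states that the theorem is a direct consequence of Proposition~\ref{prop:partial_identities-vs-leq-ideals} together with the main duality of Theorem~\ref{th:right-handed-duality}. You have merely spelled out in more detail the restriction-of-equivalence argument and the routine closure of the two subcategories under composition and identities; there is nothing to correct.
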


\begin{theorem}
  The category of skew Boolean spaces and total morphisms is dual to the
  category of right-handed skew algebras and $\leq$-cofinal homomorphisms.
\end{theorem}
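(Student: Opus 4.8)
The plan is to derive this theorem as a corollary of the main right-handed duality (Theorem~\ref{th:right-handed-duality}) by restricting the morphisms on each side, exactly as the surrounding text promises. The functors $\FunS$ and $\FunA$ already form a contravariant equivalence between right-handed skew algebras with all homomorphisms and skew Boolean spaces with all morphisms, with natural isomorphisms $\phi$ and $\psi$. To descend to the stated duality I need only check two things: that the equivalence matches the two distinguished morphism classes, i.e.\ that a homomorphism $f : A \to A'$ is $\leq$-cofinal if and only if $\FunS(f) = (\up{f}, \down{f})$ is a total morphism; and that each distinguished class forms a genuine subcategory. Once these hold, the restrictions of $\FunS$ and $\FunA$, together with the restrictions of $\phi$ and $\psi$, witness the duality.

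For the matching I would first use the observation already recorded that a morphism of skew Boolean spaces is total precisely when it is a pullback square: when $\up{f}$ and $\down{f}$ are both total this is Lemma~\ref{lemma:bijective-on-fibers-pullback}, since the top map of a morphism is always bijective on fibers. The biconditional then reduces almost entirely to Proposition~\ref{prop:pullbacks-vs-leq-cofinal}, whose proof supplies both implications at the level of pullbacks: the homomorphism dual to a pullback morphism is $\leq$-cofinal, and conversely a $\leq$-cofinal $f$ has $\up{f}$ everywhere defined.

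The one gap to fill is that being ``total'' constrains both $\up{f}$ and $\down{f}$, whereas Proposition~\ref{prop:pullbacks-vs-leq-cofinal} phrases the cofinal direction only in terms of $\up{f}$. Here I would invoke the commuting square~\eqref{eq:sk-up-down-square} and the surjectivity of $q_{A'}$: the domain-matching discussion following that square shows that $\down{f}$ is defined at $P$ exactly when $\up{f}$ is defined somewhere on the fiber over $P$, so totality of $\up{f}$ immediately forces totality of $\down{f}$. Thus $f$ being $\leq$-cofinal implies $\FunS(f)$ is total; and conversely, if $\FunS(f)$ is total then it is a pullback, so its dual homomorphism, which the natural isomorphism $\phi$ identifies with $f$, is $\leq$-cofinal. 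This settles the matching.

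Finally, total morphisms obviously contain the identities and are closed under composition (bijectivity on fibers and properness are both stable under composing total maps), so they form a subcategory; by the matching just proved the class of $\leq$-cofinal homomorphisms is correspondingly closed, and one may alternatively confirm this directly from the generator description of $\leq$-ideals in Lemma~\ref{lemma:leq-ideals}. The equivalence of Theorem~\ref{th:right-handed-duality} therefore restricts to the two subcategories and yields the asserted duality. The only real obstacle is the morphism-matching biconditional, and even that is essentially packaged in Proposition~\ref{prop:pullbacks-vs-leq-cofinal}; the remaining work is bookkeeping, which is why the statement is legitimately a corollary.
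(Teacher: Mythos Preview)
Your proposal is correct and follows exactly the approach the paper intends: the theorem is stated as a direct consequence of Proposition~\ref{prop:pullbacks-vs-leq-cofinal} (together with the identification of total morphisms with pullbacks via Lemma~\ref{lemma:bijective-on-fibers-pullback}), and you have simply spelled out the bookkeeping that the paper leaves implicit. Your observation that totality of $\up{f}$ forces totality of $\down{f}$ via the discussion after~\eqref{eq:sk-up-down-square} is exactly the right way to close the small gap.
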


\noindent
The duality for the semitotal morphisms is similar to duality for total
morphisms, except that we have to replace $\leq$-cofinality with
$\preceq$-cofinality: a subset $S \subseteq A$ of a skew algebra is
\emph{$\preceq$-cofinal} when the ideal generated by~$S$ equals~$A$. Such an
ideal may be computed either as the $\lor$-closure of $\preceq$-closure of~$S$,
or as the $\preceq$-closure of $\lor$-closure of~$S$.

A homomorphism $f : A \to A'$ is $\preceq$-cofinal when its image is
$\preceq$-cofinal. Because the image is closed under finite joins,
$\preceq$-cofinality of $f$ amounts to the following condition: for every $y \in
A'$ there is $x \in A$ such that $y \preceq f(x)$. Cofinal homomorphisms between
commutative algebras are also known as \emph{proper} homomorphisms, but we avoid
this terminology because we already use the term proper on the topological side.
In a slightly different setup this kind of duality is considered by Ganna
Kudryavtseva~\cite{kudryavtseva11:_stone_boolean}.

\begin{theorem}
  The category of skew Boolean spaces and semitotal morphisms is dual to the
  category of right-handed skew algebras and $\preceq$-cofinal homomorphisms.
\end{theorem}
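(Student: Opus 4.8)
The plan is to leverage the duality of Theorem~\ref{th:right-handed-duality} between right-handed skew algebras and skew Boolean spaces, whose objects are precisely the objects of the two categories in the statement; only the morphisms are restricted. It therefore suffices to show that the duality functors $\FunS$ and $\FunA$ restrict correctly to the chosen morphism classes, which I would package as the biconditional: a homomorphism $f : A \to A'$ of right-handed skew algebras is $\preceq$-cofinal if and only if its dual $\FunS(f) = (\up{f}, \down{f})$ is semitotal, i.e. the base component $\down{f} : \St{A'} \parto \St{A}$ is total. The argument runs in close parallel to Proposition~\ref{prop:pullbacks-vs-leq-cofinal}, with $\preceq$ replacing $\leq$ and the base map replacing the top map. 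I would verify the two directions separately; each one shows that one of $\FunS$, $\FunA$ carries the restricted morphisms into the restricted morphisms, and since the natural isomorphisms of the duality are built from homeomorphisms (hence total maps), the equivalence restricts to the subcategories.

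For the algebraic-to-topological direction, I would show that $\preceq$-cofinality of $f$ forces $\down{f}$ to be total. Recall that $\down{f}$ is defined at a prime ideal $P \subseteq A'$ exactly when $f^{-1}(P) \neq A$, that is, when some $a \in A$ satisfies $f(a) \notin P$. Given any prime ideal $P$, non-triviality yields an element $y \notin P$, and $\preceq$-cofinality yields $x \in A$ with $y \preceq f(x)$. Since ideals are lower with respect to $\preceq$, from $y \notin P$ and $y \preceq f(x)$ we conclude $f(x) \notin P$, so $x \notin f^{-1}(P)$ and $f^{-1}(P) \neq A$. As $P$ was arbitrary, $\down{f}$ is total, hence $\FunS(f)$ is semitotal.

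For the topological-to-algebraic direction, I would begin with a semitotal morphism $(g,h) : p \to p'$ of skew Boolean spaces $p : E \to B$ and $p' : E' \to B'$, so that $h : B \to B'$ is total, and prove that $f = \homo{(g,h)} : A_{p'} \to A_p$ is $\preceq$-cofinal. Given a copen section $S$ in $E$, the reflection chain of Proposition~\ref{prop:p-reflection} together with Lemma~\ref{lemma:morphism-reflection} reduces the required relation $S \preceq g^{-1}(R)$ to the inclusion $p(S) \subseteq p(g^{-1}(R)) = h^{-1}(p'(R))$, equivalently $h(p(S)) \subseteq p'(R)$. Since $p(S)$ is copen, hence compact, and $h$ is continuous and total, $h(p(S))$ is a compact subset of $B'$; I would cover it by finitely many basic copens, take their union to obtain a single copen $V \supseteq h(p(S))$, and use Proposition~\ref{prop:copens-have-sections} to obtain a copen section $R$ with $p'(R) = V$. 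Then $S \preceq g^{-1}(R) = f(R)$, which witnesses $\preceq$-cofinality of $f$.

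I expect the main obstacle to lie in this last direction, precisely in passing from the compact image $h(p(S))$ to a single copen section $R$ lying over it: one must first assemble a finite copen cover into one copen set and then invoke the existence of sections over arbitrary copens, a step that simultaneously uses totality of $h$ and the local compactness and zero-dimensionality of $B'$. Everything else is bookkeeping: the semitotal case follows the same template as the total case treated in Proposition~\ref{prop:pullbacks-vs-leq-cofinal}, and the objects together with the remaining morphism data are inherited unchanged from Theorem~\ref{th:right-handed-duality}.
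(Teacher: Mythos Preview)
Your proposal is correct and follows essentially the same approach as the paper. The only cosmetic difference is that in the topological-to-algebraic direction the paper argues entirely at the level of the lattice reflections $B^{*}$ and $(B')^{*}$ (showing $h^{*}$ is cofinal, which is equivalent to $\preceq$-cofinality of $\homo{(g,h)}$ via Proposition~\ref{prop:p-reflection}), whereas you explicitly lift the copen $V \subseteq B'$ to a copen section $R$ using Proposition~\ref{prop:copens-have-sections}; the content is the same.
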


\begin{proof}
  Assume that $h$ in diagram~\eqref{eq:morphism-decomposition-other-direction}
  is total. For every $S\in B^*$ there exist $R_1, \ldots, R_n\in (B')^*$ such
  that $S$ (as a copen set in $B$) is covered by the copen set $h^{-1}(R_1\cup
  \ldots \cup R_n)$. Hence $S\leq h^*(R_1\lor \ldots \lor R_n)$.

  To prove the converse, assume that $f:A\to A'$ is a homomorphism of
  right-handed skew algebras that is cofinal with respect to $\preceq$. We claim
  that $\down{f}:\St{A'} \to \St{A}$ is everywhere defined. To see this take any
  $P\in \St{A'}$. Since $f$ is $\preceq$-cofinal it follows that $\im{f}$ is not
  contained in any proper ideal of $A'$. Hence there exists $a\in A$ such that
  $f(a)\not\in P$, which implies $f^{-1}(P)\neq A$. Thus $f^{-1}(P)$ is a prime
  ideal in $A$ and so $\down{f}$ is defined at $P$, namely
  $\down{f}(P)=f^{-1}(P)$.
\end{proof}

To get still more variations of duality we consider notions of
saturation. We say that a homomorphism $f : A \to A'$ of skew algebras
is \emph{$\DD$-saturated} when its image is saturated with respect to
Green's relation~$\DD$.

\begin{lemma}
  \label{lemma:D-two-saturations}
  A homomorphism between skew algebras is $\DD$-saturated if, and only if, it
  maps each $\DD$-class surjectively onto a $\DD$-class.
\end{lemma}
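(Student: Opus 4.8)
The plan is to reduce the statement to a fiberwise surjectivity condition and then settle the two implications, the second of which needs a small trick. Since $\DD$ is a congruence whose quotient $\D{A}$ is a lattice, every homomorphism $f$ satisfies $f(\dclass{a}) \subseteq \dclass{f(a)}$, and $\dclass{f(a)}$ is the only $\DD$-class meeting $f(\dclass{a})$. Hence $f$ maps each $\DD$-class onto a $\DD$-class precisely when $f(\dclass{a}) = \dclass{f(a)}$ for every $a \in A$, and this is the formulation I would verify. Throughout I would use only that each $\DD$-class is a rectangular band, so that inside it the two sandwich identities $u \land v \land u = u$ and $u \lor v \lor u = u$ hold, together with absorption in the lattice $\D{A}$; no further lemmas are needed.

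The implication from ``onto a $\DD$-class'' to $\DD$-saturated is immediate: if $f(\dclass{a}) = \dclass{f(a)}$ for all $a$, then $\im{f} = \bigcup_{a \in A} f(\dclass{a}) = \bigcup_{a \in A} \dclass{f(a)}$ is a union of $\DD$-classes, which is exactly what it means for $\im{f}$ to be $\DD$-saturated.

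For the converse I would fix $a$ and an arbitrary $y' \Drel f(a)$, and produce $c \in \dclass{a}$ with $f(c) = y'$. Saturation supplies some $b \in A$ with $f(b) = y'$, but $b$ need not lie in any prescribed $\DD$-class, and this is the crux of the difficulty: the obvious candidates $b \land a \land b$ and $a \lor b \lor a$ each get one of the two requirements right and the other wrong, because any sandwich formed inside a single $\DD$-class collapses to its outer argument. I would resolve this in two steps. First replace $b$ by $b' = b \lor a \lor b$; then $\dclass{b'} = \dclass{a} \vee \dclass{b} \geq \dclass{a}$, while $f(b') = f(b) \lor f(a) \lor f(b) = y'$, the last equality holding because $y'$ and $f(a)$ lie in the one $\DD$-class $\dclass{f(a)}$, where $u \lor v \lor u = u$. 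Second, set $c = b' \land a \land b'$. Absorption gives $\dclass{c} = (\dclass{a} \vee \dclass{b}) \wedge \dclass{a} = \dclass{a}$, so $c \in \dclass{a}$, while the dual rectangular-band identity gives $f(c) = y' \land f(a) \land y' = y'$. Thus $y' \in f(\dclass{a})$, and since $y'$ was an arbitrary member of $\dclass{f(a)}$ we conclude $f(\dclass{a}) = \dclass{f(a)}$, a $\DD$-class. The one point I would double-check is that passing from $b$ to $b'$ genuinely leaves the value fixed at $y'$ yet raises the $\DD$-class to at least $\dclass{a}$, since the whole argument hinges on decoupling the ``$\DD$-level'' of the witness from its value under $f$.
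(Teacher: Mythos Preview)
Your proof is correct and follows essentially the same approach as the paper: both exploit absorption in the lattice $\D{A}$ together with the rectangular-band identities $u \land v \land u = u$ and $u \lor v \lor u = u$ inside a $\DD$-class to build a double-sandwich element lying in $\dclass{a}$ with the prescribed image. The only difference is cosmetic: you use the dual formula $c = (b \lor a \lor b) \land a \land (b \lor a \lor b)$ (raise the $\DD$-level, then lower it), whereas the paper takes $a'' = (a' \land a \land a') \lor a \lor (a' \land a \land a')$ (lower, then raise).
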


\begin{proof}
  The ``if'' part is obvious. For the ``only if'' part, let $f : A \to A'$ be a
  $\DD$-saturated homomorphism. Suppose $a \in A$ and $b \in A'$ such
  that $f(a) \Drel b$. Because $f$ is $\DD$-saturated there is $a' \in
  A$ such that $f(a') = b$. Consider
  \begin{equation*}
    a'' = (a' \land a \land a') \lor a \lor  (a' \land a \land a').
  \end{equation*}
  It is obvious that $a'' \Drel a$. Because $f(a) \Drel b$
  we get $b \land f(a) \land b = b$ and $b \lor f(a) \lor b = b$,
  which implies 
  \begin{equation*}
    f(a'') = 
    (b \land f(a) \land b) \lor f(a) \lor (b \land f(a) \land b) =
    b \lor f(a) \lor b = b,
  \end{equation*}
  as desired.
\end{proof}

There is a notion of saturation of morphisms on the topological side, too. Say
that $(g,h) : p \to p'$ from $p : E \to B$ to $p' : E' \to B'$ is
\emph{saturated} when the domain of $g$ is saturated with respect to $p$, i.e.,
$\dom{g} = p^{-1}(\dom{h})$. This is equivalent to the left square
in~\eqref{eq:morphism-decomposition} being a pullback. In a different context
such morphisms were called partial pullbacks by Erik Palmgren and Steve Vickers~\cite{Palmgren}.

The two notions of saturation are \emph{not} dual to each other, so
they yield two more dualities.

\begin{theorem}
  The category of right-handed skew algebras and $\DD$-saturated homomorphisms
  is dual to the category of skew Boolean spaces and those morphisms
  \begin{equation*}
    \xymatrix{
      {E}
      \ar@^{->}[r]^{g}
      \ar[d]_{p}
      &
      {E'}
      \ar[d]^{p'}
      \\
      {B}
      \ar@^{->}[r]_{h}
      &
      {B'}
    }    
  \end{equation*}
  that satisfy the following lifting property: for every copen $U \subseteq B'$ and
  copen section $S$ of $p$ above $h^{-1}(U)$ there exists a copen section $R$ of
  $p'$ above $U$ such that $S = g^{-1}(R)$.
\end{theorem}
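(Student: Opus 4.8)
The plan is to leverage the duality of Theorem~\ref{th:right-handed-duality} and reduce everything to a single equivalence: a homomorphism $f : A \to A'$ of right-handed skew algebras is $\DD$-saturated if, and only if, its dual morphism $\FunS(f) = (\up{f}, \down{f})$, which runs from $q_{A'} : \Sk{A'} \to \St{A'}$ to $q_A : \Sk{A} \to \St{A}$, satisfies the stated lifting property. Since $\DD$-saturated homomorphisms are easily seen to be closed under composition and to contain the identities (by the characterisation of Lemma~\ref{lemma:D-two-saturations}), this one equivalence transports the subcategory structure across the duality and yields the theorem. I would begin by recording the relevant dictionary: copen subsets of $\St{A}$ are exactly the sets $\baseN{d}$ for $d \in \D{A}$, the copen sections of $q_{A'}$ and $q_A$ are the sets $\baseM{a'}$ and $\baseM{a}$ by Proposition~\ref{prop:baseM-all-copen-sections}, and one has $\down{f}^{-1}(\baseN{d}) = \baseN{\D{f}(d)}$ and $\up{f}^{-1}(\baseM{a}) = \baseM{f(a)}$.

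The first substantive step is to rewrite the lifting property purely algebraically. In the notation of the theorem $B' = \St{A}$, so an arbitrary copen $U \subseteq B'$ is $U = \baseN{d}$ for a unique $d \in \D{A}$, and then $h^{-1}(U) = \baseN{\D{f}(d)}$. A copen section $S$ of $p = q_{A'}$ above $h^{-1}(U)$ is some $\baseM{a'}$ whose support $q_{A'}(\baseM{a'}) = \baseN{\dclass{a'}}$ equals $\baseN{\D{f}(d)}$, i.e.\ $\dclass{a'} = \D{f}(d)$; a copen section $R$ of $p' = q_A$ above $U$ is some $\baseM{a}$ with $\dclass{a} = d$; and $g^{-1}(R) = \up{f}^{-1}(\baseM{a}) = \baseM{f(a)}$, so by injectivity of $a \mapsto \baseM{a}$ (Proposition~\ref{prop:baseM-properties}) the equation $S = g^{-1}(R)$ holds precisely when $a' = f(a)$. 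Thus the lifting property is equivalent to the purely algebraic statement: for every $d \in \D{A}$ and every $a' \in A'$ with $\dclass{a'} = \D{f}(d)$ there exists $a \in A$ with $\dclass{a} = d$ and $f(a) = a'$.

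The second step is to match this statement with $\DD$-saturation via Lemma~\ref{lemma:D-two-saturations}. For the forward implication, assuming $f$ is $\DD$-saturated, I would choose any representative $a_0$ of the class $d$; then $f(a_0) \Drel a'$ because $\dclass{f(a_0)} = \D{f}(d) = \dclass{a'}$, and the lemma supplies $a$ in the class $d$ with $f(a) = a'$. For the converse, given $a \in A$ and $b \in A'$ with $f(a) \Drel b$, I would apply the algebraic statement with $d = \dclass{a}$ and $a' = b$, which is legitimate since $\dclass{b} = \dclass{f(a)} = \D{f}(d)$, to obtain $a_1$ with $\dclass{a_1} = \dclass{a}$ and $f(a_1) = b$; as the inclusion $f(\dclass{a}) \subseteq \dclass{f(a)}$ is automatic, this shows that $f$ maps each $\DD$-class onto a $\DD$-class, hence is $\DD$-saturated by Lemma~\ref{lemma:D-two-saturations}.

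The main obstacle is not any single computation but the careful bookkeeping of the translation in the first step: one must be confident that copen sections lying above a prescribed copen support correspond exactly to algebra elements of a prescribed $\DD$-class, and that an arbitrary, not merely basic, copen $U$ is captured by a single $d \in \D{A}$. Once this dictionary is in place the two implications are short, and closure under composition on both sides follows formally, so no separate verification of functoriality of the restricted duality is needed.
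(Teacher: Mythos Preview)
Your proposal is correct and follows essentially the same route as the paper: both arguments use the dictionary $U \leftrightarrow \baseN{d}$, $S \leftrightarrow \baseM{a'}$, $R \leftrightarrow \baseM{a}$, $g^{-1}(R) = \baseM{f(a)}$ to reduce the lifting property to the statement that $f$ maps $\DD$-classes onto $\DD$-classes, and then invoke Lemma~\ref{lemma:D-two-saturations}. The only difference is organisational: the paper proves the two implications directly (from $f$ $\DD$-saturated to $\FunS(f)$ having the lifting property, and from an arbitrary $(g,h)$ with the lifting property to $\FunA(g,h)$ being $\DD$-saturated), whereas you prove a single biconditional for morphisms of the form $\FunS(f)$ and then appeal to the ambient duality; for this last step to be complete you should note that the lifting property is invariant under isomorphism in $\CatS$ (so that an arbitrary $(g,h)$ inherits it from $\FunS(\FunA(g,h))$ via $\psi$), but this is immediate.
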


\begin{proof}
  Let $f : A \to A'$ be a $\DD$-saturated homomorphism. To show that $(\up{f},
  \down{f}) : A_{p'} \to A_p$ has the desired property, consider a copen
  $\baseN{a} \subseteq \St{A}$ and a copen section $\baseM{b} \in \Sk{A'}$ above
  $\baseN{f(a)}$. Because $\baseN{f(a)} = \baseN{b}$, we have $f(a) \Drel b$ and
  so by Lemma~\ref{lemma:D-two-saturations} there exists $a' \in A$ such that
  $a' \Drel a$ and $b = f(a')$. So $\baseN{a} = \baseN{a'}$ and
  $\up{f}^{-1}(\baseM{a'}) = \baseM{f(a')} = \baseM{b}$.

  Conversely, suppose we have a morphism between skew Boolean spaces, as in the
  statement of the theorem. We need to show that the corresponding homomorphism
  $f = \homo{(g,h)} : A_{p'} \to A_p$ is $\DD$-saturated. Let $S \subseteq E'$ be a
  copen section of $p'$ above $V \subseteq B'$, and let $R \subseteq E$ be a
  copen section of $p$ above $h^{-1}(V)$, i.e., $R \Drel g^{-1}(S) = f(S)$. By the
  property of our morphism there exists a copen section $S' \subseteq E'$ above
  $V$ such that $f(S') = g^{-1}(S') = R$, as required.
\end{proof}

\begin{theorem}
  The category of skew Boolean spaces and saturated morphisms is dual to the
  category of right-handed skew algebras and those homomorphisms $f : A \to A'$
  for which $\leqideal{\im{f}}$ is closed under the natural preorder.
\end{theorem}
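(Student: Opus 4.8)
The plan is to lean on the right-handed duality already established in Theorem~\ref{th:right-handed-duality}, under which a morphism $(g,h) : p \to p'$ of skew Boolean spaces corresponds to the homomorphism $f = \homo{(g,h)} : A_{p'} \to A_p$ acting by $f(S) = g^{-1}(S)$ on copen sections. Since that duality already matches the two classes of objects, everything reduces to a single biconditional at the level of morphisms: $(g,h)$ is saturated, i.e.\ $\dom{g} = p^{-1}(\dom{h})$, if and only if $\leqideal{\im{f}}$ is closed under~$\preceq$. Establishing this biconditional yields both halves of the claimed duality at once, with naturality inherited from the ambient duality.

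The first key step is to compute $\leqideal{\im{f}}$ explicitly as the copen sections of $p$ lying inside $\dom{g}$, namely $\leqideal{\im{f}} = \set{S \in A_p \such S \subseteq \dom{g}}$. The right-to-left inclusion reuses the disjointification argument from the proof of Proposition~\ref{prop:pullbacks-vs-leq-cofinal}: a copen section $S \subseteq \dom{g}$ is covered by finitely many sets $g^{-1}(R_i) = f(R_i)$ and can therefore be written as a join $T_1 \lor \cdots \lor T_n$ of sections with $T_i \leq f(R_i)$; the reverse inclusion holds because $\im{f} \subseteq \set{S \in A_p \such S \subseteq \dom{g}}$ and the latter is visibly a $\leq$-ideal. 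Alongside this I would record two facts: by Proposition~\ref{prop:p-reflection}, for copen sections $S \preceq R$ is equivalent to $p(S) \subseteq p(R)$; and $p(\dom{g}) = \dom{h}$, which follows because $g$ is bijective on fibers while $p$ and $p'$ are surjective, so every fiber of $p$ over a point of $\dom{h}$ meets $\dom{g}$.

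With these in hand the forward direction is a short set-theoretic calculation: if $\dom{g} = p^{-1}(\dom{h})$, then $R \in \leqideal{\im{f}}$ and $S \preceq R$ give $p(S) \subseteq p(R) \subseteq \dom{h}$, whence $S \subseteq p^{-1}(\dom{h}) = \dom{g}$. The converse is where I expect the real work to sit. Assuming $\leqideal{\im{f}}$ is $\preceq$-closed, and using that always $\dom{g} \subseteq p^{-1}(\dom{h})$, I would take an arbitrary $x \in p^{-1}(\dom{h})$ and show $x \in \dom{g}$. Since $p(x) \in \dom{h} = p(\dom{g})$ there is $y \in \dom{g}$ above $p(x)$; choosing a copen section $R \subseteq \dom{g}$ through $y$, and then, using that $p$ is étale and $p(R)$ is an open neighbourhood of $p(x)$ together with the copen base of the Boolean space $B$, a copen section $S$ through $x$ with $p(S) \subseteq p(R)$, yields $S \preceq R$ with $R \in \leqideal{\im{f}}$, so $S \in \leqideal{\im{f}}$ and hence $x \in S \subseteq \dom{g}$. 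The delicate point, and the main obstacle, is precisely this manufacture of a suitable copen section through~$x$ whose image is trapped inside $p(R)$: it is what converts the purely order-theoretic hypothesis on $\leqideal{\im{f}}$ back into the topological saturation condition, and it relies essentially on local homeomorphy and zero-dimensionality of the base.
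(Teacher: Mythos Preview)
Your argument is correct and is organized differently from the paper's. The paper does not first isolate the identity $\leqideal{\im{f}} = \set{S \in A_p \such S \subseteq \dom{g}}$; for the forward direction it argues directly, and for the converse it switches to the algebraic side, starting from a homomorphism $f : A \to A'$ and showing that $\up{f}$ is defined at every $\elemsk{P}{a'}$ with $P \in \dom{\down{f}}$ by invoking an auxiliary lemma (Lemma~\ref{lemma:elemsk-cuts-accross}) which produces, for $a, b \notin P$, an element $c \notin P$ with $a \thetaeq_P c$ and $c \Drel b$. Your route stays on the topological side throughout: once the description of $\leqideal{\im{f}}$ as the copen sections contained in $\dom{g}$ is in hand, both directions are short point-set arguments, and you avoid Lemma~\ref{lemma:elemsk-cuts-accross} entirely. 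The cost is that you must justify the characterization of $\leqideal{\im{f}}$ (which you do, via the compactness-and-disjointification argument of Proposition~\ref{prop:pullbacks-vs-leq-cofinal}) and the equality $p(\dom{g}) = \dom{h}$; both are straightforward. The paper's approach, by contrast, makes more visible the interplay between prime ideals and $\thetaeq_P$-classes, at the price of an extra lemma.
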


\begin{proof}
  Consider a saturated morphism
  \begin{equation*}
    \xymatrix{
      {E}
      \ar@^{->}[r]^{g}
      \ar[d]_{p}
      &
      {E'}
      \ar[d]^{p'}
      \\
      {B}
      \ar@^{->}[r]_{h}
      &
      {B'}
    }    
  \end{equation*}
  and let $f = \homo{(g,h)} : A_{p'} \to A_p$ be the corresponding homomorphism.
  Let $R$ be a section of $p'$ above a copen $V \subseteq B'$ and $S$ a copen
  section of~$p$ above a copen~$U \subseteq h^{-1}(V)$. Because $\dom{g}$ is
  saturated it contains~$S$. For every $x \in S$ there is a copen section~$T$
  of~$p'$ above~$V$ which passes through~$g(x)$, hence~$x$ is covered by the
  copen section $g^{-1}(T)$. Because~$S$ is compact, there are finitely many
  sections $T_1, \ldots, T_n$ of $p'$ above $V$ such that each $x \in S$ is
  covered by some $g^{-1}(T_i)$. If we let $S_i = S \cap g^{-1}(T_i)$ then
  $S = S_1 \lor \cdots \lor S_n$ and $S_i \subseteq g^{-1}(T_i)$. We have proved
  that $\leqideal{\im{f}}$ is closed under~$\preceq$.

  Conversely, consider a homomorphism $f : A \to A'$ such that
  $\leqideal{\im{f}}$ is closed under~$\preceq$. Suppose $P \subseteq A'$ is a
  prime ideal such that $f^{-1}(P)$ is also a prime ideal. Given any $a' \in A'
  - P$, we need to show that $\up{f}$ is defined at $\elemsk{P}{a'}$. There
  exists $a \in A$ such that $f(a) \not\in P$. By
  Lemma~\ref{lemma:elemsk-cuts-accross}, proved below, there is $b \in A' - P$
  such that $a' \thetaeq_P b$ and $b \Drel f(a)$. It follows that $b\preceq
  f(a)$ and thus $b\in \leqideal{\im{f}}$ by the assumption. Hence there exits
  $a_1,\dots, a_n\in A$ and $b_1,\dots, b_n\in A'$ such that $b=b_1\lor \dots
  \lor b_n$ and $b_i\leq f(a_i)$ for all $i$. Because $b = b_1 \lor \dots \lor
  b_n$ and $b \not\in P$ it follows that not all $b_i$ can lie in $P$. Say $b_1,
  \dots, b_j\not \in P$ while $b_{j+1},\dots, b_n\in P$. Then $b \thetaeq_P b_1
  \lor \dots \lor b_j \thetaeq_P f(a_1 \lor \dots \lor a_j)$ by Lemma
  \ref{lemma:s-less-than-t-not-in-P} and so $\up{f}(\elemsk{P}{a'}) =
  \up{f}(\elemsk{P}{b}) = \elemsk{\down{f}(P)}{a_1 \lor \cdots \lor a_j}$ is
  defined.
\end{proof}

\begin{lemma}
  \label{lemma:elemsk-cuts-accross}
  Let $A$ be a skew algebra and $P \subseteq A$ a prime ideal. For every $a, b
  \in A - P$ there is $c \in A - P$ such that $a \thetaeq_P c$ and $c \Drel b$.
\end{lemma}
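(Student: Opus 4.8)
The plan is to realize the desired $c$ inside the $\thetaeq_P$-class of $a$ by a two-step ``sandwich'': first descend from $a$ to a representative whose $\DD$-class sits below $\dclass{b}$, then ascend to one whose $\DD$-class is exactly $\dclass{b}$. Throughout, the workhorse is part~(2) of Lemma~\ref{lemma:s-less-than-t-not-in-P}: whenever $x \leq y$ and $x \notin P$ we have $x \thetaeq_P y$. Since $\thetaeq_P$ is transitive, it suffices to exhibit an element $d \notin P$ lying below both $a$ and the eventual target $c$ in the natural partial order.

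First I would set $d = a \land b \land a$. The skew-lattice identities give $d \leq a$, and passing to the lattice reflection yields $\dclass{d} = \dclass{a} \land \dclass{b}$, so in particular $\dclass{d} \leq \dclass{b}$. The key point is that $d \notin P$: because $P$ is prime and $a, b \notin P$ we have $a \land b \notin P$, and since $\dclass{a \land b \land a} = \dclass{a \land b}$ while prime ideals are $\DD$-saturated (being lower for $\preceq$), it follows that $d \notin P$ as well. Lemma~\ref{lemma:s-less-than-t-not-in-P} then gives $d \thetaeq_P a$.

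Next I would set
\[
  c = d \lor b \lor d = (a \land b \land a) \lor b \lor (a \land b \land a).
\]
A routine computation with $\lor$ shows $d \leq c$, and in the lattice reflection $\dclass{c} = \dclass{d} \lor \dclass{b} = \dclass{b}$ by absorption, using $\dclass{d} \leq \dclass{b}$; hence $c \Drel b$. Since $d \notin P$ and $d \leq c$, Lemma~\ref{lemma:s-less-than-t-not-in-P} gives $d \thetaeq_P c$, and in particular $c \notin P$. Combining, $a \thetaeq_P d \thetaeq_P c$, so this $c$ has all the required properties.

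The only point demanding care is the verification that $d \notin P$, which is precisely where primeness of $P$ (equivalently, its $\DD$-saturation) is used, and without which Lemma~\ref{lemma:s-less-than-t-not-in-P} could not be invoked at either step. Everything else reduces to the two identities $\dclass{a \land b \land a} = \dclass{a} \land \dclass{b}$ and $\dclass{d \lor b \lor d} = \dclass{b}$ in the lattice $\D{A}$, together with the order relations $d \leq a$ and $d \leq c$, all of which are standard skew-lattice bookkeeping.
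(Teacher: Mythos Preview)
Your proof is correct and follows essentially the same route as the paper: both take $c = (a \land b \land a) \lor b \lor (a \land b \land a)$, use primeness to get $a \land b \land a \notin P$, and then apply Lemma~\ref{lemma:s-less-than-t-not-in-P} twice (once along $a \land b \land a \leq a$, once along $a \land b \land a \leq c$). Your write-up is simply more detailed, spelling out the intermediate $d$ and the $\DD$-class bookkeeping that the paper leaves implicit.
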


\begin{proof}
  If we take $c=(a\land b\land a)\lor b \lor (a\land b\land a)$ then $c\Drel b$
  obviously holds. Next, $a \not\in P$ and $b\not\in P$ together imply $a \land
  b \land a \not\in P$, and $a\land b\land a \thetaeq_P a$ follows by Lemma
  \ref{lemma:s-less-than-t-not-in-P}. Finally, $a\land b\land a\leq c$ and $a\land b\land a \thetaeq_P
  c$ follows, again by Lemma \ref{lemma:s-less-than-t-not-in-P}.

\end{proof}

%%% Local Variables: 
%%% mode: latex
%%% TeX-master: "sba"
%%% End: 

%\input{application.tex}
\section{Lattice sections of skew algebras}
\label{sec:lattice-sections}

A \emph{lattice section} of a skew lattice $A$ is a section $\ell :
\D{A} \to A$ of the canonical projection $q : A \to \D{A}$ which
preserves $0$, $\land$ and $\lor$. We construct a right-handed skew
Boolean algebra without a lattice section. This answers negatively
the open question whether every skew lattice has a section.

\begin{proposition}
  A right-handed skew algebra has a lattice section if, and only if,
  the corresponding skew Boolean space has a global section.
\end{proposition}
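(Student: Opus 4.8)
The plan is to pass through the duality of Theorem~\ref{th:right-handed-duality}, which lets us replace $A$ by the copen sections $A_p$ of its skew Boolean space $p : E \to B$, where $E = \Sk{A}$ and $B = \St{A}$. By Proposition~\ref{prop:p-reflection} the lattice reflection $\D{A}$ is (isomorphic to) the Boolean algebra $B^{*}$ of copen subsets of~$B$, with reflection map $S \mapsto p(S)$. Thus a lattice section of~$A$ is the same thing as a section $\ell : B^{*} \to A_p$ of $S \mapsto p(S)$ that preserves $0$, $\land$, and $\lor$; concretely, $\ell$ assigns to each copen $U \subseteq B$ a copen section $\ell(U)$ of~$p$ above~$U$. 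The whole proof then consists of converting such an~$\ell$ into a global section $s : B \to E$ of~$p$ and back.

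For the easy direction, suppose $p$ has a global (continuous, total) section $s : B \to E$. I would set $\ell(U) = s(U)$ for each copen $U \subseteq B$. Since $s$ is a section, the image of a copen is a copen section above~$U$, so $\ell(U) \in A_p$ and $p(\ell(U)) = U$. Preservation of the operations reduces to routine set-theoretic identities, using $\sigma_p(s(U)) = p^{-1}(U)$ and injectivity of~$s$: one checks $s(U \cap V) = p^{-1}(U) \cap s(V) = \sigma_p(\ell(U)) \cap \ell(V) = \ell(U) \land \ell(V)$ and $s(U \cup V) = s(U) \cup s(V - U) = \ell(U) \lor \ell(V)$, while $\ell(\emptyset) = \emptyset$ is immediate.

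For the converse, suppose we are given a lattice section $\ell : B^{*} \to A_p$. The key observation is that meet-preservation yields the compatibility relation
\begin{equation*}
  U \subseteq V \implies \ell(U) = \sigma_p(\ell(U)) \cap \ell(V) = p^{-1}(U) \cap \ell(V),
\end{equation*}
obtained by applying $\ell$ to the identity $U = U \cap V$. Because the copens form a base of the Boolean space~$B$, every $x \in B$ lies in some copen~$U$, and $\ell(U)$, being a section above~$U$, contains a unique point over~$x$; I define $s(x)$ to be that point. The compatibility relation, applied to $U \cap U'$ whenever $x \in U \cap U'$, shows this is independent of the chosen~$U$, so $s : B \to E$ is a well-defined total map with $p \circ s = \id{B}$. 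Restricted to any copen~$U$, the map $s$ coincides with $(\restricted{p}{\ell(U)})^{-1}$, which is continuous; since these copens cover~$B$, the map~$s$ is continuous, hence a global section.

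The main obstacle, and the only genuinely topological step, is the gluing in the converse direction: one must verify that the locally defined sections $(\restricted{p}{\ell(U)})^{-1}$ agree on overlaps and patch to a single continuous total section. This rests entirely on the compatibility relation $\ell(U) = p^{-1}(U) \cap \ell(V)$ extracted from $\land$-preservation, together with the fact that copens form a base for~$B$; everything else is bookkeeping with the definitions of the operations on~$A_p$.
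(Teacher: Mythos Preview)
Your proof is correct and follows the same strategy as the paper: in one direction you set $\ell(U)=s(U)$ and verify the operations, in the other you extract compatibility of the local sections $\ell(U)$ from $\land$-preservation and glue them into a global section. The only cosmetic difference is that you first pass through the duality to work with $A_p$ and $B^{*}$, whereas the paper stays on the algebraic side and expresses the same gluing via the basic sets $\baseM{\ell(d)}$ and Proposition~\ref{prop:baseM-properties}; both observe that only $0$- and $\land$-preservation are needed for the lattice-to-global direction.
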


\begin{proof}
  Let $A$ be a skew algebra with a lattice section $\ell : \D{A} \to A$. For
  every $d \in \D{A}$ the étale map $q_A : \Sk{A} \to \St{A}$ has a local
  section $s_d : \baseN{d} \to E$ which maps $\baseN{d}$ to $\baseM{\ell(d)}$.
  We can glue the local sections into a global one as long as they are
  compatible. To see that this is the case, take any $d, e \in \D{A}$ and
  compute by Lemma~\ref{prop:baseM-properties}
  \begin{multline*}
    \baseM{\ell(d \land e)} =
    \baseM{\ell(e \land d \land e)} =
    \baseM{\ell(e) \land \ell(d) \land \ell(e)} = \\
    q_A^{-1}(\baseN{d}) \cap \baseM{\ell(e)} =
    q_A^{-1}(\baseN{d} \cap \baseN{e}) \cap \baseM{\ell(e)},
  \end{multline*}
  and similarly
  \begin{equation*}
    \baseM{\ell(e \land d)} =
    \baseM{\ell(d \land e\land d)} =
    q_A^{-1}(\baseN{d} \cap \baseN{e}) \cap \baseM{\ell(d)}.
  \end{equation*}
  Therefore, $s_d$ and $s_e$ restricted to $\baseN{d} \cap \baseN{e}$
  are both equal to $s_{d \land e}$.

  Conversely, suppose $p : E \to B$ is a skew Boolean space with a global
  section $s : B \to E$. Then the map $V \mapsto s(V)$ is a lattice section for
  $A_p$, since $s(\emptyset) = \emptyset$,
  \begin{multline*}
    s(U) \land s(V) =
    p^{-1}(p(s(U))) \cap s(V) = \\
    p^{-1}(U) \cap s(V) =
    s(U) \cap s(V) = s(U \cap V),
  \end{multline*}
  and similarly $s(U) \lor s(V) = s(U \cup V)$.
\end{proof}

\noindent
The preceeding construction of a global section from a lattice section $\ell :
\D{A} \to A$ used only preservation of $0$ and $\land$ by $\ell$. Thus we proved
in passing that a right-handed skew algebra has a lattice section if, and only
if, it has a section which preserves $0$ and $\land$.

We next construct a skew Boolean space which does not have a global section.
Consequently, the corresponding skew algebra does not have a lattice section. To
find a counter-example we first look at a sufficient condition for existence of
global sections.

\begin{proposition}
  A skew Boolean space has a global section if the base space is a countable
  union of compact open sets. Dually, a right-handed skew algebra has a lattice
  section if it contains a cofinal countable chain for to the natural preorder.
\end{proposition}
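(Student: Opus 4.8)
The plan is to handle the two statements separately, obtaining the algebraic one from the topological one through the duality. For the topological direction, suppose the base of a skew Boolean space $p : E \to B$ is a countable union of copens, $B = \bigcup_{n \in \NN} U_n$. First I would replace this cover by a \emph{disjoint} one, setting $V_n = U_n \setminus (U_1 \cup \cdots \cup U_{n-1})$. Each $V_n$ is again copen, since the copens of a Boolean space are closed under finite unions and differences: a difference $U \setminus V$ is open and is a closed, hence compact, subset of the compact set $U$. The $V_n$ are pairwise disjoint and still cover $B$. Next I would invoke Proposition~\ref{prop:copens-have-sections} — whose proof, as noted there, uses only surjectivity and zero-dimensionality and so applies to any skew Boolean space — to obtain a section $s_n : V_n \to E$ on each $V_n$. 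Because the domains are disjoint, the $s_n$ glue with no compatibility condition into a single function $s : B \to E$ satisfying $p \circ s = \id{B}$, and since $\set{V_n}$ is an open cover on each member of which $s$ restricts to the continuous $s_n$, the map $s$ is continuous. This $s$ is the desired global section.

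For the algebraic statement I would reduce to the topological one. By the preceding proposition a right-handed skew algebra $A$ has a lattice section if and only if its dual skew Boolean space $q_A : \Sk{A} \to \St{A}$ has a global section, so by the result just established it suffices to show that a cofinal countable $\preceq$-chain in $A$ forces $\St{A}$ to be a countable union of copens. Given such a chain $a_1 \preceq a_2 \preceq \cdots$, put $d_n = \dclass{a_n} \in \D{A}$. Since $\preceq$ has $\DD$ as its poset reflection, $x \preceq y$ is equivalent to $\dclass{x} \leq \dclass{y}$, so the $d_n$ form a cofinal chain in $\D{A}$: every $e \in \D{A}$ satisfies $e \leq d_n$ for some $n$. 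Recalling that the copens of $\St{A} = \St{\D{A}}$ are exactly the basic copens $\baseN{e}$ with $e \in \D{A}$, and that $e \leq d_n$ yields $\baseN{e} \subseteq \baseN{d_n}$, cofinality gives $\St{A} = \bigcup_{e} \baseN{e} = \bigcup_n \baseN{d_n}$, a countable union of copens. The topological statement then supplies a global section, and the duality a lattice section.

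The one step requiring genuine care is the continuity of the glued section, and this is precisely why I disjointify the cover before gluing: sections on overlapping copens need not agree, so a naive gluing over an arbitrary copen cover could fail to be well defined, whereas over a disjoint open cover both well-definedness and continuity are automatic. Everything else is routine bookkeeping with the generalized Boolean algebra of copens and the projection $A \to \D{A}$.
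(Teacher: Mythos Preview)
Your proof is correct and follows essentially the same approach as the paper: disjointify a countable copen cover of the base, choose a section over each piece, and glue; then reduce the algebraic statement to the topological one by observing that a cofinal $\preceq$-chain in $A$ yields a covering chain of basic copens $\baseN{c_n}$ in $\St{A}$. The only cosmetic difference is that the paper first passes to an increasing chain $C_0 \subseteq C_1 \subseteq \cdots$ and defines the global section to agree with $s_i$ on $C_i - C_{i-1}$, whereas you disjointify immediately; these are the same maneuver, and your explicit appeal to Proposition~\ref{prop:copens-have-sections} and to the preceding proposition makes the dependencies clearer than the paper's more telegraphic version.
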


\begin{proof}
  Suppose the base $B$ of a skew Boolean space $p : E \to B$ of Boolean spaces
  is covered by countably many copen sets. Because a finite union of such sets
  is again copen, there is in fact a countable chain $C_0 \subseteq C_1
  \subseteq \cdots$ of copen sets which cover~$B$. Each $C_i$ has a local
  section $s_i : C_i \to E$. These can be used to form a global section $s : B
  \to E$ which equals $s_i$ on $C_i - C_{i-1}$.

  To transfer the construction to the algebraic side of duality, observe that a
  sequence $c_0, c_1, c_2, \ldots$ in $A$ is a cofinal chain for the natural
  preorder if, and only if, the corresponding sequence of copen sets
  $\baseN{c_0}, \baseN{c_1}, \baseN{c_2}, \ldots$ is a chain that covers
  $\St{A}$.
\end{proof}

Thus we must look for a counter-example whose base space is fairly large. A
simple one is the first uncountable ordinal $\omega_1$ with the interval
topology. Recall that the points of $\omega_1$ are the countable ordinals and
that the interval topology is generated by the open intervals $I_{\alpha, \beta}
= \set{\gamma \in \omega_1 \mid \alpha < \gamma < \beta}$ for $\alpha < \beta <
\omega_1$.
We also need a total space, for which we take the disjoint sum of intervals
\begin{equation*}
  \textstyle
  E = \coprod_{\alpha < \omega_1} [0, \alpha],
\end{equation*}
where each interval $[0, \alpha]$ carries the interval topology. The points of
$E$ are pairs $(\alpha, \beta)$ with $\beta \leq \alpha$ and the basic open sets
are of the form $\set{\alpha} \times I_{\beta, \gamma}$ for $\beta < \gamma \leq
\alpha$. We define the étale map $p : E \to \omega_1$ to be the second
projection, so that the fiber above $\beta$ contains points $(\alpha, \beta)$
with $\beta \leq \alpha < \omega_1$.

A global section $\omega_1 \to E$ of $p$ is a map $\beta \mapsto (s(\beta),
\beta)$ where $s : \omega_1 \to \omega_1$ is a progressive locally constant map.
By progressive we mean that $\beta \leq s(\beta)$ for all $\beta < \omega_1$. To
see that $s$ is locally constant, observe that for any $\beta \in \omega$ the
preimage of the open subset $\set{s(\beta)} \times [0, s(\beta)] \subseteq E$
under the section is open, hence $\beta$ has an open neighborhood which $s$ maps
to $\set{s(\beta)}$.

The map $s$ preserves suprema of monotone sequences because it is locally
constant. Above every $\beta < \omega_1$ there is a fixed point of $s$, namely
the supremum of the monotone sequence
\begin{equation*}
  \beta \leq s(\beta) \leq s(s(\beta)) \leq \cdots
\end{equation*}
By using this fact repeatedly, we obtain a strictly increasing sequence
$\gamma_0 < \gamma_1 < \gamma_2 < \cdots$ of fixed points of~$s$. Their supremum
$\gamma_\infty = \sup_i \gamma_i$ is a fixed point of~$s$, too. Since $s$ is
locally constant there exists $\gamma_i$ such that $s(\gamma_i) = s(\gamma)$,
which yields the contradiction $\gamma_\infty = s(\gamma_\infty) = s(\gamma_i) =
\gamma_i < \gamma_\infty$. We therefore conclude that~$s$ does not exist and $p
: E \to \omega_1$ does not have a global section.

\end{document}